\documentclass[%
a4paper,%
arxiv,%
defaults
]{myclass}

\usepackage[nowarnings]{hopf}

\usepackage{tikz}
\usetikzlibrary{matrix,arrows,shapes}

\newlength{\nodewidth}
\settowidth{\nodewidth}{Fr\"olicher~}

\tikzstyle{format} = [draw, thin, fill=black!20, text width=\nodewidth, text centered, minimum height=31pt, outer sep=2pt]
\tikzstyle{justabove} = [above=3pt, text width=\nodewidth, outer sep=2pt]
\tikzstyle{justbelow} = [below=3pt, text width=\nodewidth, outer sep=2pt]
\tikzstyle{justleft}  = [left=3pt,  minimum height=31pt, outer sep=2pt]
\tikzstyle{justright} = [right=3pt, minimum height=31pt, outer sep=2pt]
\tikzstyle{moreabove} = [below=5pt, style=transparent]
\tikzstyle{morebelow} = [above=5pt, style=transparent]
\tikzstyle{moreleft}  = [right=20pt, style=transparent]
\tikzstyle{moreright} = [left=20pt, style=transparent]
\tikzstyle{diaglabel}     = []

\usepackage[%
bb,%
geom,%
]%
{mymacros}

\usepackage{smthcat}

\newtheorem*{remark*}{Remark}

\mybibliographystyle

\mytitle{Comparative Smootheology}

\myauthor{Andrew Stacey}

\myabstract{%
 We compare various different definitions of ``the category of smooth  objects''.
 The definitions compared are due to Chen, Fr\"olicher, Sikorski, Smith, and Souriau.
 The method of comparison is to construct functors between the categories that enable us to see how the categories relate to each other.
 This produces a diagram of categories with the category of Fr\"olicher spaces sitting at its centre.
 Our method of study involves finding a general context into which these categories can be placed.
 This involves considering categories wherein objects are considered in relation to a certain collection of standard test objects.
 This therefore applies beyond the question of categories of smooth spaces.
}

\begin{document}

\mymaketitle

\section{Introduction}
\label{sec:intro}

The purpose of this paper is to compare various definitions of categories of smooth objects.
The definitions compared are due to Chen \cite{kc3}, Fr\"olicher \cite{af}, Sikorski \cite{rs3} (see also Mostow \cite{mm2}), Smith \cite{js2}, and Souriau \cite{js}.
Each has the same underlying principle: we know what it means for a map to be smooth between certain subsets of Euclidean spaces and so in general we declare a function to be smooth if whenever, we examine it using these subsets, it is smooth.
This is a rather vague statement\emhyp{}what do we mean by ``examine''?\emhyp{}and the various definitions can all be seen as different ways of making this precise.

Most of the definitions were introduced because someone wished to extend some aspect of the theory of smooth manifolds to spaces which were not, traditionally, viewed as smooth manifolds.
These definitions, therefore, were motivated by an initial problem and this influenced the choice of category.
A definition was considered suitable if it helped solve the problem.
This has led to a proliferation of possible definitions for ``smooth objects''.
Such comparison as is already in the literature tends to focus on applications, see for example Mostow \cite[\S 4]{mm2}.

With so many different definitions of ``the category of smooth objects'' two questions naturally arise:
\begin{enumerate}
\item Which, if any, truly captures the essence of ``smoothness''?
\item How are the various categories related to each other?
\end{enumerate}
The first is, ultimately, subjective.
The answer depends on what one decides the essence of smoothness to be.
The second can be either subjective or (reasonably) objective.
It is subjective if one considers the question as attempting to order the definitions by one being better than the other.
It is (reasonably) objective if one looks for natural relationships between the categories and considers the nature of those relationships.
The qualifier ``reasonably'' is there because the word ``natural'' in the previous sentence is being used in its traditional English meaning of ``not contrived'' rather than its mathematical meaning.

This paper attempts to answer the second question.
Whilst the author has a definite opinion as to the correct answer to the first (Fr\"olicher spaces), he can also see the value in separating subjective opinion from objective mathematics.

\medskip

To look for relationships between categories means finding functors between them.
Thus our first goal is to find functors between the various categories.
Initially we want to consider uncontrived functors.
As each of the categories is an extension of that of smooth manifolds, we start by looking for functors that preserve this subcategory within each.

Once we have found such functors, the next step is to classify these functors (as before, I am using the word ``classify'' with its traditional English meaning).
The ideal situation here is to be able to say that one category is a reflective or co\hyp{}reflective subcategory of another (using the given functors as the inclusion and (co\hyp{})reflector).
We therefore look for adjunctions between the functors and for embeddings.

Our answer at this stage can be summarised in figure~\ref{fig:adjoints}.

\begin{figure}
\label{fig:adjoints}
\begin{centre}
\begin{tikzpicture}
    \node[format]    (c73)  at (-6,4) {Chen 1973};
    \node[format]    (c75a) at (-6,2) {Chen 1975a};
    \node[format]    (c75b) at (-3,2) {Chen 1975b};
    \node[format]    (c77)  at (-6,0) {Chen 1977};
    \node[format]    (so)   at (-3,0) {Souriau};
    \node[format]    (fr)   at (0,0)  {Fr\"olicher};
    \node[format]    (sm)   at (0,2)  {Smith};
    \node[format]    (si)   at (2.85,2)  {Sikorski};
    \node[justabove] (fru)  at (fr)   {};
    \node[justabove] (sou)  at (so)   {};
    \node[justabove] (smu)  at (sm)   {};
    \node[justabove] (siu)  at (si)   {};
    \node[justabove] (c75bu) at (c75b) {};
    \node[justabove] (c75au) at (c75a) {};
    \node[justabove] (c77u) at (c77)  {};
    \node[justbelow] (sod)  at (so)   {};
    \node[justbelow] (c77d) at (c77)  {};
    \node[justright] (frr)  at (fr)   {};
    \node[justright] (smr)  at (sm)   {};
    \node[justleft]  (frl)  at (fr)   {};
    \node[justleft]  (sml)  at (sm)   {};
    \node[justright] (c73r) at (c73)  {};
    \node[justright] (c75ar) at (c75a) {};
\path[->>]
          (so)    edge (fr)
          (sm)    edge (fr)
          (c77)   edge (so)
          (c75a)  edge (c75b)
          (c75b)  edge (sm)
          (si)    edge (sm)
          (c75ar)  edge (c73r);
\path[right hook->]
          (c73)   edge (c75a)
          (smu)   edge (siu)
          (frl)   edge (sml)
          (frr)   edge (smr);
\path[left hook->]
          (smu)   edge (c75bu)
          (c75bu) edge (c75au)
          (fru)   edge (sou)
          (sou)   edge (c77u)
          (sod)   edge (c77d);

    \node at (intersection cs: first line={(fru)--(so)}, second line={(sou)--(fr)}) {\(\top\)};
    \node at (intersection cs: first line={(sou)--(c77)}, second line={(c77u)--(so)}) {\(\top\)};
    \node at (intersection cs: first line={(c77d)--(so)}, second line={(sod)--(c77)}) {\(\top\)};
    \node at (intersection cs: first line={(c75bu)--(c75a)}, second line={(c75b)--(c75au)}) {\(\top\)};
    \node at (intersection cs: first line={(c75bu)--(sm)}, second line={(c75b)--(smu)}) {\(\top\)};
    \node at (intersection cs: first line={(smu)--(si)}, second line={(sm)--(siu)}) {\(\bot\)};
    \node at (intersection cs: first line={(c73r)--(c75a)}, second line={(c73)--(c75ar)}) {\(\vdash\)};
\end{tikzpicture}
\caption{The relationships between the categories}
\end{centre}
\end{figure}
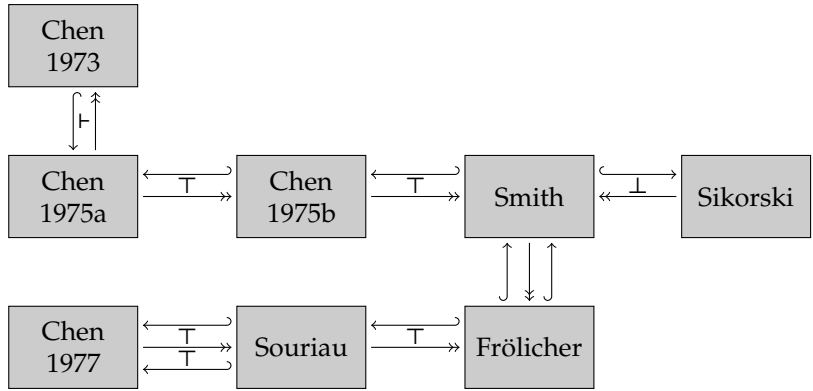

Once we have examined all the uncontrived functors, we consider the question of whether or not there are any contrived functors.
To limit this question slightly, we consider the question as to whether or not any of the various categories of smooth spaces are equivalent, where we allow contrived functors; that is to say, we allow any functors irrespective of how they behave on manifolds.

Our answer to this is satisfying: there are none.

\medskip

Let us now give a tour of this study, ensuring that we point out the main attractions.

In Section~\ref{sec:smoothcats} we recall the definitions of five of the various categories of smooth spaces that have appeared in the literature.
All of these proposals have the same basic shape and in Section~\ref{sec:recipe} we extract that shape and put it into a general setting.

To explain this basic shape we need to consider how the various categories of smooth spaces might have been devised.
Let us start with the definition of a smooth manifold.

\begin{defn}
A \emph{smooth manifold} consists of a topological manifold together with a smooth structure.
A \emph{smooth structure} consists of a maximal smooth atlas.
A \emph{smooth atlas} is a family of charts, which are continuous maps to or from open subsets of Euclidean spaces (satisfying certain other conditions).
\end{defn}

We say ``to or from'' because charts, being homeomorphisms, are invertible and it is a matter of taste as to whether the term ``chart'' means the map to the manifold or off it.

Let us compare this with one of the definitions for a category of smooth objects.

\begin{defn}[{Souriau~\cite{js}}]
 A \emph{Souriau space} or \emph{diffeological space} is a pair \((X, \m{D})\) where \(X\) is a set and \(\m{D}\) is a family of maps (also called \emph{plots}) into \(X\) with domains open subsets of Euclidean spaces.
 These have to satisfy the following conditions.
 \begin{enumerate}
 \item
  Every constant map is a plot.
 \item
  If \(\phi \colon U \to X\) is a plot and
   \(\theta \colon U' \to U\)
  is a \cimap between open subsets of Euclidean spaces then \(\phi\theta\) is a plot.
 \item
  If \(\phi \colon U \to X\) is a map which is everywhere locally a plot then it is a plot.
 \end{enumerate}
\end{defn}

By comparing these two definitions, we can see certain common themes.

\begin{enumerate}
\item In both there is an underlying category of objects to which one might wish to give a smooth structure.

\item In both there is a category of test spaces and a smooth structure consists of a family of morphisms to or from these test spaces and the object in question.

\item In both these families are not completely arbitrary.
There is a forcing condition which must be met.
\end{enumerate}

In Section~\ref{sec:recipe} we generalise this.
Starting with an underlying category, \(\ucat\), a test category, \(\tcat\), and a forcing condition we define \uFVtcat, \(\uFVtcat\).
A \uFVtobj consists of a \uobj together with families of \umors to and from the \tobjs with the property that the appropriate forcing condition is satisfied.

At this point it is worth making a comment about the direction of the test morphisms.
In the proposed categories of smooth spaces there is almost always a preferred direction, either to the smooth space or from it.
Only one, that of Fr\"olicher spaces, directly uses both.
However, it is possible to recast all of the definitions to use both without changing the actual category.
This makes the comparision simpler.

\medskip

Having set up out general recipe, we proceed to the search for functors in Section~\ref{sec:functor}.
As we are initially looking for uncontrived functors, we begin by looking at functors that are induced by one of three operations: changing the forcing condition, changing the category of test spaces, and changing the underlying condition.
As we have a specific situation to which we wish to apply this theory, we do not work in full generality but rather seek out that midpoint where there is sufficient generality to make matters clear but not so much that we lose sight of our goal.
In particular, when looking at what happens when we change the underlying category we consider only certain simple changes.

Having set up this general theory, we apply it in Section~\ref{sec:funwild} to the cases at hand.
This is where we produce all the functors used in Figure~\ref{fig:adjoints}.
We also give simple descriptions of the functors so that someone who is not interested in the detailed construction can still work out what the functors actually are.

The various properties of these functors that we are interested in all follow from the general work in Section~\ref{sec:functor}.
However, whilst we can use this general theory to say that a particular functor has a certain property, it cannot be used to say that it doesn't have another property.
Therefore in Sections~\ref{sec:diff} and~\ref{sec:adjoint} we show that all the properties that didn't follow from the work of Section~\ref{sec:functor} do not hold.
Thus the adjunctions indicated in Figure~\ref{fig:adjoints} are all the adjoints that exist that involve any of the functors in that diagram.

\medskip

Section~\ref{sec:equiv} concludes the main part of this paper by considering arbitrary relationships between the categories.
To make this a more specific question, we look for equivalences of categories.
Our strategy here is to look for \emph{invariants} of the categories to enable us to say that there are no such equivalences.

The required invariants are extremely simple.
We look at terminal objects, the two element set, and the real line.
The first two essentially say that any equivalence of these categories must be equivalent to one that preserves the underlying category.
The third, the real line, is the one that characterises the smooth structures.
One highlight of this section is the result that in the category of Fr\"olicher spaces it is possible to categorically find the real line.
That is, purely using categorical tools one can say ``This is \R.''.
I do not know whether or not this is true for the other categories.
Nonetheless, it implies that there are no interesting automorphisms of any of these categories.

\medskip

The final two sections deal with side issues to the main purpose of this paper.
Section~\ref{sec:topology} considers the r\^ole of topology in defining a smooth space.
Several of the definitions use a topological category as the underlying category but others just use \(\xcat\).
Section~\ref{sec:topology} considers how to remove the topology from those that start with it.

Section~\ref{sec:nonset} is concerned with non\hyp{}set\enhyp{}based theories.
In all of the categories considered in this paper the objects have underlying sets.
It is an interesting question as to how to remove this requirement, but the answer is not obvious.
Thus in Section~\ref{sec:nonset} we do no more than raise this question.

\medskip

Because the purpose of this paper is to examine the word ``smooth'', I shall use the term ``\cimap'' when referring to a map between locally convex subsets of Euclidean spaces which is smooth in the standard sense; that is, all definable directional derivatives exist and are continuous on their domain of definition.

As many people defining an extension of smooth manifolds have used the term ``differentiable space'', we adopt the convention that we shall refer to each type of structure by the name of its original author.
We even do this for diffeological spaces although that name is unambiguous and has a nice ring to it.

Finally, I am grateful to the various people a the \(n\)\enhyp{}Category Caf\'e who commented on the preliminary version of this article.
In particular I wish to acknowledge the helpful comments of Bruce Barlett and Urs Schreiber, the latter being the one who started the discussion.
I am especially grateful to Urs Schreiber for suggesting the title.

\section{Categories of Smooth Spaces}
\label{sec:smoothcats}

In this section we shall describe five of the various categories of smooth spaces that have appeared in the literature.

\begin{defn}[{Chen~\cite{kc3}}]
\label{def:chen}
 A \emph{Chen space} is a pair \((X, \m{P})\) where \(X\) is a set and \(\m{P}\) is a family of maps (called \emph{plots}) into \(X\) with domains convex subsets of Euclidean spaces.
 These have to satisfy the following conditions.
 \begin{enumerate}
 \item
  Every constant map is a plot.
 \item
  If \(\phi \colon C \to X\) is a plot and
   \(\theta \colon C' \to C\)
  is a \cimap between convex regions then \(\phi\theta\) is a plot.
 \item
  If \(\phi \colon C \to X\) is a map which is everywhere locally a plot then it is a plot.
 \end{enumerate}

 A morphism of Chen spaces is a map on the underlying sets which takes plots to plots.
\end{defn}

\begin{defn}[{Souriau~\cite{js}}]
 A \emph{Souriau space} is a pair \((X, \m{D})\) where \(X\) is a set and \(\m{D}\) is a family of maps (also called \emph{plots}) into \(X\) with domains open subsets of Euclidean spaces.
 These have to satisfy the following conditions.
 \begin{enumerate}
 \item
  Every constant map is a plot.
 \item
  If \(\phi \colon U \to X\) is a plot and
   \(\theta \colon U' \to U\)
  is a \cimap between open subsets of Euclidean spaces then \(\phi\theta\) is a plot.
 \item
  If \(\phi \colon U \to X\) is a map which is everywhere locally a plot then it is a plot.
 \end{enumerate}

 A morphism of Souriau spaces is a map on the underlying sets which takes plots to plots.
 The family \(\m{D}\) is sometimes called the \emph{diffeology} of the pair \((X, \m{D})\).
\end{defn}

\begin{defn}[{Sikorski~\cite{rs3}}]
 A \emph{Sikorski space} is a triple \((X, \m{T}, \m{F})\) where \(X\) is a set, \(\m{T}\) a topology on \(X\), and \(\m{F}\) is a subalgebra of the algebra of continuous real\hyp{}valued functions on \(X\) satisfying the following conditions.
 \begin{enumerate}
 \item
  Functionals in \(\m{F}\) are \emph{locally detectable}, in that \(f \colon X \to \R\) is in \(\m{F}\) if each point \(x \in X\) has a neighbourhood, say \(V\), for which there is a function \(g \in \m{F}\) with
   \(f \restrict_V = g \restrict_V\).

 \item
  If
   \(f_1, \dotsc, f_k \in \m{F}\)
  and \(g \in \Ci(\R^k, \R)\) then
   \(g(f_1, \dotsc, g_k) \in \m{F}\).
 \end{enumerate}

 A morphism of Sikorski spaces is a map on the underlying sets, \(g \colon X \to Y\), such that \(f g \in \m{F}_X\) for all \(f \in \m{F}_Y\).
\end{defn}

\begin{defn}[{Smith~\cite{js2}}]
A \emph{Smith space} is a triple \((X, \m{T}, \m{F})\) where \(X\) is a set, \(\m{T}\) a topology on \(X\), and \(\m{F}\) a set of continuous real\hyp{}valued functions on \(X\).
The set \(\m{F}\) has to satisfy a certain closure condition.
For an open set \(U \subseteq \R^n\), let \(\m{F}(U)\) denote the set of continuous maps \(\phi \colon U \to X\) with the property that \(f \phi \in \Ci(U, \R)\) for all \(f \in \m{F}\).
The closure condition is that \(\m{F}\) contains all continuous functions \(g \colon X \to \R\) with the property that for all open sets \(U \subseteq \R^n\) (\(n\) arbitrary) and \(\phi \in \m{F}(U)\), \(g \phi \in \Ci(U, \R)\).
\end{defn}

\begin{defn}[{Fr\"olicher~\cite{af}}]
 A \emph{Fr\"olicher space} is a triple \((X, \m{C}, \m{F})\) where \(X\) is a set, \(\m{C}\) is a family of curves in \(X\), i.e.~a subset of \(\map(\R, X)\), and \(\m{F}\) is a family of functionals on \(X\), i.e.~a subset of \(\map(X, \R)\).
 The sets \(\m{C}\) and \(\m{F}\) have to satisfy the following compatibility condition: a curve \(c \colon \R \to X\) is in \(\m{C}\) if and only if \(f c \in \Ci(\R,\R)\) for all functionals \(f \in \m{F}\), and similarly a functional \(f \colon X \to \R\) is in \(\m{F}\) if and only if \(f c \in \Ci(\R, \R)\) for all curves \(c \colon \R \to X\).

 A map of Fr\"olicher spaces is a map \(\phi \colon X \to Y\) on the underlying sets satisfying the following (equivalent) conditions.
 \begin{enumerate}
 \item
   \(\phi c \in \m{C}_Y\) for all \(c \in \m{C}_X\),
 \item
   \(f \phi \in \m{F}_X\) for all \(f \in \m{F}_Y\),
 \item
   \(f \phi c \in \Ci(\R, \R)\)
  for all \(c \in \m{C}_X\) and \(f \in \m{F}_Y\).
 \end{enumerate}
\end{defn}

\begin{remark}
\begin{enumerate}
\item
Chen modified his definition considerably as he worked with it.
The earliest definition seems to be from \cite{kc} and the latest from \cite{kc3}.
In between these two lies \cite{kc5} which includes two further definitions (one, it should be said, is an incorrect recollection of the definition from \cite{kc}).
Although all his definitions are based on the same theme, there is considerable variation from the first to the last.
We shall comment a little on this in the next section.

\item 
A Souriau space is very similar to a Chen space except that the domains of the test functions are different.
This is related to a very interesting fact.
All of the above definitions consist of a set together with certain ``test functions'' between the set and certain ``test spaces'' with the functions either to the set or out of it.
What is worth noting is that for models consisting of maps to the set, several choices of test spaces have been proposed.
However, for models consisting of maps out of the set, all have used only \R for the test space.
One could speculate that the reason for this is that it is well\hyp{}known that a map into a subset of a Euclidean space is smooth if and only if all the coordinate projections are smooth; the corresponding result for maps out of a (suitable) subset of a Euclidean space\emhyp{}namely, Boman's theorem and Kriegl and Michor's extension\emhyp{}is much less well\hyp{}known.
\end{enumerate}
\end{remark}

\section{A General Recipe}
\label{sec:recipe}

All of the heretofore proposed categories of ``smooth objects'' can be put into a \emph{standard form}.
This standard form is determined by certain choices.
In short, these choices are of an underlying category, of test spaces, and of forcing conditions.
The underlying category should be thought of as ``those objects to which one might wish to give a smooth structure''.
The test spaces should be thought of as ``those objects for which there is an indisputable smooth structure''.
The forcing conditions should be thought of as ensuring that ``morphisms which ought to be smooth actually are smooth''.

A close relative of this structure is of sheaves on a site.
The site is the category of test spaces and the sheaf condition is the forcing condition.
To see where the underlying category fits in, one should consider so\hyp{}called \emph{quasi\hyp{}representable} sheaves on a site, or \emph{concrete} sheaves on a \emph{concrete} site, see \cite{0807.1704} and the references therein.

Another close relative of this is the \emph{Isbell envelope} of an essentially small category.
The Isbell envelope of \(\acat\), written \(\Eacat\), is a category whose objects consist of a contravariant functor \(\itest \colon \Oacat \to \xcat\), a covariant functor \(\otest \colon \acat \to \xcat\), and a natural transformation \(\itest \times \otest \to \Hom{\acat}{-}{-}\).
Morphisms in this category are pairs of natural transformations between the functors.
One can think of an \Eaobj as a \emph{virtual} object of \(\acat\) in that it can be experimented on using \aobjs.
This notion can also be encoded using \emph{profunctors}.
We shall comment on this relationship later.

\subsection{Virtual Objects}

Let us now describe our structure precisely.
We are not aiming for the most general approach here; rather we wish to find a setting that helps with our study of the actual examples already posited.
Therefore, we wish to keep our recipe as close as possible to the definitions in section~\ref{sec:smoothcats}.

\medskip

The easy part is the two categories.
We fix these: \ucat, \(\ucat\), and \tcat, \(\tcat\).
We also choose a faithful functor \(\ufunc \colon \tcat \to \ucat\) from \tcat to \ucat (although not necessary, it will usually be the case that \tcat will be a subcategory of \ucat).
This allows us to define the first stage.

\begin{defn}
Let \(\ucat\) and \(\tcat\) be categories\emhyp{}\ucat and \tcat respectively.
Let \(\ufunc \colon \tcat \to \ucat\) be a faithful functor.

\uVtcatu, \(\uVtcat\), is the following category.
A \emph{\uVtobj{}} consists of a triple \((X, \itest, \otest)\) where
\begin{itemize}
\item \(X\) is an \uobj,
\item \(\itest \colon \tcat \to \xcat\) is a subfunctor of the functor \(\tobj \mapsto \Hom{\ucat}{\ufunc(\tobj)}{\uobj}\),
\item \(\otest \colon \tcat \to \xcat\) is a subfunctor of the functor \(\tobj \mapsto \Hom{\ucat}{\uobj}{\ufunc(\tobj)}\).
\end{itemize}
These have to satisfy the following compatibility condition.
Consider the functors \(\tcat \times \Otcat \to \xcat\),
\[
(\tobj, \tobj') \to \Hom{\ucat}{\ufunc(\tobj)}{\uobj} \times \Hom{\ucat}{\uobj}{\ufunc(\tobj')},  \qquad (\tobj, \tobj') \to \Hom{\ucat}{\ufunc(\tobj)}{\ufunc(\tobj')}.
\]
Composition defines a natural transformation from the first to the second.
This natural transformation must induce a natural transformation from \(\itest \times \otest\) to \(\Hom{\tcat}{-}{-}\) (with the latter viewed as a subfunctor of \(\Hom{\ucat}{\ufunc(-)}{\ufunc(-)}\).

A \uVtmoralt, say \((\uobj_1, \itest_1, \otest_1)\) to  \((\uobj_2, \itest_2, \otest_2)\), is a \umor \(\umor \colon \uobj_1 \to \uobj_2\) with the properties that \(\umor \ifn \in \itest_2\) for all \(\ifn \in \itest_1\) and \(\ofn \umor \in \otest_1\) for all \(\ofn \in \otest_2\).

We shall denote the obvious forgetful functor \(\uVtcat \to \ucat\) by \(\uVtobj \mapsto \forfunc[\uVtobj]\).
\end{defn}

\begin{remark}
\label{rk:presmth}
\begin{enumerate}
\item When we say ``for all \(\ifn \in \itest\)'' we mean ``for all \tobjs, \(\tobj\), and all \(\ifn \in \itest(\tobj)\)''.
This is a shorthand that we shall frequently use to avoid having to introduce unnecessary dummies.
For example, given a \umor, \(\umor\), we write ``\(\umor \in \im \ufunc\)'' to mean that there is a \tmor, \(\tmor\), such that \(\umor = \ufunc(\tmor)\).
Implicit in this is that the domain and codomain of \(\umor\) come from \tobjs via \(\ufunc\).

\item In our definition we have allowed for test maps both into and out of our test spaces.
This seems a little at variance with the definitions that we are generalising.
We shall see that, in the presence of certain forcing conditions, one of these families can be effectively removed.
Putting both in at the start allows us to consider both situations at the same time.

\item
\label{it:sub}
When we say ``subfunctor'' we mean this in the strictest sense: that \(\itest(\tobj)\) is a subset of \(\Hom{\ucat}{\ufunc(\tobj)}{\uobj}\).
This is not technically necessary but makes the notation and exposition considerably simpler.
This means that \(\uVtcat\) is an amnestic construct over \(\ucat\).
It is obvious that it is (uniquely) transportable as well.
Moreover, if \(\tcat\) has a small skeleton then the fibres of the forgetful functor \(\forfunc \colon \uVtcat \to \ucat\) are small.
\end{enumerate}
\end{remark}

Now let us consider the forcing conditions.
As we have said before, we are more interested in finding a context into which we can place all known examples than in finding the most general setting.
A forcing condition will consist of two parts: an input forcing condition and an output forcing condition.
The two are formally similar, related by an obvious ``flip'', so to ease the exposition we shall focus on one type.
We choose, for no good reason, the input forcing condition.

To define an input forcing condition, we proceed via several steps.
The first is to record an obvious lemma that says that \tcat naturally embeds in \uVtcat.

\begin{lemma}
\label{lem:tembed}
There is a functor \(\sfunc \colon \tcat \to \uVtcat\) which embeds \(\tcat\) as a full subcategory of \(\uVtcat\).
The \uVtobj[\sfunc(\tobj)] has underlying \uobj[\ufunc(\tobj)],  \itest \(\itest(\tobj') = \Hom{\tcat}{\tobj'}{\tobj}\), and \otest \(\otest(\tobj') = \Hom{\tcat}{\tobj}{\tobj'}\).
On morphisms, \(\sfunc(\tmor)\) is determined by the requirement that \(\forfunc[\sfunc(\tmor)] = \ufunc(\tmor)\).

This functor has several important properties.
As functors \(\tcat \to \ucat\), we have \(\forfunc[\sfunc] = \ufunc\).
For a \uVtobj[\uVtobj] and \tobj[\tobj] the subsets \(\itest(\tobj)\) and \(\Hom{\uVtcat}{\sfunc(\tobj)}{\uVtobj}\) of \(\Hom{\ucat}{\ufunc(\tobj)}{\abs{\uVtobj}}\) are the same; similarly for the output test functions.
\noproof
\end{lemma}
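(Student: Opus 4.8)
The plan is to verify in turn that \(\sfunc\) is well defined on objects and on morphisms and is functorial, that it is faithful, and finally to prove the displayed identification \(\itest(\tobj)=\Hom{\uVtcat}{\sfunc(\tobj)}{\uVtobj}\); fullness, and with it the embedding claim, then falls out as the special case in which the target is itself of the form \(\sfunc(\tobj_2)\). Throughout, the work is carried by two features of \(\ufunc\): it is faithful, which lets us regard the \(\tcat\)-hom-sets as honest subsets of the corresponding \(\ucat\)-hom-sets, and it is a functor, which makes the resulting assignments into subfunctors.

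For well-definedness on objects, faithfulness of \(\ufunc\) identifies \(\Hom{\tcat}{\tobj'}{\tobj}\) with a subset of \(\Hom{\ucat}{\ufunc(\tobj')}{\ufunc(\tobj)}\), and functoriality shows these subsets are closed under the induced action, so the \(\itest\) and \(\otest\) of \(\sfunc(\tobj)\) are genuine subfunctors. The compatibility condition asks that composing an input \(\alpha\colon\tobj'\to\tobj\) with an output \(\beta\colon\tobj\to\tobj''\) land in \(\Hom{\tcat}{\tobj'}{\tobj''}\); this is immediate from \(\ufunc(\beta)\ufunc(\alpha)=\ufunc(\beta\alpha)\). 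For morphisms I set \(\forfunc[\sfunc(\tmor)]=\ufunc(\tmor)\), which is a legal morphism because precomposition by \(\ufunc(\tmor)\) carries \(\Hom{\tcat}{\tobj'}{\tobj_1}\) into \(\Hom{\tcat}{\tobj'}{\tobj_2}\) and postcomposition carries \(\Hom{\tcat}{\tobj_2}{\tobj'}\) into \(\Hom{\tcat}{\tobj_1}{\tobj'}\), both by composition in \(\tcat\). Since \(\forfunc\) is faithful (Remark~\ref{rk:presmth}) and \(\ufunc\) is a functor, \(\sfunc\) preserves identities and composites, \(\forfunc[\sfunc]=\ufunc\) holds on the nose, and faithfulness of \(\sfunc\) is inherited from that of \(\ufunc\).

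The crux is the set identification. Fix a target \(\uVtobj=(X,\itest,\otest)\). Given a morphism \(\sfunc(\tobj)\to\uVtobj\), evaluate its defining condition on the identity of \(\tobj\), an element of the input family \(\Hom{\tcat}{\tobj}{\tobj}\) of \(\sfunc(\tobj)\); since \(\ufunc\) sends it to the identity of \(\ufunc(\tobj)\), the underlying map is forced to lie in \(\itest(\tobj)\), giving \(\Hom{\uVtcat}{\sfunc(\tobj)}{\uVtobj}\subseteq\itest(\tobj)\). Conversely, take \(\ifn\in\itest(\tobj)\subseteq\Hom{\ucat}{\ufunc(\tobj)}{X}\) and test whether it underlies a morphism. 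The input requirement, \(\ifn\,\ufunc(\alpha)\in\itest(\tobj')\) for every \(\alpha\colon\tobj'\to\tobj\), holds because \(\itest\) is a subfunctor of \(\Hom{\ucat}{\ufunc(-)}{X}\) and hence closed under precomposition by \(\ufunc(\alpha)\). The output requirement, \(\ofn\,\ifn\in\Hom{\tcat}{\tobj}{\tobj''}\) for every \(\ofn\in\otest(\tobj'')\), is exactly the compatibility condition on \(\uVtobj\), which forces the composite of an input test map with an output test map into \(\Hom{\tcat}{\tobj}{\tobj''}\). As \(\forfunc\) is faithful, \(\ifn\) underlies a unique morphism, so \(\itest(\tobj)\subseteq\Hom{\uVtcat}{\sfunc(\tobj)}{\uVtobj}\) and the two subsets of \(\Hom{\ucat}{\ufunc(\tobj)}{X}\) coincide. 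The statement for output test functions is the same argument after the input/output flip.

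Finally, specialising the identification to the target \(\sfunc(\tobj_2)\) gives \(\Hom{\uVtcat}{\sfunc(\tobj_1)}{\sfunc(\tobj_2)}=\Hom{\tcat}{\tobj_1}{\tobj_2}\), which is fullness; together with faithfulness this realises \(\tcat\) as a full subcategory. I expect the only genuinely load-bearing step to be the output half of the reverse inclusion, where the target's compatibility condition is precisely what forces an input-test-times-output-test composite back into \(\tcat\); every other step is routine bookkeeping with the faithfulness and functoriality of \(\ufunc\).
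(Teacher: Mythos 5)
Your proof is correct; the paper itself marks this lemma \noproof{} (it is stated as an obvious consequence of the definitions), and your verification supplies exactly the intended routine checks: the subfunctor and compatibility conditions for \(\sfunc(\tobj)\) follow from composition in \(\tcat\), the hom-set identification is obtained by evaluating at \(1_{\tobj}\) in one direction and using subfunctor closure plus the compatibility condition of \(\uVtobj\) in the other, and fullness is the special case \(\uVtobj = \sfunc(\tobj_2)\). You also correctly flag the only load-bearing step (the compatibility condition forcing \(\ofn\ifn\) into \(\Hom{\tcat}{\tobj}{\tobj''}\)); the one trivial looseness is citing Remark~\ref{rk:presmth} for faithfulness of \(\forfunc\), which is not literally what that remark asserts, though faithfulness is immediate since a \uVtmoralt{} is by definition a \umor{} with properties.
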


The next stage is to define the notion of a \emph{trial} for a pair \((\tobj,\uVtobj)\) where \(\tobj\) is a \tobj and \(\uVtobj\) a \uVtobj.
This provides a way to test whether a \umor  \(\umor \colon \ufunc(\tobj) \to \abs{\uVtobj}\) ``ought'' to be in \(\itest_{\uVtobj}(\tobj)\).
The idea being that if such a morphism succeeds at sufficiently many trials, it is ``forced''.

\begin{defn}
Let \(\uVtobj\) be a \uVtobj.
Let \(\tobj\) be a \tobj.
A \emph{trial} from \(\tobj\) to \(\uVtobj\) is defined to be a pair \(\trial{}\) where \(\tmor\) is a \tmor with target \(\tobj\) and \(\uVtmor\) is a \uVtmor with source \(\uVtobj\).

A \umor \(\umor \colon \ufunc(\tobj) \to \abs{\uVtobj}\) \emph{succeeds at the trial} if the \umor \(\trial{\umor}\) underlies a \uVtmor.
For a fixed \umor, \(\umor \colon \ufunc(\tobj) \to \abs{\uVtobj}\), we define \(\trials{\umor}\) to be the class of trials from \(\tobj\) to \(\uVtobj\) at which \(\umor\) succeeds.

Let \(\tobj_1, \tobj_2\) be \tobjs and \(\uVtobj_1, \uVtobj_2\) be \uVtobjs.
For \(i = 1,2\), let \(\trial[_i]{}\) be a trial from \(\tobj_i\) to \(\uVtobj_i\).
Let \(\tmor \colon \tobj_2 \to \tobj_1\) be a \tmor and \(\uVtmor \colon \uVtobj_1 \to \uVtobj_2\) be a \uVtmor.
We say that \emph{\(\trial[_2]{}\) is compatible with \(\trial[_1]{}\) along \(\tmor\) and \(\uVtmor\)} if there exist a \tmor \(\tmor'\) and \uVtmor \(\uVtmor'\) such that \(\tmor \tmor_2 = \tmor_1 \tmor'\) and \(\uVtmor_2 \uVtmor = \uVtmor' \uVtmor_1\).
For a family of trials, \(\m{F}\), from \(\tobj_1\) to \(\uVtobj_1\) we define \(\uVtmor \m{F} \tmor\) to be the family of trials from \(\tobj_2\) to \(\uVtobj_2\) with the property that each trial in \(\uVtmor \m{F} \tmor\) is compatible with a trial in \(\m{F}\).

We define \rcat, \(\rcat\), to be the category whose objects are triples \((\tobj, \uVtobj, \m{F})\) with \(\tobj\) a \tobj, \(\uVtobj\) a \uVtobj, and \(\m{F}\) a family of trials from \(\tobj\) to \(\uVtobj\).
The morphisms from \((\tobj_1, \uVtobj_2, \m{F}_1)\) to \((\tobj_2, \uVtobj_2, \m{F}_2)\) are pairs \((\tmor, \uVtmor)\) where \(\tmor \colon \tobj_2 \to \tobj_1\) is a \tmor and \(\uVtmor \colon \uVtobj_1 \to \uVtobj_2\) is a \uVtmor such that \(\uVtmor \m{F}_1 \tmor \subseteq \m{F}_2\).
\end{defn}

\begin{remark}
\begin{enumerate}
\item We can illustrate a trial, \(\trial{}\), diagramatically as follows.

\begin{centre}
\begin{tikzpicture}[node distance=1.5cm, auto,>=latex', thick]
    \path node (lblank) {};
    \path[->] node[right of=lblank] (tobj) {\(\tobj\)}
                  (lblank) edge node[diaglabel] {\(\tmor\)} (tobj);
    \path[->, dotted] node[right of=tobj] (uVtobj) {\(\uVtobj\)}
                  (tobj) edge (uVtobj);
    \path[->] node[right of=uVtobj] (rblank) {}
                  (uVtobj) edge node[diaglabel] {\(\uVtmor\)} (rblank);
\end{tikzpicture}
\end{centre}

The idea is to try to fill in the dotted arrow, though of course this does not make sense as the categories on the left and right are not the same.
More precisely, let us write \(\tobj'\) for the source of \(\tmor\) and \(\uVtobj'\) for the target of \(\uVtmor\).
Then a \umor, \(\umor \colon \ufunc(\tobj) \to \abs{\uVtobj}\), succeeds at this trial if the \umor

\begin{centre}
\begin{tikzpicture}[node distance=2cm, auto,>=latex', thick]
    \path[->] node (lblank) {\(\ufunc(\tobj')\)};
    \path[->] node[right of=lblank] (tobj) {\(\ufunc(\tobj)\)}
                  (lblank) edge node[diaglabel] {\(\ufunc(\tmor)\)} (tobj);
    \path[->] node[right of=tobj] (uVtobj) {\(\abs{\uVtobj}\)}
                  (tobj) edge node[diaglabel] {\(\umor\)} (uVtobj);
    \path[->] node[right of=uVtobj] (rblank) {\(\abs{\uVtobj'}\)}
                  (uVtobj) edge node[diaglabel] {\(\abs{\uVtmor}\)} (rblank);
\end{tikzpicture}
\end{centre}

lifts to a \uVtmor \(\sfunc(\tobj') \to \uVtobj'\).
Equivalently, if the above \umor is in \(\itest_{\uVtobj'}(\tobj')\).

\item
The compatibility condition is illustrated by the following diagram.

\begin{centre}
\begin{tikzpicture}[node distance=1.5cm, auto, >=latex', thick]
\path node (ulblank) {};
\path node[right of=ulblank] (tobj1) {\(\tobj_1\)};
\path node[right of=tobj1] (uVtobj1) {\(\uVtobj_1\)};
\path node[right of=uVtobj1] (urblank) {};
\path node[below of=ulblank] (dlblank) {};
\path node[right of=dlblank] (tobj2) {\(\tobj_2\)};
\path node[right of=tobj2] (uVtobj2) {\(\uVtobj_2\)};
\path node[right of=uVtobj2] (drblank) {};
\path[->] (ulblank) edge node {\(\tmor_1\)} (tobj1)
(tobj1) edge[dotted] (uVtobj1)
(uVtobj1) edge node[diaglabel] {\(\uVtmor_1\)} (urblank)
(dlblank) edge node[diaglabel] {\(\tmor_2\)} (tobj2)
(tobj2) edge[dotted] (uVtobj2)
(uVtobj2) edge node[diaglabel] {\(\uVtmor_2\)} (drblank)
(tobj2) edge node[diaglabel] {\(\tmor\)} (tobj1)
(uVtobj1) edge node[diaglabel] {\(\uVtmor\)} (uVtobj2)
(dlblank) edge node[diaglabel] {\(\tmor'\)} (ulblank)
(urblank) edge node[diaglabel] {\(\uVtmor'\)} (drblank);
\end{tikzpicture}
\end{centre}

The point being that if a \umor, \(\umor \colon \ufunc(\tobj_1) \to \abs{\uVtobj_1}\), succeeds at the first trial then \(\abs{\uVtmor} \umor \ufunc(\tmor)\) will succeed at the second.

\item \rcatu has an obvious functor to \(\Otcat \times \uVtcat\).
The fibre of this functor at \((\tobj, \uVtobj)\) is the partially ordered class of subclasses of trials from \(\tobj\) to \(\uVtobj\).
Observe that every triple \((\tobj, \uVtobj, \umor)\) with \(\umor\) a \umor from \(\ufunc(\tobj)\) to \(\abs{\uVtobj}\) defines an \robj, \((\tobj, \uVtobj, \trials{\umor})\).
Given a \tmor, \(\tmor \colon \tobj' \to \tobj\), and a \uVtmor, \(\uVtmor \colon \uVtobj \to \uVtobj'\), we have that \(\uVtmor \trials{\umor} \tmor \subseteq \trials{\abs{\uVtmor} \umor \ufunc(\tmor)}\) whence there is a \rmor from \((\tobj, \uVtobj, \trials{\umor})\) to \((\tobj', \uVtobj', \trials{\abs{\uVtmor} \umor \ufunc(\tmor)})\).
\end{enumerate}
\end{remark}

\begin{defn}
\label{def:forcing}
An \emph{input forcing condition} for \uVtobjs is a functor \(\iforce \colon \rcat \to \{0 \to 1\}\) with the property that for \tobjs, \(\tobj_1\) and \(\tobj_2\), \(\iforce(\tobj_1, \sfunc(\tobj_2), \m{F}) = 0\) if \((1,1) \notin \m{F}\).

For a \tobj, \(\tobj\), and a \uVtobj, \(\uVtobj\), we shall say that a family of trials, \(\m{F}\), from \(\tobj\) to \(\uVtobj\) is \emph{sufficient} if \(\iforce(\tobj, \uVtobj, \m{F}) = 1\).

For a \tobj, \(\tobj\), and a \uVtobj, \(\uVtobj\), we shall say that a \umor \(\umor \colon \ufunc(\tobj) \to \abs{\uVtobj}\) is \emph{forced} if \(\iforce(\tobj, \uVtobj, \trials{\umor}) = 1\).
\end{defn}

\begin{remark}
\begin{enumerate}
\item There is an obvious generalisation to output forcing conditions by ``flipping'' all the arrows.
We shall write \(\oforce\) for the corresponding functor.

\item From examining the definition of morphisms in \rcat we see that if \(\umor \colon \ufunc(\tobj) \to \abs{\uVtobj}\) is forced and \(\tmor \colon \tobj' \to \tobj\), \(\uVtmor \colon \uVtobj \to \uVtobj'\) are suitable morphisms, then \(\abs{\uVtmor} \umor \ufunc(\tmor)\) is also forced.

\item Similarly, if \(\m{F}\) is a sufficient family of trials from \(\tobj\) to \(\uVtobj\) and \(\m{F} \subseteq \m{F}'\) then \(\m{F}'\) is also sufficient.

\item The one restraint on the functor translates into the statement: ``Nothing obviously non\hyp{}smooth should ever be forced to be smooth''.
\end{enumerate}
\end{remark}

\begin{defn}
A \emph{forcing condition} is a choice of input forcing condition, \(\iforce\), and output forcing condition, \(\oforce\).

A \uVtobj, \(\uVtobj\), \emph{satisfies} the forcing condition \((\iforce, \oforce)\) if, whenever \(\tobj\) is a \tobj and \(\umor \colon \ufunc(\tobj) \to \abs{\uVtobj}\) is forced then \(\umor \in \itest_{\uVtobj}(\tobj)\) and, similarly, whenever \(\umor \colon \abs{\uVtobj} \to \ufunc(\tobj)\) is forced then \(\umor \in \otest_{\uVtobj}(\tobj)\).

Given a forcing condition, we write \(\uFVtcat\) for the full subcategory of \(\uVtcat\) consisting of \uVtobjs satisfying this forcing condition.
\end{defn}

Since forcing conditions take values in \(\{0 \to 1\}\), they form a partially ordered set and can thus be combined using logical connectors.

\subsection{Smooth Objects in the Wild}
\label{sec:smthwild}

Let us now specify to the matter in hand.
The above describes an extremely general set\hyp{}up, far broader than we shall need.
The first step to reducing to our examples is to find a minimal setting containing all of them.
For this, we limit our choices for test category, underlying category, and forcing condition.

As the forcing conditions are the most unfamilar part, we start by introducing a list of examples.
This list contains all that are needed to specify the given categories of smooth spaces.
We shall then translate the various categories of smooth spaces into this language.
We conclude with some remarks as to the characteristics of the test categories and underlying categories.

The way that we define, say, an input forcing condition is to list certain significant families of trials from a generic \tobj to a generic \uVtobj.
As the input forcing condition is a functor to \(\{0 \to 1\}\), this will force many other families to also be significant\emhyp{}any family that is the target of a morphism from one of the specified ones.
All the others are insignificant.

To check that these are well\hyp{}defined, the only thing to check is the ``non\hyp{}stupid'' condition, namely that nothing is forced that really shouldn't be forced.
We shall not do this here.

\begin{defn}
In the following, \(\uVtobj\) will be a \uVtobj and \(\tobj\) a \tobj.
We list the determining families of trials.

\begin{description}
\item[The input saturation condition]
\[
  \{(1_{\tobj}, \ofn) : \ofn \in \otest_{\uVtobj}\}
\]
Here, we use the fact that \(\otest_{\uVtobj}(\tobj') = \Hom{\uVtcat}{\uVtobj}{\sfunc(\tobj')}\) to regard \(\ofn\) as a \uVtmor.

\item[The output saturation condition]
\[
  \{(\ifn, 1_{\tobj}) : \ifn \in \itest_{\uVtobj}\}
\]

\item[The input determined condition]
\[
  \{(\tmor_\lambda, 1_{\uVtobj}) : \lambda \in \Lambda\}
\]
where \(\{\tmor_\lambda \colon \tobj_\lambda \to \tobj : \lambda \in \Lambda\}\) is a family of \tmors with the property that a \umor \(\umor \colon \ufunc(\tobj) \to \ufunc(\tobj')\) is in the image of \(\ufunc\) if \(\umor \ufunc(\tmor_\lambda)\) is in the image of \(\ufunc\) for all \(\lambda\).

\item[The output determined condition]
\[
  \{(1_{\uVtobj}, \tmor_\lambda) : \lambda \in \Lambda\}
\]
where \(\{\tmor_\lambda \colon \tobj \to \tobj_\lambda : \lambda \in \Lambda\}\) is a family of \tmors with the property that a \umor \(\umor \colon \ufunc(\tobj') \to \ufunc(\tobj)\) is in the image of \(\ufunc\) if \( \ufunc(\tmor_\lambda) \umor\) is in the image of \(\ufunc\) for all \(\lambda\).

\item[The input specifically\enhyp{}determined condition]
This is the same as the input determined condition except that the families \(\{\tmor_\lambda\}\) must be drawn from a pre\hyp{}specified list.

\item[The output specifically\enhyp{}determined condition]
This is to the output determined condition as the input specifically\enhyp{}determined condition is to the input determined condition.

\item[The input sheaf condition]
To define this, we need to assume that the test category is a site.
\[
  \{(\tmor_\lambda, 1_{\uVtobj}) : \lambda \in \Lambda\}
\]
where \(\{\tmor_\lambda \colon \tobj_\lambda \to \tobj : \lambda \in \Lambda\}\) is a covering of \(\tobj\).

\item[The ouptut sheaf condition]
To define this, we require \ucat to be of a topological flavour.
Also given a subset \(\uobj \subseteq \forfunc[\uVtobj]\) we define the \emph{subspace} \Vtalgobj structure on \(\uobj\) to be given by
\begin{align*}
\itest(\tobj') &= \{\ifn \colon \ufunc(\tobj') \to \uobj : \forfunc[\iota_\lambda]\ifn \in \itest_{\uVtobj}(\tobj')\}, \\
\otest(\tobj') &= \{\ofn\forfunc[\iota_\lambda] \colon \uobj \to \ufunc(\tobj') : \ofn \in \otest_{\uVtobj}(\tobj')\}.
\end{align*}
With this we take families of trials of the form
\[
  \{(\iota_\lambda, 1_{\tobj}) : \lambda \in \Lambda\}
\]
where \(\iota_\lambda \colon \uVtobj_\lambda \to \uVtobj\) is a family of \uVtmors such that \(\forfunc[\iota_\lambda] \colon \forfunc[\uVtobj_\lambda] \to \forfunc[\uVtobj]\) is the inclusion of an open subset of \(\forfunc[\uVtobj]\), the \(\forfunc[\uVtobj_\lambda]\) cover \(\forfunc[\uVtobj]\), and the \Vtalgobj structure on \(\uVtobj_\lambda\) is the subspace \Vtalgobj.

\item[The input terminal condition]
Assume that \tcat has a terminal object, say \(\term{t}\).
The empty family of trials from \(\term{t}\) to \(\uVtobj\) is significant.

\item[The output terminal condition]
Assume that \tcat has a terminal object, say \(\term{t}\).
The empty family of trials from \(\uVtobj\) to \(\term{t}\) is significant.

\item[The empty input condition]
No family of trials is significant.

\item[The empty output condition]
No family of trials is significant.
\end{description}
\end{defn}

\begin{remark}
\begin{enumerate}
\item 
One caveat of this method of specifying forcing conditions is that, due to the functorial nature of a forcing condition, there may be ``unexpected'' significant families.
In the list above we gave, for most of the conditions, some significant families of trials for any pair \((\tobj, \uVtobj)\) (thinking of input forcing conditions).
It is tempting to think that a \umor \(\umor \colon \ufunc(\tobj) \to \forfunc[\uVtobj]\) is forced if \emph{and only if} \(\trials{\umor}\) contains one of these generating families.
This may not be true; for example, if \(\umor\) is forced then \(\umor \ufunc(\tmor)\) must also be forced even if \(\trials{\umor \ufunc(\tmor)}\) doesn't contain a generating family.

However, most of the conditions do have this property: that if a \umor, \(\umor\), is forced then \(\trials{\umor}\) contains one of the given generating families.
The terminal conditions do not have this property, and if the families are not chosen wisely then the specifically\enhyp{}determined conditions may not.

\item 
In the input saturation condition, a \umor \(\umor \colon \ufunc(\tobj) \to \forfunc[\uVtobj]\) is forced if \(\ofn \umor \colon \ufunc(\tobj) \to \ufunc(\tobj')\) lifts to a \uVtmor \(\sfunc(\tobj) \to \sfunc(\tobj')\) for all \(\tobj'\); equivalently, if it comes from a \tmor \(\tobj \to \tobj'\).

\item
The difference between the determined and specifically\enhyp{}determined conditions is that in the former \emph{any} family of morphisms satisfying the requirement may be used in the test.
In the latter, the family of morphisms has to come from a list which is drawn up at the start.
The reason that one might prefer to use this is that the full list of families of morphisms that satisfy the requirement may be difficult to write down.
It can therefore be some work to decide whether or not a given morphism is forced.
With a fixed list, the task becomes much easier.
However, as remarked above, unless these lists are chosen carefully there may still be some morphisms that are forced which cannot be tested merely by the families on the stated lists.

\item Let us consider the output sheaf condition.
Consider a trial

\begin{centre}
\begin{tikzpicture}[node distance=1.5cm, auto,>=latex', thick]
\node (uobj) {\(\uobj\)};
\node[right of=uobj] (uVtobj) {\(\uVtobj\)};
\node[right of=uVtobj] (tobj) {\(\tobj\)};
\node[right of=tobj] (tobj2) {\(\tobj\)};
\path[->]
(uobj) edge node[diaglabel] {\(\iota\)} (uVtobj)
(uVtobj) edge[dotted] (tobj)
(tobj) edge node[diaglabel] {\(=\)} (tobj2);
\end{tikzpicture}
\end{centre}

The statement that \(\umor \colon \forfunc[\uVtobj] \to \ufunc(\tobj)\) succeeds this trial means that \(\umor\restrict_{\uobj} \colon \uobj \to \ufunc(\tobj)\) is an output test morphism for \(\uobj\).
By definition, therefore, there is an output test morphism \(\umor' \colon \forfunc[\uVtobj] \to \ufunc(\tobj)\) such that \(\umor' \restrict_{\uobj} = \umor\restrict_{\uobj}\).

\item The saturation conditions allow us to effectively ignore the corresponding family of test functions when looking at whether a morphism on the underlying \uobjs lifts to a \uFVtmoralt.
For example, suppose that the output test functions are saturated.
Then if \(\umor \colon \forfunc[\uFVtobj_1] \to \forfunc[\uFVtobj_2]\) is such that \(\umor \ifn \in \itest_{\uFVtobj_2}\) for all \(\ifn \in \itest_{\uFVtobj_1}\) then for \(\ofn \in \otest_{\uFVtobj_2}\) and \(\ifn \in \itest_{\uFVtobj_1}\), \(\ofn \umor \ifn\) came from a morphism in \(\tcat\).
Whence \(\ofn \umor\) is forced and hence \(\ofn \umor \in \otest_{\uFVtobj_1}\).
Thus \(\umor\) underlies a \uFVtmor.

\item The empty forcing conditions translate to the fact that no morphisms are forced.

\item The terminal forcing conditions translate to the fact that any morphism which factors through the terminal object of \tcat is forced.

\item The saturation condition is the ``top'' of the family of forcing conditions whilst the empty condition is the ``bottom''.
The forcing conditions thus form a complete (possibly large) lattice.
\end{enumerate}
\end{remark}
\medskip

Let us now translate the examples already ``in the wild'' into our formalism.
We shall also include all of Chen's definitions to provide a wider scope for comparisions.

\paragraph{Fr\"olicher spaces}

\begin{enumerate}
\item The underlying category is simply \(\xcat\).
\item There is one test space, \R.
\item The input forcing conditions is saturation.
\item The output forcing conditions is saturation.
\end{enumerate}

\paragraph{Chen spaces}
Although we have only given Chen's last definition in section~\ref{sec:smoothcats}, we shall give the standard form of all four of his definitions.

\begin{description}
\item[\cite{kc}]

\begin{enumerate}
\item The underlying category is that of all Hausdorff topological spaces.
\item The test spaces are all closed, convex subsets of Euclidean spaces.
\item The input forcing condition is the terminal condition.
\item The output forcing condition is saturation.
\end{enumerate}

\item[\cite{kc5}]

\begin{enumerate}
\item The underlying category is that of all topological spaces.
\item The test spaces are all closed, convex subsets of Euclidean spaces.
\item The input forcing condition is terminal condition.
\item The output forcing condition is saturation.
\end{enumerate}

\item[\cite{kc5}]

\begin{enumerate}
\item The underlying category is that of all topological spaces.
\item The test spaces are all closed, convex subsets of Euclidean spaces.
\item The input forcing condition is the determined and the terminal condition.
\item The output forcing condition is saturation.
\end{enumerate}

\item[\cite{kc3}]

\begin{enumerate}
\item The underlying category is \(\xcat\).
\item The test spaces are all convex subsets of Euclidean spaces.
\item The input forcing condition is the sheaf condition and the terminal condition.
\item The output forcing condition is saturation.
\end{enumerate}
\end{description}

\paragraph{Souriau Spaces}

\begin{enumerate}
\item The underlying category is \(\xcat\).
\item The test spaces are all open subsets of Euclidean spaces.
\item The input forcing condition is the sheaf condition and the terminal condition.
\item The output forcing condition is saturation.
\end{enumerate}

\paragraph{Sikorski Spaces}

\begin{enumerate}
\item The underlying category is that of topological spaces.
\item The test spaces are the Euclidean spaces.
\item The input forcing condition is saturation.
\item The output forcing condition is the sheaf condition, the specifically\enhyp{}determined condition, and the terminal condition.
\end{enumerate}

These spaces are, perhaps, the hardest to see how to make them fit our standard form.
There appears to be one test space, \R, and three conditions that need to be satisfied: that of being an algebra, the locally detectable condition, and the last condition involving \(k\)\enhyp{}tuples of maps.
The locally detectable condition is clearly the output sheaf condition.
The last condition leads us to expand our set of test spaces.
This condition appears to be saying that the composition of a test morphism to \(\R^k\) and a \cimap \(\R^k \to \R\) is again a test morphism.
Thus if we expand the test spaces to all Euclidean spaces, we appear to get this condition for free.
The catch is that we need to force the condition that if \(f_1, \dotsc, f_k\) are test maps to \R then \((f_1, \dotsc, f_k)\) is a test map to \(\R^k\).
This is the \emph{specifically\enhyp{}determined} condition with the coordinate projections as the determining family.
The condition that the functions be an algebra is then almost vaccuous.
For providing that \(\otest(\R)\) is not empty we obtain the structure of an algebra using the smooth maps \(1 \in \Ci(\R,\R)\) and \(\alpha, \mu \in \Ci(\R^2, \R)\) defined by \(1(t) = 1\), \(\alpha(s,t) = s + t\), and \(\mu(s,t) = s \cdot t\).
To ensure that \(\otest(\R)\) is not empty, we impose the output terminal condition.

\paragraph{Smith Spaces}

\begin{enumerate}
\item The underlying category is that of topological spaces.
\item There is one test space, \R.
\item The input forcing condition is saturation.
\item The output forcing condition is saturation.
\end{enumerate}

From the definition, it would appear that Smith requires different families for the input and output test spaces.
However, due to Boman's theorem, \cite{jb3}, it is clear that in the closure condition it is sufficient to consider only those maps from \R.
It is interesting to note that the definition of a Smith space preceeded Boman's result.

\medskip

Let us comment now on the characteristics of the categories appearing in the above.
Firstly, let us consider the test categories.
It is possible to embed these categories as full subcategories of one ``maximal'' category.

\begin{defn}
\emph{\mcatu{}} is the amnestic, transportable construct generated by the following category.
The \mobjs are those subsets \(\mobj \subseteq \R^n\) with the property that each \(\melt \in \mobj\) has an open neighbourhood, say \(V\), in \(\R^n\) and a diffeomorphism \(\psi \colon V \cong U\) with \(U\) also an open subset of \(\R^n\) such that \(\psi(V \cap \mobj)\) is convex.
The \mmoralts are the \cimaps.
\end{defn}

This category contains all convex subsets of finite dimensional affine spaces, all open subsets of affine spaces, all smooth manifolds, and all smooth manifolds with boundary, as well as a good deal else.

Our reason for doing this embedding is that when comparing the categories of smooth objects we shall want to consider modifications of the test category.
By embedding our categories in this way, we can always factor our modifications as passing from a category to a full subcategory or vice versa.
This will simplify the exposition without diminishing its relevance.

Now let us consider \ucat.
In the examples given, it is either \(\xcat\) or some category of topological spaces; either all topological spaces or Hausdorff topological spaces.
One might conceivably wish to restrict ones attention further to, say, regular, normal, paracompact, or metrisable spaces.

When considering how to define a category of smooth spaces, the main distinction is between \(\xcat\) and the others; the question being as to whether ``smoothness'' is a property that is built on continuity or whether it stands in its own right.

For the purpose of comparing the categories, the important factor is in how the categories relate to each other.
The category of Hausdorff spaces is a reflective full subcategory of that of all topological spaces, whilst topological spaces forms a topological category over \(\xcat\).
Thus these are the features that we shall consider.

We shall return briefly to the issue of topology in section~\ref{sec:topology}.

\section{Functors}
\label{sec:functor}

Our purpose now is to define certain functors between our categories.
We shall start by defining the ``obvious'' functors.
Later, we shall show that these contain all the ``interesting'' functors.

The functors fall into three types: change of underlying category, change of family of test spaces, and change of forcing condition.
By composing these functors, we obtain functors between any two of our categories.
In addition, we can restrict our attention to functors where the change is particularly simple.
\tcatu is specified by a subclass of the class of objects of \mcat.
We can therefore order \tcats by inclusion and need only consider the case where one is contained in the other.
Similarly, we can order forcing conditions.
Informally, we say that one forcing condition is less than another if every forced morphism for the first is forced for the second.
Formally, we say that \((\iforce^1, \oforce^1) \preceq (\iforce^2, \oforce^2)\) if \(\iforce^1 \le \iforce^2\) and \(\oforce^1 \le \oforce^2\) (using the ordering on \(\{0 \to 1\}\)).
Since we can ``or'' and ``and'' forcing conditions, we need only consider the case where one forcing condition is less than another.

The functor in one direction is usually straightforward.
For functors in the opposite direction we use a little lattice theory.

\subsection{The Fibre Categories}

Let us fix \ucat, \(\ucat\), \tcat, \(\tcat\), and a forcing condition \((\iforce, \oforce)\).
Let \(\uFVtcat\) be \uFVtcat.
Let \(\uobj\) be an \uobj.
Let \(\uFVtcat_{\uobj}\) be the fibre at \(\uobj\) of the forgetful functor \(\uFVtcat \to \ucat\).
This is a, possibly large, partially ordered class.
(Recall that in the definition of a \uVtobj we insisted that the input and output test functors be strict subfunctors of the hom\hyp{}functor; this ensures that the obvious quasi\hyp{}ordering on \(\uFVtcat_{\uobj}\) is a partial ordering; also if \(\tcat\) has a small skeleton then this class will be an actual set.)

\begin{proposition}
\label{prop:lattice}
\(\uFVtcat_{\uobj}\) is a complete lattice.
\end{proposition}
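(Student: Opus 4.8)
The plan is to realise \(\uFVtcat_{\uobj}\) as the fixed-point set of a closure operator on a complete lattice. First I would pin down the fibre order: a \uVtmor over \(1_{\uobj}\) from \((\uobj,\itest,\otest)\) to \((\uobj,\itest',\otest')\) is exactly the assertion that \(\itest\subseteq\itest'\) and \(\otest'\subseteq\otest\), so in the fibre the input functor increases while the output functor decreases. On the ambient fibre \(\uVtcat_{\uobj}\), with no forcing imposed, I would then exhibit all meets and joins explicitly by \(\bigwedge_\alpha(\itest_\alpha,\otest_\alpha)=(\bigcap_\alpha\itest_\alpha,\bigcup_\alpha\otest_\alpha)\) and \(\bigvee_\alpha(\itest_\alpha,\otest_\alpha)=(\bigcup_\alpha\itest_\alpha,\bigcap_\alpha\otest_\alpha)\). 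Unions and intersections of subfunctors are again subfunctors, and the compatibility condition is inherited because any composite of an input datum \(\ifn\) with an output datum \(\ofn\) appearing in the putative meet or join already occurs inside one of the \((\itest_\alpha,\otest_\alpha)\). Hence \(\uVtcat_{\uobj}=:L\) is a complete lattice.

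Next I would reformulate the forcing condition. Writing \(\Phi_{\mathrm{in}}(x)\) for the set of forced input maps and \(\Phi_{\mathrm{out}}(x)\) for the forced output maps at \(x\in L\), Definition~\ref{def:forcing} says that \(x\in\uFVtcat_{\uobj}\) precisely when \(\Phi_{\mathrm{in}}(x)\subseteq\itest_x\) and \(\Phi_{\mathrm{out}}(x)\subseteq\otest_x\); call these two requirements \(S_{\mathrm{in}}\) and \(S_{\mathrm{out}}\), so that \(\uFVtcat_{\uobj}=S_{\mathrm{in}}\cap S_{\mathrm{out}}\). The crucial monotonicity comes for free from functoriality of \(\iforce\): if \(x\le y\) then \(1_{\uobj}\) underlies \(\iota\colon x\to y\), and for any \(\umor\colon\ufunc(\tobj)\to\abs{\uVtobj}\) the pair \((1_{\tobj},\iota)\) is a morphism \((\tobj,x,\trials{\umor})\to(\tobj,y,\trials{\umor})\) in \(\rcat\), by the inclusion \(\uVtmor\trials{\umor}\tmor\subseteq\trials{\abs{\uVtmor}\umor\ufunc(\tmor)}\) noted earlier. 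Applying \(\iforce\) shows anything forced at \(x\) is forced at \(y\); thus \(\Phi_{\mathrm{in}}\) is order-preserving and, dually, \(\Phi_{\mathrm{out}}\) is order-reversing. It then follows that \(S_{\mathrm{in}}\) is closed under arbitrary meets in \(L\) and contains \(\top_L\), so it is a closure system and hence a complete lattice, with associated closure operator \(c\); symmetrically \(S_{\mathrm{out}}\) is closed under arbitrary joins and contains \(\bot_L\).

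The heart of the argument is the interaction of the two conditions, and I would prove the key lemma that \(c\) maps \(S_{\mathrm{out}}\) into itself. The point is that \(c(x)\) adjoins forced input maps \emph{without altering} \(\otest_x\). Indeed, a forced input map, post-composed with a genuine output test function \(\ofn\in\otest_x(\tobj')\) (which underlies a \uVtmor \(x\to\sfunc(\tobj')\) by Lemma~\ref{lem:tembed}), is again forced, now relative to the representable object \(\sfunc(\tobj')\); the defining constraint on a forcing condition then forces it to succeed at the trivial trial, i.e.\ to lie in \(\im\ufunc\). Consequently the forced inputs remain compatible with every function of \(\otest_x\), so one may adjoin all of them while keeping \(\otest_x\) intact, and by minimality the output part of \(c(x)\) equals \(\otest_x\). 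Since \(c(x)\ge x\) and \(\Phi_{\mathrm{out}}\) is order-reversing, \(\Phi_{\mathrm{out}}(c(x))\subseteq\Phi_{\mathrm{out}}(x)\subseteq\otest_x=\otest_{c(x)}\), whence \(c(x)\in S_{\mathrm{out}}\). Therefore \(c\) restricts to a closure operator on the complete lattice \(S_{\mathrm{out}}\), and its fixed-point set is exactly \(S_{\mathrm{out}}\cap S_{\mathrm{in}}=\uFVtcat_{\uobj}\); as the fixed points of a closure operator on a complete lattice again form a complete lattice, the proposition follows.

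The main obstacle is precisely this last interaction step. Showing that \(\uVtcat_{\uobj}\) is a complete lattice and that \(\Phi_{\mathrm{in}},\Phi_{\mathrm{out}}\) are respectively order-preserving and order-reversing is routine bookkeeping, but the desired conclusion is \emph{false} for an arbitrary pairing of a closure system with an interior system inside a complete lattice. What rescues the situation is the non-stupid clause in the definition of a forcing condition, which guarantees that forced input data and genuine output data compose to honest test morphisms, so that closing up one family never disturbs the other. I would therefore concentrate the real care on verifying that this compatibility genuinely leaves \(\otest_x\) fixed under \(c\) (and, dually, \(\itest_x\) fixed under the corresponding interior operator), since everything else becomes formal once the two operators are known not to interfere.
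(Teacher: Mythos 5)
Your argument is correct in substance, but it organises the proof quite differently from the paper, and the comparison is instructive. The paper works entirely inside \(\uFVtcat_{\uobj}\) and constructs the meet of a family \(\family{S}\) by hand: intersect the input functors over \(\family{S}\), pass to the maximal output family compatible with that intersection (and dually, intersect output functors over the lower bounds \(\family{S}^c\)), verify the forcing condition for the two auxiliary objects \((\uobj, \itest, \otest')\) and \((\uobj, \itest', \otest)\), and then splice these into the meet \((\uobj, \itest, \otest)\). You instead factor everything through the ambient pre\hyp{}forcing fibre \(\uVtcat_{\uobj}\), exhibit it as a complete lattice with explicit meets and joins, observe that the input condition cuts out a closure system \(S_{\mathrm{in}}\) and the output condition a dual system \(S_{\mathrm{out}}\), and conclude via the fixed\hyp{}point theorem for a closure operator, the real work being your non\hyp{}interference lemma. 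The mathematical core is the same in both proofs: monotonicity of forcedness along fibre morphisms (functoriality of \(\iforce\), via the observation that \(\uVtmor\trials{\umor}\tmor \subseteq \trials{\abs{\uVtmor}\umor\ufunc(\tmor)}\)) and the ``non\hyp{}stupid'' clause of Definition~\ref{def:forcing} evaluated at representables via Lemma~\ref{lem:tembed}, which is exactly how the paper deduces \(\umor\ifn \in \im\ufunc\). Your route is conceptually tidier; the paper's route yields explicit formulas for the lattice operations, which it reuses later (the description of the maximum of \(\uFVtcat_{\uobj}\), Proposition~\ref{prop:foradj}, and the extension functors all quote ``from the proof of Proposition~\ref{prop:lattice}''), so your argument would need supplementing at those points. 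One step of yours needs tightening: adjoining the forced inputs to \(\itest_x\) \emph{once} need not produce an element of \(S_{\mathrm{in}}\), since new morphisms may become forced at the enlarged object, so ``by minimality the output part of \(c(x)\) equals \(\otest_x\)'' is not immediate as written. The repair is to jump straight to the maximal input family compatible with \(\otest_x\), namely \(\itest^{\mathrm{max}}(\tobj) = \{\ifn \colon \ufunc(\tobj) \to \uobj : \ofn\ifn \in \im\ufunc \text{ for all } \ofn \in \otest_x\}\); your own composition argument (with the lift of \(\ofn\) taken at the enlarged object, whose output family is still \(\otest_x\), rather than at \(x\)) shows that \((\uobj, \itest^{\mathrm{max}}, \otest_x)\) lies in \(S_{\mathrm{in}}\), and this single witness pins the output component of \(c(x)\) at exactly \(\otest_x\). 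This repaired object is precisely the paper's \((\uobj, \itest, \otest')\) in disguise, so the gap is cosmetic rather than structural.
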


\begin{proof}
Let \(\family{S}\) be a family in \(\uFVtcat_{\uobj}\).
Let \(\family{S}^c\) be the (possibly empty) family
\[
  \family{S}^c \coloneqq \{\uFVtobj \in \uFVtcat_{\uobj} : \uFVtobj \preceq \uFVtobj' \text{ for all } \uFVtobj' \in \family{S}\}.
\]

For a \tobj, \(\tobj\), define
\begin{align*}
\itest(\tobj) &\coloneqq \bigcap_{\uFVtobj \in \family{S}} \itest_{\uFVtobj}(\tobj), \\
\otest(\tobj) &\coloneqq \bigcap_{\uFVtobj \in \family{S}^c} \otest_{\uFVtobj}(\tobj), \\
\itest'(\tobj) &\coloneqq \{ \ifn \colon \ufunc(\tobj) \to \uobj : \ofn \ifn \in \im \ufunc \text{ for all } \ifn \in \itest\}, \\
\otest'(\tobj) &\coloneqq \{ \ofn \colon \uobj \to \ufunc(\tobj) : \ofn \ifn \in \im \ufunc \text{ for all } \ofn \in \otest\}.
\end{align*}
Although the families may be large, the intersections are all taking place within the \emph{sets} \(\Hom{\ucat}{\ufunc(\tobj)}{\uobj}\) and \(\Hom{\ucat}{\uobj}{\ufunc(\tobj)}\).
In particular, if \(\family{S}^c\) is empty, \(\otest(\tobj) = \Hom{\ucat}{\uobj}{\ufunc(\tobj)}\).

That \(\itest\) and \(\otest\) are subfunctors of the requisite hom\hyp{}functors is obvious: if \(\tmor\) is a \tmor and \(\ifn \in \itest\) then \(\ifn \in \itest_{\uFVtobj}\) for all \(\uFVtobj \in \family{S}\) whence \(\ifn \ufunc(\tmor) \in \itest_{\uFVtobj}\) for all \(\uFVtobj \in \family{S}\) and so \(\ifn \ufunc(\tmor) \in \itest\).
For \(\itest'\), let \(\ifn \in \itest'(\tobj)\) and let \(\tmor \colon \tobj' \to \tobj\) be a \tmor.
Let \(\ofn \in \itest'\).
By definition, \(\ofn \ifn \in \im \ufunc\) whence \(\ofn \ifn \ufunc(\tmor) \in \im \ufunc\).
As this holds for all \(\ofn\), \(\ifn \ufunc(\tmor) \in \itest'(\tobj')\).
Hence \(\itest\) is a functor \(\tcat \to \xcat\).
It is clearly a subfunctor of the requisite hom\hyp{}functor.

By construction, \(\itest\) and \(\otest'\) are compatible, as are \(\otest\) and \(\itest'\).
Hence \((\uobj, \itest, \otest')\) and \((\uobj, \itest', \otest)\) are \uVtobjs.
We observe that, for any \(\uFVtobj \in \family{S}\), \(\itest \subseteq \itest_{\uFVtobj}\) and\emhyp{}by the compatibility condition\emhyp{}\(\otest' \supseteq \otest_{\uFVtobj}\).
Hence the identity on \(\uobj\) lifts to a \uVtmor \((\uobj, \itest, \otest') \to \uFVtobj\) for any \(\uFVtobj \in \family{S}\).
Similarly, the identity on \(\uobj\) lifts to a \uVtmor \(\uFVtobj \to (\uobj, \itest', \otest)\) for any \(\uFVtobj \in \family{S}^c\).

To show that both are \uFVtobjs, we need to show that they satisfy the  forcing condition.
Let us consider \((\uobj, \itest, \otest')\).
Let \(\tobj\) be a \tobj and let \(\umor \colon \ufunc(\tobj) \to \uobj\) be a \umor that is forced for the pair \((\tobj, (\uobj, \itest, \otest'))\).
Let \(\uFVtobj \in \family{S}\).
As the identity on \(\uobj\) lifts to a \uVtmor \((\uobj, \itest, \otest') \to \uFVtobj\), \(\umor\) is also forced for the pair \((\tobj, \uFVtobj)\).
Hence \(\umor \in \itest_{\uFVtobj}\).
As this holds for all \(\uFVtobj \in \family{S}\), \(\umor \in \itest\).
Now let \(\umor \colon \uobj \to \ufunc(\tobj)\) be forced for the pair \((\uobj, \itest, \otest'), \tobj)\).
Let \(\ifn \in \itest(\tobj')\).
By Lemma~\ref{lem:tembed}, \(\ifn\) lifts to a \uVtmor \(\sfunc(\tobj') \to (\uobj, \itest, \otest')\).
Hence \(\umor \ifn \colon \ufunc(\tobj') \to \ufunc(\tobj)\) is forced.
From Definition~\ref{def:forcing}, \(\umor \ifn \in \im \ufunc\).
Hence \(\umor \in \otest'(\tobj)\).
Thus \((\uobj, \itest, \otest')\) is a \uFVtobj.

Similarly, \((\uobj, \itest', \otest)\) is a \uFVtobj.
Clearly, \((\uobj, \itest, \otest') \in \family{S}^c\).
Hence \((\uobj, \itest, \otest') \preceq (\uobj, \itest', \otest)\).
Thus \(\otest \subseteq \otest'\) and \(\itest \subseteq \itest'\); hence \((\uobj, \itest, \otest)\) is a \uVtobj.

For any \(\uFVtobj \in \family{S}\), clearly \(\itest \subseteq \itest_{\uFVtobj}\).
Then for \(\uFVtobj' \in \family{S}^c\), \(\otest_{\uFVtobj} \subseteq \otest_{\uFVtobj'}\) so \(\otest_{\uFVtobj} \subseteq \otest\).
Hence the identity on \(\uobj\) lifts to a \uVtmor \((\uobj, \itest, \otest) \to \uFVtobj\).
Similarly, for \(\uFVtobj \in \family{S}^c\), the identity on \(\uobj\) lifts to a \uVtmor \(\uFVtobj \to (\uobj, \itest, \otest)\).
We therefore have our candidate for the meet of \(\family{S}\).
It remains to show that it is a \uFVtobj.

This is similar to the arguments above for \((\uobj, \itest, \otest')\) and \((\uobj, \itest', \otest)\).
If \(\umor \colon \ufunc(\tobj) \to \uobj\) is forced for \((\tobj, (\uobj, \itest, \otest))\) then \(\umor\) is forced for \((\tobj, \uFVtobj)\) for all \(\uFVtobj \in \family{S}\).
Hence \(\umor \in \itest_{\uFVtobj}\) for all \(\uFVtobj \in \family{S}\), whence \(\umor \in \itest\).
Similarly, \(\otest\) satisfies the forcing condition.
Thus \((\uobj, \itest, \otest)\) is the meet of \(\family{S}\).

It is obvious how to adapt this to define the join of \(\family{S}\).
\end{proof}

It is interesting to see what is the maximum of \(\uFVtcat_{\uobj}\).
From the above proof, it will have test functors
\begin{align*}
\itest(\tobj) &= \Hom{\ucat}{\ufunc(\tobj)}{\uobj}, \\
\otest(\tobj) &= \bigcap_{\uFVtobj \in \uFVtcat_{\uobj}} \otest_{\uFVtobj}(\tobj).
\end{align*}
By sending a \uobj to the maximum and minimum of its fibre categories, we obtain functors from \(\ucat\) to \(\uFVtcat\).

\begin{defn}
Let \(\indfunc \colon \ucat \to \uFVtcat\) and \(\disfunc \colon \ucat \to \uFVtcat\) be the functors
\begin{align*}
\indfunc \colon \uobj &\mapsto \meet \uFVtcat_{\uobj}, \\
\disfunc \colon \uobj &\mapsto \join \uFVtcat_{\uobj}.
\end{align*}
We refer to these as, respectively, the \emph{indiscrete} and \emph{discrete} \(\uFVtcat\)\enhyp{}functors.
\end{defn}

In the following we shall use standard lattice notation.
That is, in a complete lattice \(L\), \(\top\) and \(\bot\) refer to the maximum and minimum respectively, \(\meet\) is the meet (intersection), \(\join\) is the join (union), and we use the standard interval notation, so for \(a, b \in L\) with \(a \preceq b\) we  write \([a,b]\) for \(\{c \in L : a \preceq c \preceq b\}\).
Recall that if \(f \colon L_1 \to L_2\) is an order\hyp{}preserving map then \(a \mapsto \meet f^{-1}[a, \top]\) and \(a \mapsto \join f^{-1}[\bot, a]\) are also order\hyp{}preserving. 

\subsection{Forcing Functors}
\label{sec:force}

For this section, we fix \ucat, \(\ucat\), and \tcat, \(\tcat\).
We choose two forcing conditions, \((\iforce^1, \oforce^1)\) and \((\iforce^2, \oforce^2)\), with \((\iforce^1, \oforce^1) \preceq (\iforce^2, \oforce^2)\).
These define two categories of smooth objects, \(\uAFVtcat\) and \(\uBFVtcat\), which are full subcategories of \uVtcat.

\begin{proposition}
The inclusion functor \(\uBFVtcat \to \uVtcat\) factors through \(\uAFVtcat\).
\end{proposition}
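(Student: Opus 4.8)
The plan is to reduce the claim to a statement about objects. Since both \(\uAFVtcat\) and \(\uBFVtcat\) are full subcategories of \(\uVtcat\), to produce a factorisation \(\uBFVtcat \to \uAFVtcat \hookrightarrow \uVtcat\) of the inclusion it suffices to check that every object of \(\uBFVtcat\) is already an object of \(\uAFVtcat\). The factorising functor is then just the corestriction: it is the identity on objects and sends a morphism to itself, and this is automatically functorial because the morphisms of each full subcategory are exactly the \uVtmors between the relevant objects, so nothing needs to be verified at the level of arrows.

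The heart of the matter is to unwind the hypothesis \((\iforce^1, \oforce^1) \preceq (\iforce^2, \oforce^2)\). By definition this means \(\iforce^1 \le \iforce^2\) and \(\oforce^1 \le \oforce^2\) as functors to \(\{0 \to 1\}\), that is, pointwise on objects of \(\rcat\). First I would record the immediate consequence for forced morphisms: if \(\umor \colon \ufunc(\tobj) \to \abs{\uVtobj}\) satisfies \(\iforce^1(\tobj, \uVtobj, \trials{\umor}) = 1\), then \(\iforce^2(\tobj, \uVtobj, \trials{\umor}) = 1\) as well, so anything forced for the first condition is forced for the second; and symmetrically for the output halves, using \(\oforce^1 \le \oforce^2\).

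Next I would take an arbitrary \(\uVtobj \in \uBFVtcat\), that is, one satisfying \((\iforce^2, \oforce^2)\), and verify that it satisfies \((\iforce^1, \oforce^1)\). Given a \tobj \(\tobj\) and a \umor \(\umor \colon \ufunc(\tobj) \to \abs{\uVtobj}\) that is forced for \(\iforce^1\), the observation above shows \(\umor\) is forced for \(\iforce^2\); since \(\uVtobj\) satisfies the second forcing condition, \(\umor \in \itest_{\uVtobj}(\tobj)\). The output case is identical, with \(\oforce\) in place of \(\iforce\) and \(\otest\) in place of \(\itest\). Hence \(\uVtobj\) satisfies \((\iforce^1, \oforce^1)\), so \(\uVtobj \in \uAFVtcat\), giving \(\uBFVtcat \subseteq \uAFVtcat \subseteq \uVtcat\) as required.

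There is essentially no obstacle here: the content is entirely the bookkeeping of the ordering. The one point that demands care is keeping the direction straight. The larger forcing condition \((\iforce^2, \oforce^2)\) imposes the more stringent membership requirement, since it declares more morphisms to be forced and hence demands that more of them lie in the test functors; consequently \(\uBFVtcat\) is the \emph{smaller} of the two subcategories, which is precisely why it embeds into \(\uAFVtcat\) and not the other way around.
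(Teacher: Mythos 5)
Your proof is correct and is essentially the paper's own argument: take a \uBFVtobjalt, note that anything forced by \((\iforce^1, \oforce^1)\) is forced by \((\iforce^2, \oforce^2)\) since \(\iforce^1 \le \iforce^2\) and \(\oforce^1 \le \oforce^2\) pointwise, and conclude membership in the test functors from the fact that the object satisfies the stronger condition, handling the output side symmetrically. Your added remarks on the corestriction being automatically functorial between full subcategories, and on the direction of the inclusion, are points the paper leaves implicit but do not change the argument.
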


\begin{proof}
Let \(\uBFVtobj\) be an \uBFVtobjalt.
Let \(\tobj\) be a \tobj.
Let \(\umor \colon \ufunc(\tobj) \to \forfunc[\uBFVtobj]\) be a \umor.
Suppose that \(\umor\) is forced by \((\iforce^1, \oforce^1)\).
Then \(\iforce^1(\trials{\umor}) = 1\).
Since \(\iforce^2 \ge \iforce^1\), \(\iforce^2(\trials{\umor}) = 1\) also.
Hence as \(\uBFVtobj\) is an \uBFVtobjalt, \(\umor \in \itest_{\uBFVtobj}(\tobj)\).
The same holds for \umors out of \(\uBFVtobj\), whence \(\uBFVtobj\) is an \uAFVtobjalt.
\end{proof}

\begin{defn}
We write \(\incfunc \colon \uBFVtcat \to \uAFVtcat\) for the inclusion functor.
\end{defn}

Now we shall construct a functor in the opposite direction.
Given a \uAFVtobjalt, we wish to define a ``nearest'' \uBFVtobjalt.
It is obvious that there are\emhyp{}usually\emhyp{}two choices.

\begin{defn}
Define two \emph{forcing functors} \(\uAFVtcat \to \uBFVtcat\) by
\begin{align*}
\Jforfunc \colon \uAFVtobj &\mapsto \join \incfunc^{-1} \left[\bot, \uAFVtobj\right], \\
\Mforfunc \colon \uAFVtobj & \mapsto \meet \incfunc^{-1} \left[\uAFVtobj, \top\right].
\end{align*}
\end{defn}

The idea, if not the fact, of these functors is that \(\Jforfunc(\uAFVtobj)\) should be the nearest \uBFVtobjalt below \(\uAFVtobj\) and \(\Mforfunc(\uAFVtobj)\) should be the nearest \uBFVtobjalt above \(\uAFVtobj\) (comparisions actually happening in \(\uAFVtcat\)).
However, these na\"ive expectations may not be met as it is entirely possible that, for example, \(\incfunc\Jforfunc(\uAFVtobj)\) is actually above \(\uAFVtobj\).

The restriction of \(\incfunc\) to a fibre is the inclusion of one lattice in another and the order on the first is that induced from the second.
From this we can deduce some elementary properties of \(\Jforfunc\) and \(\Mforfunc\).

\begin{lemma}
The compositions \(\Jforfunc\incfunc\) and \(\Mforfunc\incfunc\) are the identity on \(\uBFVtcat\).
For all \uAFVtobjalts[\uAFVtobj] \(\Jforfunc(\uAFVtobj) \preceq \Mforfunc(\uAFVtobj)\).
\end{lemma}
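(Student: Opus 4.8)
The plan is to work entirely inside the fibre lattices over a fixed underlying object $\uobj$, using the observation recorded just before the statement: the restriction of $\incfunc$ to a fibre is the inclusion of the complete lattice $\uBFVtcat_{\uobj}$ into the complete lattice $\uAFVtcat_{\uobj}$ (both complete by Proposition~\ref{prop:lattice}), with the order on the former induced from the latter. Consequently this restriction not only preserves order but reflects it: for $b, b' \in \uBFVtcat_{\uobj}$ one has $b \preceq b'$ precisely when $\incfunc(b) \preceq \incfunc(b')$, by the very definition of the induced order. I note at the outset that both defining expressions are joins and meets computed in $\uBFVtcat_{\uobj}$, which is the crucial point to keep in view.

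For the first assertion, I would fix $\uBFVtobj \in \uBFVtcat_{\uobj}$. By order-reflection the set $\incfunc^{-1}[\bot, \incfunc(\uBFVtobj)] = \{b : \incfunc(b) \preceq \incfunc(\uBFVtobj)\}$ is exactly the principal down-set $\{b : b \preceq \uBFVtobj\}$ of $\uBFVtcat_{\uobj}$, whose join is its greatest element $\uBFVtobj$; hence $\Jforfunc\incfunc(\uBFVtobj) = \uBFVtobj$. Replacing this down-set by the principal up-set $\{b : \uBFVtobj \preceq b\}$, whose meet is $\uBFVtobj$, gives $\Mforfunc\incfunc(\uBFVtobj) = \uBFVtobj$ by identical reasoning. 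Thus $\Jforfunc\incfunc$ and $\Mforfunc\incfunc$ are the identity on $\uBFVtcat$.

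For the second assertion, I would put $L \coloneqq \incfunc^{-1}[\bot, \uAFVtobj]$ and $U \coloneqq \incfunc^{-1}[\uAFVtobj, \top]$, so that $\Jforfunc(\uAFVtobj) = \join L$ and $\Mforfunc(\uAFVtobj) = \meet U$. The heart of the argument is a cross-comparison: for any $b_- \in L$ and $b_+ \in U$ we have $\incfunc(b_-) \preceq \uAFVtobj \preceq \incfunc(b_+)$, hence $\incfunc(b_-) \preceq \incfunc(b_+)$, and order-reflection yields $b_- \preceq b_+$ in $\uBFVtcat_{\uobj}$. Therefore each $b_+ \in U$ is an upper bound for $L$, so the least such upper bound satisfies $\join L \preceq b_+$; since this holds for every $b_+ \in U$, the element $\join L$ is a lower bound for $U$, whence $\join L \preceq \meet U$. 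The degenerate cases fall out of the usual conventions: an empty $L$ gives $\join L = \bot$ and an empty $U$ gives $\meet U = \top$, and either inequality then holds automatically.

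The only real obstacle I anticipate is that the joins and meets defining $\Jforfunc$ and $\Mforfunc$ are taken in $\uBFVtcat_{\uobj}$ and need not coincide with the corresponding operations in the ambient lattice $\uAFVtcat_{\uobj}$; a careless argument might silently compare a join computed in one lattice with a meet computed in the other. The proof sidesteps this by invoking only the defining universal properties (least upper bound, greatest lower bound) within the single lattice $\uBFVtcat_{\uobj}$, together with the single fact that the relation $\preceq$ restricts consistently---exactly the statement that $\incfunc$ embeds fibres as ordered sets.
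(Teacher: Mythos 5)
Your proof is correct and is essentially the paper's own argument: the first part uses the fact that \(\incfunc\) is an order-embedding on fibres so that \(\incfunc^{-1}\left[\bot, \incfunc(\uBFVtobj)\right]\) is the principal down-set at \(\uBFVtobj\), and the second part is the same cross-comparison of elements of \(\incfunc^{-1}[\bot,\uAFVtobj]\) against elements of \(\incfunc^{-1}[\uAFVtobj,\top]\). Your extra care about where joins and meets are computed and about the empty-family conventions is sound but adds nothing beyond the paper's proof.
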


\begin{proof}
The first comes from the fact that, as \(\incfunc\) is an inclusion, for \(\uBFVtobj\) a \uBFVtobjalt, \(\incfunc^{-1} \left[\bot, \incfunc(\uBFVtobj)\right] = \left[\bot, \uBFVtobj\right]\).
Thus \(\Jforfunc \incfunc(\uBFVtobj) = \uBFVtobj\).
The case of \(\Mforfunc\) is similar.

For the second, observe that if \(\uBFVtobj_1 \in \incfunc^{-1} [\bot, \uAFVtobj]\) and \(\uBFVtobj_2 \in \incfunc^{-1} [\uAFVtobj, \bot]\) then \(\incfunc(\uBFVtobj_1) \preceq \incfunc(\uBFVtobj_2)\).
Hence \(\uBFVtobj_1 \preceq \uBFVtobj_2\).
Thus \(\Jforfunc(\uAFVtobj) \preceq \Mforfunc(\uAFVtobj)\).
\end{proof}

We shall be particularly interested in the question of when \(\uAFVtobj \preceq \incfunc \Mforfunc(\uAFVtobj)\) and \(\incfunc \Jforfunc(\uAFVtobj) \preceq \uBFVtobj\) hold.
To do this, we need explicit descriptions of \(\incfunc \Mforfunc(\uAFVtobj)\) and \(\incfunc \Jforfunc(\uAFVtobj)\).

Let \(\uAFVtobj\) be a \uAFVtobjalt.
Let us write \(\mMo{\uAFVtobj}\) for \(\incfunc\Mforfunc(\uAFVtobj)\).
From the proof of Proposition~\ref{prop:lattice}, we see that
\[
  \itest_{\mMo{\uAFVtobj}} = \bigcap \itest_{\uAFVtobj'}
\]
where the indexing family is over \uBFVtobjalts[\uBFVtobj'] such that \(\uAFVtobj \preceq \incfunc(\uBFVtobj')\).
Thus \(\itest_{\uAFVtobj} \subseteq \itest_{\uBFVtobj'}\) and so \(\itest_{\uAFVtobj} \subseteq \itest_{\mMo{\uAFVtobj}}\).
Thus to test whether or not \(\uAFVtobj \preceq \mMo{\uAFVtobj}\) it is sufficient to test whether or not \(\otest_{\mMo{\uAFVtobj}} \subseteq \otest_{\uAFVtobj}\).
This is not guaranteed\emhyp{}we shall see some examples later\emhyp{}but we can give some conditions for when it does hold.

\begin{proposition}
\label{prop:foradj}
\begin{enumerate}
\item If the output forcing condition \(\mBo{\oforce}\) is independent of the output test functor then \(\uAFVtobj \preceq \mMo{\uAFVtobj}\) if and only if \(\mMo{\uAFVtobj} = (\forfunc[\uAFVtobj], \itest_{\mMo{\uAFVtobj}}, \otest_{\uAFVtobj})\).
\item If the output forcing conditions \(\mAo{\oforce}\) and \(\mBo{\oforce}\) are the same then \(\mMo{\uAFVtobj} = (\forfunc[\uAFVtobj], \itest_{\mMo{\uAFVtobj}}, \otest_{\uAFVtobj})\).
\end{enumerate}
\end{proposition}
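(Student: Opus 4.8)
The plan is to reduce both statements to a single assertion about the candidate triple $Y \coloneqq (\forfunc[\uAFVtobj], \itest_{\mMo{\uAFVtobj}}, \otest_{\uAFVtobj})$, namely that $Y$ is a $\uBFVtobj$. Write $\family{S}$ for the family of those $\uBFVtobj'$ with $\uAFVtobj \preceq \incfunc(\uBFVtobj')$, so that $\Mforfunc(\uAFVtobj) = \meet \family{S}$ and $\itest_{\mMo{\uAFVtobj}} = \bigcap_{\uBFVtobj' \in \family{S}} \itest_{\uBFVtobj'}$. Granting that $Y$ is a $\uBFVtobj$, the equality $\mMo{\uAFVtobj} = Y$ is pure lattice bookkeeping: since $\itest_{\uAFVtobj} \subseteq \itest_{\mMo{\uAFVtobj}} = \itest_Y$ and $\otest_Y = \otest_{\uAFVtobj}$ we have $\uAFVtobj \preceq Y$, so $Y \in \family{S}$; and $Y$ is a lower bound of $\family{S}$, because $\itest_Y = \itest_{\mMo{\uAFVtobj}} \subseteq \itest_{\uBFVtobj'}$ and, from $\uAFVtobj \preceq \uBFVtobj'$, also $\otest_{\uBFVtobj'} \subseteq \otest_{\uAFVtobj} = \otest_Y$ for every $\uBFVtobj' \in \family{S}$. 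A lower bound that is itself a member of $\family{S}$ is the least element, hence the meet, so $Y = \meet \family{S} = \mMo{\uAFVtobj}$. This already disposes of the reverse implication in the first part: if $\mMo{\uAFVtobj} = Y$ then $\otest_{\mMo{\uAFVtobj}} = \otest_{\uAFVtobj}$, whence $\otest_{\mMo{\uAFVtobj}} \subseteq \otest_{\uAFVtobj}$ and $\uAFVtobj \preceq \mMo{\uAFVtobj}$ by the criterion recorded before the statement.

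Everything therefore rests on checking that $Y$ is a $\uBFVtobj$: that $\itest_{\mMo{\uAFVtobj}}$ is compatible with $\otest_{\uAFVtobj}$, and that $Y$ is closed under $\iforce^2$ and under $\oforce^2$. Once compatibility is in hand, the input closure is the gentle step. For the first part the hypothesis $\uAFVtobj \preceq \mMo{\uAFVtobj}$ gives $\otest_{\mMo{\uAFVtobj}} \subseteq \otest_{\uAFVtobj} = \otest_Y$, so the identity on the underlying object is a $\uVtmor$ $Y \to \mMo{\uAFVtobj}$; any $\iforce^2$-forced input morphism for $Y$ is then forced for $\mMo{\uAFVtobj}$ as well, by monotonicity of forcing along $\uVtmor$s (the remark following Definition~\ref{def:forcing}), and so lies in $\itest_{\mMo{\uAFVtobj}} = \itest_Y$.

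The genuine obstacle is the output side, and this is exactly where the two hypotheses enter. In the first part, because $\mBo{\oforce}$ is independent of the output test functor, whether an output morphism is $\oforce^2$-forced for $Y$ depends only on the underlying object and $\itest_Y = \itest_{\mMo{\uAFVtobj}}$, data shared with $\mMo{\uAFVtobj}$; such a morphism is therefore $\oforce^2$-forced for $\mMo{\uAFVtobj}$, lies in $\otest_{\mMo{\uAFVtobj}}$, and hence --- using $\otest_{\mMo{\uAFVtobj}} \subseteq \otest_{\uAFVtobj}$ supplied by $\uAFVtobj \preceq \mMo{\uAFVtobj}$ --- lies in $\otest_{\uAFVtobj}$. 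The same independence, applied through the canonical $\uVtmor$s to the representables $\sfunc(\tobj')$ of Lemma~\ref{lem:tembed}, is what I would use to settle compatibility of $Y$. In the second part the output forcing conditions coincide, so $\uAFVtobj$ is already $\oforce^2$-closed on the output; here the task is to confirm that enlarging the input functor from $\itest_{\uAFVtobj}$ to $\itest_{\mMo{\uAFVtobj}}$ forces no output morphism outside $\otest_{\uAFVtobj}$, which I would verify by transporting a putative newly forced morphism along the $\uVtmor$s into $\sfunc(\tobj')$ and invoking the compatibility of $\uAFVtobj$ to return it to $\im \ufunc$. I expect this ``no new output is forced'' step, together with the compatibility of $Y$, to be the heart of the argument; the lattice reduction and the input closure are routine by comparison.
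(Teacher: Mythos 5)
Your overall strategy is exactly the paper's: reduce both parts to showing that \(Y = (\forfunc[\uAFVtobj], \itest_{\mMo{\uAFVtobj}}, \otest_{\uAFVtobj})\) is a \uBFVtobjalt, settle the identification \(Y = \mMo{\uAFVtobj}\) by lattice bookkeeping (which you spell out more carefully than the paper's ``obviously''), and observe that the reverse implication in the first part is immediate. Your treatment of the first part is essentially the paper's proof: output closure from the independence hypothesis together with \(\otest_{\mMo{\uAFVtobj}} \subseteq \otest_{\uAFVtobj}\), and input closure from the identity lifting to a \uVtmor{} \(Y \to \mMo{\uAFVtobj}\) plus monotonicity of forcing.

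The gap is in the second part, where neither closure is actually established. For the input closure you offer only the part-1 argument, but that argument needs the identity to lift to a \uVtmor{} \(Y \to \mMo{\uAFVtobj}\), i.e.\ \(\otest_{\mMo{\uAFVtobj}} \subseteq \otest_{\uAFVtobj}\), which is precisely the hypothesis \(\uAFVtobj \preceq \mMo{\uAFVtobj}\) of part 1 and is not available in part 2 (it is, in effect, part of what is being proved). The repair is short and uses a fact you already recorded in your bookkeeping paragraph: \(Y \preceq \uBFVtobj'\) for every \(\uBFVtobj' \in \family{S}\), so a \(\mBo{\iforce}\)-forced input morphism for \(Y\) is forced for each such \(\uBFVtobj'\), lies in each \(\itest_{\uBFVtobj'}\), and hence in the intersection \(\itest_{\mMo{\uAFVtobj}} = \itest_Y\); this is the paper's argument. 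For the output closure, your sketch (``transporting into \(\sfunc(\tobj')\) and invoking the compatibility of \(\uAFVtobj\)'') does not close the loop: compatibility only controls composites with input test morphisms, which characterises forcedness only when the output condition is saturation, whereas here \(\mAo{\oforce} = \mBo{\oforce}\) may be an arbitrary condition. The paper's mechanism is the mirror of your part-1 input step: the identity lifts to a \uVtmor{} \(\uAFVtobj \to Y\) (since \(\itest_{\uAFVtobj} \subseteq \itest_Y\) and \(\otest_Y = \otest_{\uAFVtobj}\)), so a \(\mBo{\oforce}\)-forced output morphism for \(Y\) is, because \(\mAo{\oforce} = \mBo{\oforce}\), forced for \(\uAFVtobj\) and hence lies in \(\otest_{\uAFVtobj}\). (Your concern about the compatibility of \(Y\) is fair but not held against you: the paper glosses over it as well.) With these two repairs your proof is complete and coincides with the paper's.
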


\begin{proof}
The key to both parts is the same: showing that \((\forfunc[\uAFVtobj], \itest_{\mMo{\uAFVtobj}}, \otest_{\uAFVtobj})\) is a \uBFVtobjalt.
Once this is shown, it is obviously \(\mMo{\uAFVtobj}\).
In particular, in the first condition the reverse implication is obvious.
Let us consider the conditions in turn.

\begin{enumerate}
\item If \(\uAFVtobj \preceq \mMo{\uAFVtobj}\) then \(\mMo{\uAFVtobj}\) is a \uBFVtobjalt with the property that \(\otest_{\mMo{\uAFVtobj}}(\tobj) \subseteq \otest_{\uAFVtobj}(\tobj)\) for all \tobjs[\tobj].
Suppose that \(\umor \colon \forfunc[\uAFVtobj] \to \tobj\) is forced (via \(\mBo{\oforce}\)) for \((\forfunc[\uAFVtobj], \itest_{\mMo{\uAFVtobj}}, \otest_{\uAFVtobj})\).
By assumption, it is therefore also forced for \((\forfunc[\uAFVtobj], \itest_{\mMo{\uAFVtobj}}, \otest_{\mMo{\uAFVtobj}}) = \mMo{\uAFVtobj}\).
Hence \(\umor \in \otest_{\mMo{\uAFVtobj}}(\tobj)\) whence \(\umor \in \otest_{\uAFVtobj}(\tobj)\).

Now consider the input side.
Suppose that \(\umor \colon \tobj \to \forfunc[\uAFVtobj]\) is forced (via \(\mBo{\iforce}\)) for \((\forfunc[\uAFVtobj], \itest_{\mMo{\uAFVtobj}}, \otest_{\uAFVtobj})\).
Then as the identity on \(\forfunc[\uAFVtobj]\) lifts to a \uVtmor \(\uVtmor \colon (\forfunc[\uAFVtobj], \itest_{\mMo{\uAFVtobj}}, \otest_{\uAFVtobj}) \to \mMo{\uAFVtobj}\), \(\umor = \forfunc[\uVtmor]\umor\) is forced for \(\mMo{\uAFVtobj}\).
Thus as \(\mMo{\uAFVtobj}\) is a \uBFVtobjalt, \(\umor \in \itest_{\mMo{\uAFVtobj}}(\tobj)\).

Hence \((\forfunc[\uAFVtobj], \itest_{\mMo{\uAFVtobj}}, \otest_{\uAFVtobj})\) is a \uBFVtobjalt.

\item Suppose that  \(\umor \colon \forfunc[\uAFVtobj] \to \tobj\) is forced (via \(\mBo{\oforce}\)) for \((\forfunc[\uAFVtobj], \itest_{\mMo{\uAFVtobj}}, \otest_{\uAFVtobj})\).
By assumption, it is therefore also forced via \(\mAo{\oforce}\).
Since the identity on \(\forfunc[\uAFVtobj]\) lifts to a \uVtmor \(\uVtmor \colon \uAFVtobj \to (\forfunc[\uAFVtobj], \itest_{\mMo{\uAFVtobj}}, \otest_{\uAFVtobj})\), \(\umor = \umor \forfunc[\uVtmor]\) is forced for \(\uAFVtobj\).
Thus as \(\mMo{\uAFVtobj}\) is a \uAFVtobjalt, \(\umor \in \otest_{\uAFVtobj}(\tobj)\).

Now consider the input side.
Observe that if \(\uBFVtobj'\) is a \uBFVtobjalt such that \(\uAFVtobj \preceq \uBFVtobj'\) then \((\forfunc[\uAFVtobj], \itest_{\mMo{\uAFVtobj}}, \otest_{\uAFVtobj}) \preceq \uBFVtobj'\) (comparisions in \(\uVtcat\) for simplicity).
Therefor if \(\umor \colon \tobj \to \forfunc[\uAFVtobj]\) is forced for \((\forfunc[\uAFVtobj], \itest_{\mMo{\uAFVtobj}}, \otest_{\uAFVtobj})\) then it is forced for \(\uBFVtobj'\).
Hence it is in \(\itest_{\mMo{\uAFVtobj}}(\tobj)\) as this is the intersection of the corresponding \(\itest_{\uBFVtobj'}(\tobj)\).

Hence \((\forfunc[\uAFVtobj], \itest_{\mMo{\uAFVtobj}}, \otest_{\uAFVtobj})\) is a \uBFVtobjalt. \qedhere
\end{enumerate}
\end{proof}

A particular example of when the first condition holds is when the output forcing condition is saturation.
The second example shows that if we change the forcing conditions one component at a time then we get good control over how the changes occur.

There are obvious analogues for the input forcing condition.

Let us conclude this section with an example of when \(\mJo{\uAFVtobj} = \Jforfunc(\uAFVtobj) \preceq \uAFVtobj\) fails.
In this example, \ucat is \xcat and \tcat is the category of open subsets of \R with \cimaps between them.
Both forcing conditions have saturation as output forcing condition.
The weaker forcing condition has no input forcing condition whilst the stronger has the sheaf condition.
As the output forcing conditions are saturation, both an \uAFVtobjalt and an \uBFVtobjalt are determined by their input test functions.

Consider the \uAFVtobjalt[\uAFVtobj] with \(\forfunc[\uAFVtobj] = \R\) and \(\itest_{\uAFVtobj}(\tobj)\) those \cimaps \(\ifn \colon \tobj \to \R\) which are bounded.
To find \(\mJo{\uAFVtobj}\) we take the join of all \uBFVtobjalts below \(\uAFVtobj\) with the same \uobj.
For any bounded open interval, \(I \subseteq \R\), we can define a \uBFVtobjalt[\uBFVtobj_I] with \(\forfunc[\uBFVtobj_I] = \R\) and \(\itest_{\uBFVtobj_I}(\tobj)\) those \cimaps \(\ifn \colon \tobj \to \R\) which factor through \(I\).
This is a \uBFVtobjalt.
(Note that we have not assumed the constant forcing condition on inputs, if we had we would have to include constant maps but this makes so substantial difference to the example.)
Moreover, this \uBFVtobjalt is below \(\uAFVtobj\).
However, the identity on \R is locally in some \(\uBFVtobj_I\) and so, because of the sheaf condition, is in the join of the \(\uBFVtobj_I\).
Hence \(\mJo{\uAFVtobj}\) is at least the \uBFVtobjalt with input test functor \(\itest(\tobj) = \Ci(\tobj,\R)\).
In fact, it is exactly that.

Thus \(\mJo{\uAFVtobj} \not\preceq \uAFVtobj\).

\subsection{Change of Test Spaces}

In this section we wish to examine what happens when we change the test spaces.
\ucatu remains the same and we choose two categories of test spaces, \(\Atcat\) and \(\Btcat\).
We therefore obtain \uVAtcat, \(\uVAtcat\), and \(\uVBtcat\).

As remarked at the start of this section, since we are viewing our \tcats as being subcategories of \mcat, we restrict to the case where \(\Atcat\) is a subcategory of \(\Btcat\).
Restriction of the test functors defines an obvious functor \(\resfunc \colon \uVBtcat \to \uVAtcat\).

As the forcing condition depends slightly on \tcat, we must consider how to relate the two.
Clearly, we wish to meddle with this as little as possible since we can apply a forcing functor afterwards.

Consider \rcats, \(\Arcat\) and \(\Brcat\).
There is an obvious inclusion functor \(\Arcat \to \Brcat\) induced from the inclusion \(\Atcat \to \Btcat\).
We shall not give this a symbol, leaving to context the r\^ole of distinguishing.
This has the property that if \(\tobj\) is a \tobj in \(\Atcat\), \(\uVtobj\) a \uVtobj, and \(\umor \colon \ufunc(\tobj) \to \forfunc[\uVtobj]\) a \umor, then \(\trials[_1]{\umor} \subseteq \trials[_2]{\umor}\).

Let \((\iforce^2, \oforce^2)\) be a forcing condition with respect to \(\Btcat\).
Then we can restrict \(\iforce^2\) and \(\oforce^2\) to \(\Arcat\).
Let us call the resulting functors \(\iforce^1\) and \(\oforce^1\).
If, for \(\tobj_1\) and \(\tobj_2\) in \(\Atcat\), \(\umor \colon \ufunc(\tobj_1) \to \ufunc(\tobj_2)\) is not in the image of \(\ufunc\) then \(\iforce^1(\trials[_1]{\umor}) = \iforce^2(\trials[_1]{\umor}) \le \iforce^2(\trials[_2]{\umor}) = 0\), and similarly for \(\oforce^1\).
Hence \((\iforce^1, \oforce^1)\) is a forcing condition.

Thus we have \(\uFVAtcat\), and \(\uFVBtcat\).

\begin{proposition}
The restriction of a \uFVBtobj is a \uFVAtobj.
\end{proposition}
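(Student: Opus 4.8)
The plan is to run the forcing condition through the two facts about the inclusion \(\Arcat \to \Brcat\) that were recorded just before the statement. Since \(\resfunc\) alters neither the underlying \uobj nor the values of the test subfunctors on objects of \(\Atcat\), proving that \(\resfunc(\uVtobj)\) lies in \(\uFVAtcat\) reduces to the following: for every \tobj \(\tobj\) of \(\Atcat\), any \umor to or from \(\forfunc[\uVtobj]\) that is forced by the restricted condition \((\iforce^1, \oforce^1)\) is already forced by \((\iforce^2, \oforce^2)\). Membership in the correct test set then follows at once, because \(\uVtobj\) satisfies \((\iforce^2, \oforce^2)\) and, for \(\tobj\) in \(\Atcat\), the test sets of \(\resfunc(\uVtobj)\) are by definition those of \(\uVtobj\). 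I would treat the input side in full; the output side is identical after the flip.

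Concretely, I would take a \uFVBtobj \(\uVtobj\), a \tobj \(\tobj\) of \(\Atcat\), and a \umor \(\umor \colon \ufunc(\tobj) \to \forfunc[\uVtobj]\) forced for \(\resfunc(\uVtobj)\) by \(\iforce^1\), i.e. with \(\iforce^1(\tobj, \resfunc(\uVtobj), \trials[_1]{\umor}) = 1\). The inclusion \(\Arcat \to \Brcat\) lets me regard \(\trials[_1]{\umor}\) as a family of trials from \(\tobj\) to \(\uVtobj\) in \(\Brcat\), and by construction \(\iforce^1\) is exactly \(\iforce^2\) evaluated there, so \(\iforce^2(\tobj, \uVtobj, \trials[_1]{\umor}) = 1\). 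I would then invoke the containment \(\trials[_1]{\umor} \subseteq \trials[_2]{\umor}\): a containment of trial families is a morphism in \(\Brcat\), so functoriality of \(\iforce^2\) (equivalently, the monotonicity remark after Definition~\ref{def:forcing}) yields \(\iforce^2(\tobj, \uVtobj, \trials[_2]{\umor}) = 1\). That is, \(\umor\) is forced for \(\uVtobj\) by \(\iforce^2\), so since \(\uVtobj\) is a \uFVBtobj one concludes \(\umor \in \itest_{\uVtobj}(\tobj) = \itest_{\resfunc(\uVtobj)}(\tobj)\), as wanted.

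The one place to be careful (and hence the step I expect to need the most attention) is the interplay between the containment \(\trials[_1]{\umor} \subseteq \trials[_2]{\umor}\) and the restriction identity for \(\iforce^1\): one must check that the inclusion \(\Arcat \to \Brcat\) sends the data \((\tobj, \resfunc(\uVtobj), \trials[_1]{\umor})\) to data lying below \((\tobj, \uVtobj, \trials[_2]{\umor})\), so that monotonicity pushes in the direction we need. Conceptually this is just the observation that enlarging the test category can only add trials and can only raise the value of the forcing functor; once that direction is pinned down, the remainder is the bookkeeping that \(\resfunc\) is literal restriction of the test subfunctors and that \((\iforce^1, \oforce^1)\) was defined as the literal restriction of \((\iforce^2, \oforce^2)\).
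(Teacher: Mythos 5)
Your proof is correct and follows essentially the same route as the paper's: both pass from \(\iforce^1(\trials[_1]{\umor}) = 1\) to \(\iforce^2(\trials[_1]{\umor}) = 1\) by the definition of the restricted condition, then use the containment \(\trials[_1]{\umor} \subseteq \trials[_2]{\umor}\) together with monotonicity of sufficiency under enlargement of the trial family to conclude \(\umor \in \itest_{\uFVBtobj}(\tobj)\), which persists under restriction since \(\tobj\) lies in \(\Atcat\). The only difference is cosmetic: you spell out the monotonicity step (a morphism in \(\Brcat\) induced by the containment) that the paper invokes implicitly.
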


\begin{proof}
Let \(\uFVBtobj\) be a \uFVBtobj.
Let us consider the input test functor.
Let \(\Atobj\) be a \Atobj in \(\Atcat\).
Let \(\umor \colon \ufunc(\tobj) \to \forfunc[\uFVBtobj]\) be a \umor which is forced via \(\iforce^1\).
Then \(\iforce^1(\trials[_1]{\umor}) = 1\).
By definition, therefore, \(\iforce^2(\trials[_1]{\umor}) = 1\).
Since \(\trials[_1]{\umor} \subseteq \trials[_2]{\umor}\), \(\umor\) is forced via \(\iforce^2\).
Hence \(\umor \in \itest_{\uFVBtobj}\).
Since the source \(\umor\) is a \Atobj from \(\Atcat\), it persists in the restriction.
Hence the restriction of \(\uFVBtobj\) is again a \uFVAtobj.
\end{proof}

Since \(\resfunc\) preserves the underlying \uobjs, for each \uobj[\uobj] it restricts to a functor, i.e.\ an order\hyp{}preserving map, \(\uFVBtcat_{\uobj} \to \uFVAtcat_{\uobj}\).
We can therefore define two reverse functors using the lattice structure of \(\uFVBtcat_{\uobj}\) in a similar fashion to the change of forcing condition functors.

\begin{defn}
Define two \emph{extension functors} \(\uFVAtcat \to \uFVBtcat\) by
\begin{align*}
  \Jextfunc \colon \uFVAtobj &\mapsto \join \resfunc^{-1}\left[\bot,\uFVAtobj\right], \\
  \Mextfunc \colon \uFVAtobj &\mapsto \meet \resfunc^{-1}\left[\uFVAtobj,\top\right].
\end{align*}
\end{defn}

In studying functors from \(\uFVAtcat\) to \(\uFVBtcat\) one encounters an obvious question: if \(\Btobj\) is a \Btobj in \(\Btcat\) that is \emph{not} in \(\Atcat\), which \umors \(\ufunc(\tobj) \to \forfunc[\uFVAtobj]\) should be included?
The functors \(\Mextfunc\) and \(\Jextfunc\) are intended to give, respectively, the minimum and maximum answers to this question.
However, as with the forcing functors, these intentions are not always carried out.

Under a mild assumption on the relationship between \(\Atcat\) and \(\Btcat\) we can factor \(\Jextfunc\) and \(\Mextfunc\) through \(\Jforfunc\) and \(\Mforfunc\) respectively.
This assumption is closely related to the concept of an \emph{adequate} subcategory as studied in \cite{ji3}.
In essence, it says that when looking in \(\ucat\), \(\Btcat\) can be determined by looking at morphisms to and from \Atobjalts.

\begin{defn}
We say that \(\Atcat\) is \emph{\(\ucat\)\enhyp{}adequate} in \(\Btcat\) if, in \(\ucat\), it determines \(\im \ufunc\).
That is to say, if \(\Btobj_1\) and \(\Btobj_2\) are \Btobjalts and \(\umor \colon \ufunc(\tobj_1) \to \ufunc(\tobj_2)\) is a \umor which is \emph{not} in the image of \(\ufunc\) then there are \Atobjalts \(\Atobj_1'\) and \(\Atobj_2'\) and morphisms \(\Btmor_1 \colon \Atobj_1' \to \Btobj_1\), \(\Btmor_2 \colon \Btobj_2 \to \Atobj_2'\) such that \(\ufunc(\Btmor_2) \umor \ufunc(\Btmor_1)\) is not in the image of \(\ufunc\).
\end{defn}

In the following we shall assume that this condition holds.
This allows us to extend the embedding of \(\Atcat\) in \(\uAFVtcat\) to the whole of \(\Btcat\).
As this is an extension of \(\sfunc \colon \Atcat \to \uAFVtcat\) we shall use the same symbol.

\begin{lemma}
\label{lem:tembedx}
There is a functor \(\sfunc \colon \Btcat \to \uVAtcat\) which embeds \(\Btcat\) as a full subcategory of \(\uVAtcat\).
The \uVAtobjalt[\sfunc(\Btobj)] has underlying \uobj[\ufunc(\Btobj)],  \itest \(\itest(\Atobj') = \Hom{\Btcat}{\Atobj'}{\Btobj}\), and \otest \(\otest(\Atobj') = \Hom{\Btcat}{\Btobj}{\Atobj'}\).
On morphisms, \(\sfunc(\Btmor)\) is determined by the requirement that \(\forfunc[\sfunc(\Btmor)] = \ufunc(\Btmor)\).

The restriction of this functor to \(\Atcat\) is the functor from Lemma~\ref{lem:tembed}.
\end{lemma}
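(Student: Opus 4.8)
The plan is to build $\sfunc$ by the very formula of Lemma~\ref{lem:tembed}, only now reading all hom-sets in the larger category $\Btcat$ while still indexing the test functors by objects of the smaller category $\Atcat$. On objects I set $\sfunc(\Btobj) \coloneqq (\ufunc(\Btobj), \itest, \otest)$ with $\itest(\Atobj') = \Hom{\Btcat}{\Atobj'}{\Btobj}$ and $\otest(\Atobj') = \Hom{\Btcat}{\Btobj}{\Atobj'}$, each regarded as a subset of the appropriate $\ucat$-hom-set via the faithful functor $\ufunc$. The first task is to check that $\sfunc(\Btobj)$ is genuinely a $\uVAtobj$. That $\itest$ and $\otest$ are subfunctors of the respective hom-functors is immediate from functoriality of $\ufunc$ together with its faithfulness, the latter turning the comparison maps into honest subset inclusions as demanded by Remark~\ref{rk:presmth}. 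The compatibility condition is where I would use that the test categories are full subcategories of $\mcat$: given $\ifn \in \itest(\Atobj) = \Hom{\Btcat}{\Atobj}{\Btobj}$ and $\ofn \in \otest(\Atobj') = \Hom{\Btcat}{\Btobj}{\Atobj'}$, the composite $\ofn\ifn$ is a $\Btcat$-morphism between the $\Atcat$-objects $\Atobj$ and $\Atobj'$, and since $\Atcat$ is full in $\Btcat$ it already lies in $\Hom{\Atcat}{\Atobj}{\Atobj'}$, which is exactly what compatibility requires.

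Next I would dispatch functoriality and faithfulness, both routine. Since a morphism $\Btmor \colon \Btobj_1 \to \Btobj_2$ of $\Btcat$ sends $\ifn \in \Hom{\Btcat}{\Atobj'}{\Btobj_1}$ to $\Btmor\ifn \in \Hom{\Btcat}{\Atobj'}{\Btobj_2}$, and dually on the output side, the $\ucat$-morphism $\ufunc(\Btmor)$ indeed underlies a morphism of $\uVAtcat$, so the lift $\sfunc(\Btmor)$ with $\forfunc[\sfunc(\Btmor)] = \ufunc(\Btmor)$ exists and is unique. Because the forgetful functor $\forfunc$ is faithful and $\forfunc[\sfunc(-)] = \ufunc(-)$, the functor laws for $\sfunc$ descend from those for $\ufunc$, and faithfulness of $\sfunc$ is likewise inherited from faithfulness of $\ufunc$.

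The hard part will be fullness, and this is exactly where the $\ucat$-adequacy hypothesis earns its keep. Given a morphism $\uVtmor \colon \sfunc(\Btobj_1) \to \sfunc(\Btobj_2)$ of $\uVAtcat$ with underlying $\ucat$-morphism $\umor = \forfunc[\uVtmor]$, I must produce $\Btmor \in \Hom{\Btcat}{\Btobj_1}{\Btobj_2}$ with $\ufunc(\Btmor) = \umor$, whereupon faithfulness of $\forfunc$ forces $\sfunc(\Btmor) = \uVtmor$. I would argue by contradiction: if $\umor \notin \im\ufunc$, adequacy supplies objects $\Atobj_1', \Atobj_2'$ of $\Atcat$ and morphisms $\Btmor_1 \colon \Atobj_1' \to \Btobj_1$, $\Btmor_2 \colon \Btobj_2 \to \Atobj_2'$ of $\Btcat$ with $\ufunc(\Btmor_2)\,\umor\,\ufunc(\Btmor_1) \notin \im\ufunc$. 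But $\ufunc(\Btmor_1) \in \itest_{\sfunc(\Btobj_1)}(\Atobj_1')$, so the defining property of a $\uVAtcat$-morphism gives $\umor\,\ufunc(\Btmor_1) \in \itest_{\sfunc(\Btobj_2)}(\Atobj_1') = \Hom{\Btcat}{\Atobj_1'}{\Btobj_2}$; that is, $\umor\,\ufunc(\Btmor_1) = \ufunc(\psi)$ for some $\psi \colon \Atobj_1' \to \Btobj_2$ in $\Btcat$. Post-composing, $\ufunc(\Btmor_2)\,\umor\,\ufunc(\Btmor_1) = \ufunc(\Btmor_2\psi) \in \im\ufunc$, a contradiction. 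Hence $\umor \in \im\ufunc$ and fullness follows. (Note that this argument only invokes the input test functions together with adequacy.)

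Finally, restricting $\sfunc$ to $\Atcat$ recovers Lemma~\ref{lem:tembed}: for an object $\Atobj$ of $\Atcat$ the test functors of $\sfunc(\Atobj)$ are $\Atobj' \mapsto \Hom{\Btcat}{\Atobj'}{\Atobj}$ and $\Atobj' \mapsto \Hom{\Btcat}{\Atobj}{\Atobj'}$, which coincide with $\Hom{\Atcat}{\Atobj'}{\Atobj}$ and $\Hom{\Atcat}{\Atobj}{\Atobj'}$ by fullness of $\Atcat$ in $\Btcat$, matching the earlier construction exactly; and the same identity $\forfunc[\sfunc(-)] = \ufunc(-)$ pins down the action on morphisms. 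Thus the main obstacle throughout is the fullness step, and it is precisely the adequacy hypothesis — unavailable in the bare setting of Lemma~\ref{lem:tembed} — that makes it go through.
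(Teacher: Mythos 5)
Your proof is correct and takes essentially the same approach as the paper: the paper likewise treats everything except fullness as routine and proves fullness by the identical contradiction argument from the \(\ucat\)\enhyp{}adequacy hypothesis. The only cosmetic difference is that you push the composite through the input test functions of \(\sfunc(\Btobj_2)\) directly, whereas the paper observes via Lemma~\ref{lem:tembed} that \(\Btmor_1\) and \(\Btmor_2\) underlie morphisms involving \(\sfunc\) of \Atobjalts and then invokes fullness on \(\Atcat\) — the same computation in different clothing.
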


\begin{proof}
The only part we need to worry about is showing that \(\Btcat\) embeds as a \emph{full} subcategory of \(\AuVtcat\).
This is where we need our assumption.

We know that it is true when restricted to \(\Atcat\).
Let \(\Btobj_1\) and \(\Btobj_2\) be \Btobjalts.
Let \(\umor \colon \forfunc[\Btobj_1] \to \forfunc[\Btobj_2]\) be a \umor which is not in the image of \(\ufunc\).
Then, by assumption, there are \Btmors \(\Btmor_1 \colon \Atobj_1' \to \Btobj_1\) and \(\Btmor_2 \colon \Btobj_2 \to \Atobj_2'\) such that \(\ufunc(\Btmor_2) \umor \ufunc(\Btmor_1)\) is not in the image of \(\ufunc\).
However from Lemma~\ref{lem:tembed}, \(\Btmor_1\) underlies a morphism of \uVBtobjalts, \(\sfunc(\Atobj_1') \to \sfunc(\Btobj_1)\); similarly for \(\Btmor_2\).
Thus if \(\umor\) underlay a \uVBtmor, we would have that \(\ufunc(\Btmor_2)\umor \ufunc(\Btmor_1)\) underlay a \uVBtmor from \(\sfunc(\Atobj_1')\) to \(\sfunc(\Atobj_2')\).
As these come from \(\Atcat\), we know that this would mean that \(\ufunc(\Btmor_2)\umor \ufunc(\Btmor_1)\) lay in the image of \(\ufunc\), a contradiction.
Hence \(\sfunc\) is full.
\end{proof}

Let us write \(\PJextfunc\) and \(\PMextfunc\) for the extension functors \(\uVAtcat \to \uVBtcat\); i.e.\ when there are no forcing conditions.
Under our assumption on the relationship of \(\Atcat\) to \(\Btcat\) we can give explicit descriptions of \(\PJextfunc(\uVAtobj)\) and \(\PMextfunc(\uVAtobj)\).

\begin{proposition}
\label{prop:extpre}
Let \(\uVAtobj\) be a \uVAtobj.
For \(\Btobj\) a \Btobjalt, define
\begin{align*}
  \itest_a(\Btobj) &\coloneqq \Hom{\uVAtcat}{\sfunc(\Btobj)}{\uVAtobj}, \\
  \otest_a(\Btobj) &\coloneqq \{\ofn \colon \forfunc[\uVAtobj] \to \ufunc(\Btobj) : \ofn = \ufunc(\Btmor)\ofn' \text{ for some } \Atobj' \in \Atcat, \ofn' \in \otest_{\uVAtobj}(\Atobj'), \Btmor \colon \Btobj \to \Atobj'\}, \\
  \itest_b(\Btobj) &\coloneqq \{\ifn \colon \ufunc(\Btobj) \to \forfunc[\uVAtobj] : \ifn = \ifn' \ufunc(\Btmor) \text{ for some } \Atobj' \in \Atcat, \ifn' \in \itest_{\uVAtobj}(\Atobj'), \Btmor \colon \Atobj' \to \Btobj\}, \\
  \otest_b(\Btobj) &\coloneqq \Hom{\uVAtcat}{\uVAtobj}{\sfunc(\Btobj)}.
\end{align*}
Then
\begin{align*}
\PJextfunc(\uVAtobj) &= (\forfunc[\uVAtobj], \itest_a, \otest_a), \text{ and } \\
\PMextfunc(\uVAtobj) &= (\forfunc[\uVAtobj], \itest_b, \otest_b).
\end{align*}
\end{proposition}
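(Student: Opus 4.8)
The plan is to read off both formulas from the universal properties of the join and meet in the fibre lattice \(\uVBtcat_{\forfunc[\uVAtobj]}\) (a complete lattice by Proposition~\ref{prop:lattice}), rather than by grinding through the explicit construction of those lattice operations. For \(\PMextfunc\) I would show that \(W := (\forfunc[\uVAtobj], \itest_b, \otest_b)\) is the \emph{least} element of \(\resfunc^{-1}[\uVAtobj, \top]\): once \(W\) is shown to lie in this set and to sit below every member of it, it is forced to be the meet. The case of \(\PJextfunc\) is formally dual --- one exhibits \(V := (\forfunc[\uVAtobj], \itest_a, \otest_a)\) as the \emph{greatest} element of \(\resfunc^{-1}[\bot, \uVAtobj]\) --- so I would treat \(\PMextfunc\) in full and indicate the dual only briefly.

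The first step is to check that \(W\) really is a \uVBtobj. That \(\itest_b\) and \(\otest_b\) are subfunctors of the appropriate hom\hyp{}functors is routine: \(\itest_b\) is a set of composites \(\ifn'\,\ufunc(\Btmor)\) and so is closed under further precomposition with morphisms from \(\ufunc\), while \(\otest_b(\Btobj) = \Hom{\uVAtcat}{\uVAtobj}{\sfunc(\Btobj)}\) is functorial in \(\Btobj\) through \(\sfunc\) (Lemma~\ref{lem:tembedx}). The substantive point, and what I expect to be the main obstacle, is the compatibility condition. Given \(\ifn \in \itest_b(\Btobj)\), say \(\ifn = \ifn'\,\ufunc(\Btmor)\) with \(\ifn'\) an input test function of \(\uVAtobj\) on some \(\Atobj' \in \Atcat\), and given \(\ofn \in \otest_b(\Btobj')\), I would use Lemma~\ref{lem:tembed} to view \(\ifn'\) as a morphism \(\sfunc(\Atobj') \to \uVAtobj\) in \(\uVAtcat\) and \(\ofn\) as a morphism \(\uVAtobj \to \sfunc(\Btobj')\); their composite \(\ofn\ifn'\) is then a morphism \(\sfunc(\Atobj') \to \sfunc(\Btobj')\) in \(\uVAtcat\), which by the \emph{fullness} of \(\sfunc\) (Lemma~\ref{lem:tembedx}) comes from a \(\Btcat\)\hyp{}morphism and so lies in \(\im\ufunc\). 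Hence \(\ofn\ifn = (\ofn\ifn')\,\ufunc(\Btmor) \in \im\ufunc\), as required. It is precisely here that the \(\ucat\)\enhyp{}adequacy hypothesis is indispensable; everything else is lattice bookkeeping together with Lemma~\ref{lem:tembed}.

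Next I would compute \(\resfunc(W)\). On an object \(\Atobj\) of \(\Atcat\) the generators of \(\itest_b\) collapse to \(\itest_{\uVAtobj}(\Atobj)\) (take the identity for the containment \(\supseteq\), functoriality of \(\itest_{\uVAtobj}\) for \(\subseteq\)), and \(\otest_b(\Atobj) = \Hom{\uVAtcat}{\uVAtobj}{\sfunc(\Atobj)} = \otest_{\uVAtobj}(\Atobj)\) by Lemma~\ref{lem:tembed}. Thus \(\resfunc(W) = \uVAtobj\), so in particular \(W \in \resfunc^{-1}[\uVAtobj, \top]\). To see \(W\) is a lower bound of this set, take any \(\uVBtobj'\) with \(\uVAtobj \preceq \resfunc(\uVBtobj')\). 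For the inputs, each generator \(\ifn'\,\ufunc(\Btmor)\) of \(\itest_b\) has \(\ifn' \in \itest_{\uVAtobj}(\Atobj') \subseteq \itest_{\resfunc(\uVBtobj')}(\Atobj') = \itest_{\uVBtobj'}(\Atobj')\), and functoriality then places the composite in \(\itest_{\uVBtobj'}(\Btobj)\); hence \(\itest_b \subseteq \itest_{\uVBtobj'}\). For the outputs, an \(\ofn \in \otest_{\uVBtobj'}(\Btobj)\) is by Lemma~\ref{lem:tembed} a morphism from \(\uVBtobj'\) to the image of \(\Btobj\) under the embedding of \(\Btcat\) into \(\uVBtcat\); applying \(\resfunc\) (which carries that embedding to \(\sfunc\)) and precomposing with the lift \(\uVAtobj \to \resfunc(\uVBtobj')\) of the identity --- available since \(\uVAtobj \preceq \resfunc(\uVBtobj')\) --- exhibits \(\ofn\) as an element of \(\Hom{\uVAtcat}{\uVAtobj}{\sfunc(\Btobj)} = \otest_b(\Btobj)\); hence \(\otest_{\uVBtobj'} \subseteq \otest_b\). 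Together these give \(W \preceq \uVBtobj'\).

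Being a member of \(\resfunc^{-1}[\uVAtobj, \top]\) that lies below every member, \(W\) is its least element and therefore equals \(\meet \resfunc^{-1}[\uVAtobj, \top] = \PMextfunc(\uVAtobj)\). The corresponding statement for \(\PJextfunc\) follows by the evident dualisation: one checks that \(V = (\forfunc[\uVAtobj], \itest_a, \otest_a)\) is a \uVBtobj (again the compatibility step uses fullness of \(\sfunc\)), that \(\resfunc(V) = \uVAtobj\), and that \(V\) lies \emph{above} every \(\uVBtobj'\) with \(\resfunc(\uVBtobj') \preceq \uVAtobj\), so that \(V = \join \resfunc^{-1}[\bot, \uVAtobj]\).
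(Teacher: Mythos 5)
Your proof is correct and follows essentially the same route as the paper's: exhibit the explicit triple as a \uVBtobj (with the compatibility check resting on fullness of \(\sfunc\), i.e.\ on \(\ucat\)\enhyp{}adequacy), verify that \(\resfunc\) returns \(\uVAtobj\), and show it is extremal in the relevant preimage interval, hence the meet or join. The only difference is cosmetic: the paper works out the join case \((\itest_a, \otest_a)\) in full and appeals to symmetry for the meet, whereas you do the meet case in full and dualise.
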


\begin{proof}
There is an obvious symmetry here so we shall concentrate on \((\forfunc[\uVAtobj], \itest_a, \otest_a)\).
Let us start by showing that this is a \uVBtobj.
It is clear that the test functors are subfunctors of the requisite hom\hyp{}functors.
Therefore we just need to check the compatibility condition.
Let \(\ifn \in \itest_a(\Btobj)\) and \(\ofn \in \otest_a(\Btobj')\).
Then \(\ofn = \ufunc(\tmor) \ofn'\) for some \(\Atobj'' \in \Atcat\), \(\ofn' \in \otest_{\uVAtobj}(\Atobj'')\), and \(\tmor \colon \Btobj' \to \Atobj''\).
As \(\ofn' \in \otest_{\uVAtobj}(\Atobj'')\) it underlies a \uVBtmor \(\uVAtobj \to \sfunc(\Atobj'')\).
Hence \(\ofn' \ifn\) underlies a \uVBtmor \(\sfunc(\Btobj) \to \sfunc(\Atobj'')\) which, as \(\sfunc\) is full, comes from a morphism in \(\Btcat\).
Hence \(\ofn \ifn = \ufunc(\Btmor) \ofn' \ifn \in \im \ufunc\).

For \(\Atobj\) an \Atobjalt,
\[
  \itest_a(\Atobj) = \Hom{\uVAtcat}{\sfunc(\Atobj)}{\uVAtobj} = \itest_{\uVAtobj}(\Atobj),
\]
whilst in \(\otest_a(\Atobj)\) we can take \(1_{\Atobj}\) as the auxilliary morphism whence \(\otest_a(\Atobj) = \otest_{\uVAtobj}(\Atobj)\).
Hence
\[
  \resfunc(\forfunc[\uVAtobj], \itest_a, \otest_a) = \uVAtobj.
\]
We therefore have \((\forfunc[\uVAtobj], \itest_a, \otest_a) \preceq \PJextfunc(\uVAtobj)\).

Let \(\uVAtobj'\) be a \uVAtobjalt with \(\forfunc[\uVAtobj'] = \forfunc[\uVAtobj]\) such that \(\resfunc(\uVAtobj') \preceq \uVAtobj\).
Then for \(\Atobj\) an \Atobjalt, \(\itest_{\uVAtobj'}(\Atobj) \subseteq \itest_{\uVAtobj}(\Atobj)\) and \(\otest_{\uVAtobj'}(\Atobj) \supseteq \otest_{\uVAtobj}(\Atobj)\).
Let \(\Btobj'\) be an \Btobjalt.
Let \(\ofn \in \otest_{\uVAtobj}(\Atobj)\) and \(\Btmor \colon \Atobj \to \Btobj'\) a \Btmor.
Then \(\ofn \in \otest_{\uVAtobj'}(\Atobj)\) so, by functorality, \(\ufunc(\Btmor) \ofn \in \otest_{\uVBtobj'}(\Atobj)\).
Hence \(\otest_{\uVAtobj'}(\Btobj') \supseteq \otest_a(\Btobj')\).
Let \(\ifn \in \itest_{\uVAtobj'}(\Btobj')\).
Let \(\Btmor \colon \Atobj \to \Btobj'\) be a \Btmor.
Then \(\ifn \ufunc(\Btmor) \in \itest_{\uVAtobj'}(\Atobj) \subseteq \itest_{\uVAtobj}(\Atobj)\).
Hence \(\ifn \colon \ufunc(\Btobj') \to \forfunc[\uVAtobj]\) underlies a \uVBtmor \(\sfunc(\Btobj') \to \uVAtobj\), whence \(\ifn \in \itest_a(\Btobj')\).
Thus \(\itest_{\uVAtobj'}(\Btobj') \subseteq \itest_a(\Btobj')\).
Hence \(\uVAtobj' \preceq (\forfunc[\uVAtobj], \itest_a, \otest_a)\).

Thus \((\forfunc[\uVAtobj], \itest_a, \otest_a) = \join \resfunc^{-1}\left[\bot, \uVAtobj\right] = \PJextfunc(\uVAtobj)\).
\end{proof}

From this we can deduce a useful factorisation for when we do have a forcing condition.

\begin{corollary}
\(\Jextfunc = \Jforfunc \Jextfunc_\pre\) and \(\Mextfunc = \Mforfunc \Mextfunc_\pre\).
\end{corollary}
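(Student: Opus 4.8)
The plan is to establish each identity one fibre at a time. All the functors in the statement commute with the forgetful functors to \(\ucat\), fix the underlying object, and act as the identity on underlying morphisms; hence it suffices to check that, for each fixed \(\uobj\), the maps \(\Jextfunc\) and \(\Jforfunc\PJextfunc\) agree as order-preserving maps of complete lattices \(\uFVAtcat_{\uobj} \to \uFVBtcat_{\uobj}\), these fibres being complete lattices by Proposition~\ref{prop:lattice}. Here \(\Jforfunc\) denotes the forcing functor \(\uVBtcat \to \uFVBtcat\) for the empty forcing condition and the chosen one, while \(\PJextfunc\) (the functor \(\Jextfunc_\pre\) of the statement, computed in Proposition~\ref{prop:extpre}) is evaluated on an object of \(\uFVAtcat_{\uobj}\) after regarding it as an object of \(\uVAtcat_{\uobj}\) via \(\incfunc\). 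I would first record the structural ingredients: the square of order-preserving maps commutes, \(\resfunc\,\incfunc = \incfunc\,\resfunc \colon \uFVBtcat_{\uobj} \to \uVAtcat_{\uobj}\); on each fibre \(\incfunc\) is the inclusion of a lattice carrying the induced order and so reflects \(\preceq\); and each functor has the form \(f \mapsto \join f^{-1}[\bot,-]\) for the appropriate order-preserving \(f\).

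With this set up I would prove the two inequalities. For \(\Jextfunc(\uFVAtobj) \preceq \Jforfunc\PJextfunc(\uFVAtobj)\), take any \(Y \in \uFVBtcat_{\uobj}\) with \(\resfunc(Y) \preceq \uFVAtobj\), so that \(Y\) contributes to the join defining \(\Jextfunc(\uFVAtobj)\). Applying \(\incfunc\) and using commutativity of the square gives \(\resfunc\,\incfunc(Y) = \incfunc\,\resfunc(Y) \preceq \incfunc(\uFVAtobj)\); thus \(\incfunc(Y)\) contributes to the join defining \(\PJextfunc(\uFVAtobj)\), whence \(\incfunc(Y) \preceq \PJextfunc(\uFVAtobj)\), and so \(Y\) contributes to the join defining \(\Jforfunc\PJextfunc(\uFVAtobj)\). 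Taking the join over all such \(Y\) yields the inequality.

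The reverse inequality is the crux, and it is precisely the step that would fail for a generic order-preserving \(\resfunc\): one cannot simply push \(\resfunc\) through the join defining \(\PJextfunc(\uFVAtobj)\), since \(\resfunc\) need not preserve joins. This is the same phenomenon behind the warning that the naive expectations for the extension functors may fail. What saves the argument is that Proposition~\ref{prop:extpre} gives not merely \(\resfunc\,\PJextfunc(\uFVAtobj) \preceq \uFVAtobj\) but the exact identity \(\resfunc\,\PJextfunc = \mathrm{id}\), proved there via the computation \(\resfunc(\forfunc[\uVAtobj], \itest_a, \otest_a) = \uVAtobj\). Granting this, take any \(Y \in \uFVBtcat_{\uobj}\) with \(\incfunc(Y) \preceq \PJextfunc(\uFVAtobj)\). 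Applying the order-preserving \(\resfunc\) and using both the square and the identity gives \(\incfunc\,\resfunc(Y) = \resfunc\,\incfunc(Y) \preceq \resfunc\,\PJextfunc(\uFVAtobj) = \incfunc(\uFVAtobj)\); since \(\incfunc\) reflects \(\preceq\), this forces \(\resfunc(Y) \preceq \uFVAtobj\), so \(Y\) contributes to the join defining \(\Jextfunc(\uFVAtobj)\) and hence \(Y \preceq \Jextfunc(\uFVAtobj)\). Taking the join over all such \(Y\) gives \(\Jforfunc\PJextfunc(\uFVAtobj) \preceq \Jextfunc(\uFVAtobj)\), completing the equality.

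Finally, the identity \(\Mextfunc = \Mforfunc\PMextfunc\) follows by the evident order-theoretic dual: replace \(\join f^{-1}[\bot,-]\) by \(\meet f^{-1}[-,\top]\) throughout, reverse every \(\preceq\), and invoke the companion identity \(\resfunc\,\PMextfunc = \mathrm{id}\) supplied by the symmetric half of Proposition~\ref{prop:extpre}. Because all the functors involved act as the identity on underlying morphisms, fibrewise equality of the assignments on objects upgrades at once to equality of functors.
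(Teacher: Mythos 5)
Your proof is correct and takes essentially the same route as the paper: the whole content in both arguments is the identity \(\resfunc\,\Jextfunc_\pre = \mathrm{id}\) established in Proposition~\ref{prop:extpre}, which the paper packages once as the interval identity \(\resfunc^{-1}\left[\bot,\uVAtobj\right] = \left[\bot,\Jextfunc_\pre(\uVAtobj)\right]\) and you merely unfold into your two inequalities (using the maximality property from that proposition for one direction and \(\resfunc\,\Jextfunc_\pre = \mathrm{id}\) plus order-reflection of \(\incfunc\) for the other). The resulting computation \(\Jextfunc(\uVAtobj) = \join \incfunc^{-1}\left[\bot, \Jextfunc_\pre(\uVAtobj)\right] = \Jforfunc\Jextfunc_\pre(\uVAtobj)\), the dual treatment of \(\Mextfunc\), and the dismissal of morphism-level equality as a formality coincide with the paper's proof.
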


\begin{proof}
As part of the previous proof we showed that \(\resfunc \Jextfunc_\pre(\uVAtobj) = \uVAtobj\).
Thus
\[
  \resfunc^{-1} \left[\bot, \uVAtobj\right] = \left[\bot, \Jextfunc_\pre(\uVAtobj)\right].
\]
If we are extremely careful on the functors, we ought to write
\[
  \Jextfunc(\uVAtobj) = \join (\resfunc \incfunc)^{-1} \left[\bot,\uVAtobj \right].
\]
From which we deduce that
\begin{align*}
  \Jextfunc(\uVAtobj) &= \join \incfunc^{-1} \resfunc^{-1} \left[\bot, \uVAtobj \right] \\
&= \join \incfunc^{-1} \left[\bot, \Jextfunc_\pre(\uVAtobj) \right] \\
&= \Jforfunc \Jextfunc_\pre(\uVAtobj).
\end{align*}
Equality on morphisms is a formality.
\end{proof}

The idea here is that \(\Jextfunc_\pre(\uVAtobj)\) ought to be the maximum extension of \(\uVAtobj\) by a \uFVBtobj.
Therefore any extension of \(\uVAtobj\) to a \uFVBtobj must lie below it.
Since \(\Jextfunc(\uVAtobj)\) is meant to be the maximum such \uFVBtobj, we can find it by looking at the ``nearest'' \uFVBtobj below \(\Jextfunc_\pre(\uVAtobj)\).
In other words, by applying \(\Jforfunc\).

Since we know that it is not always true that \(\Jforfunc(\uFVBtobj) \preceq \uFVBtobj\), the obvious question is whether or not \(\Jextfunc(\uFVAtobj) \preceq \PJextfunc(\uFVAtobj)\).

Before proving this we observe that from proposition~\ref{prop:extpre} we obtain another extension functor which will help us establish the relationships between the other various extension functors.

\begin{lemma}
The assignment \(\uVAtobj \mapsto (\forfunc[\uVAtobj], \itest_a, \otest_b)\) defines another functor \(\extfunc \colon \uFVAtcat \to \uFVBtcat\).
This functor has the following properties:
\begin{enumerate}
\item \(\resfunc \extfunc\) is the identity on \(\uFVAtcat\),
\item \(\Mextfunc(\uFVAtobj) \preceq \extfunc(\uFVAtobj) \preceq \Jextfunc(\uFVAtobj)\) for all \uFVAtobjs[\uFVAtobj], and
\item \(\extfunc \sfunc = \sfunc \colon \Btcat \to \uFVBtcat\).
\end{enumerate}
\end{lemma}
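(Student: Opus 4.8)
The plan is to take $\extfunc(\uVAtobj) = (\forfunc[\uVAtobj], \itest_a, \otest_b)$, pairing the input test functor of $\PJextfunc$ with the output test functor of $\PMextfunc$ from Proposition~\ref{prop:extpre}, and to check in order: that this triple is a $\uVBtobj$, that it satisfies the $B$-forcing condition, that the assignment is functorial, and then the three stated properties. That it is a $\uVBtobj$ is the fullness argument already used in Proposition~\ref{prop:extpre}: for $\ifn \in \itest_a(\Btobj)$ and $\ofn \in \otest_b(\Btobj')$ the maps $\ifn$ and $\ofn$ underlie morphisms $\sfunc(\Btobj) \to \uVAtobj$ and $\uVAtobj \to \sfunc(\Btobj')$ in $\uVAtcat$, so $\ofn\ifn$ underlies a morphism $\sfunc(\Btobj) \to \sfunc(\Btobj')$, which lies in $\im\ufunc$ because $\sfunc \colon \Btcat \to \uVAtcat$ is full (Lemma~\ref{lem:tembedx}); this is exactly the compatibility condition. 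Functoriality on morphisms is then routine, as for $\PJextfunc$ and $\PMextfunc$.

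Granting for the moment that $\extfunc(\uVAtobj)$ is a $\uFVBtobj$, the three properties follow quickly. Property~(1) is immediate from Proposition~\ref{prop:extpre}, which records $\itest_a(\Atobj) = \itest_{\uVAtobj}(\Atobj)$ and $\otest_b(\Atobj) = \otest_{\uVAtobj}(\Atobj)$ for $\Atobj$ in $\Atcat$, so $\resfunc\extfunc(\uVAtobj) = \uVAtobj$. For property~(2) I would first establish $\PMextfunc(\uVAtobj) \preceq \extfunc(\uVAtobj) \preceq \PJextfunc(\uVAtobj)$ in $\uVBtcat$: since $\PMextfunc(\uVAtobj)$ restricts to $\uVAtobj$ it lies in $\resfunc^{-1}[\bot, \uVAtobj]$ and hence below its join $\PJextfunc(\uVAtobj)$, giving the component inclusions $\itest_b \subseteq \itest_a$ and $\otest_a \subseteq \otest_b$, and comparing the three triples componentwise yields the chain. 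Then $\extfunc(\uVAtobj)$ is a $\uFVBtobj$ below $\PJextfunc(\uVAtobj)$, hence one of the objects whose join is $\Jextfunc(\uVAtobj) = \Jforfunc\PJextfunc(\uVAtobj)$, so $\extfunc(\uVAtobj) \preceq \Jextfunc(\uVAtobj)$; dually $\Mextfunc(\uVAtobj) \preceq \extfunc(\uVAtobj)$. Property~(3) is a direct computation: for an object $\Btobj$ of $\Btcat$, fullness of $\sfunc$ gives $\itest_a(\Btobj') = \Hom{\uVAtcat}{\sfunc(\Btobj')}{\sfunc(\Btobj)} = \Hom{\Btcat}{\Btobj'}{\Btobj}$ and $\otest_b(\Btobj') = \Hom{\Btcat}{\Btobj}{\Btobj'}$, which are the test functors of $\sfunc(\Btobj)$, so $\extfunc\sfunc = \sfunc$.

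The substantial step, which I expect to be the main obstacle, is that $\extfunc(\uVAtobj)$ satisfies the forcing condition $(\iforce^2, \oforce^2)$. By the flip symmetry it suffices to treat inputs: a $\umor \colon \ufunc(\Btobj) \to \forfunc[\uVAtobj]$ that is $\iforce^2$-forced must lie in $\itest_a(\Btobj) = \Hom{\uVAtcat}{\sfunc(\Btobj)}{\uVAtobj}$, i.e.\ (Lemma~\ref{lem:tembedx}) must underlie a morphism $\sfunc(\Btobj) \to \uVAtobj$ in $\uVAtcat$, which means being compatible with $\itest_{\sfunc(\Btobj)} = \Hom{\Btcat}{-}{\Btobj}$ and $\otest_{\sfunc(\Btobj)} = \Hom{\Btcat}{\Btobj}{-}$. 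The output half is clean: for $\ofn' \in \otest_{\uVAtobj}(\Atobj') = \otest_b(\Atobj')$ the map $\ofn'$ is a $\uVBtmor$ out of $\extfunc(\uVAtobj)$ into the represented object $\sfunc(\Atobj')$, so $\ofn'\umor$ is again forced (forcedness being functorial, by the remark after Definition~\ref{def:forcing}); but a morphism forced into $\sfunc(\Atobj')$ must have $(1,1) \in \trials{\ofn'\umor}$ by the non-triviality clause of Definition~\ref{def:forcing}, whence $\ofn'\umor \in \im\ufunc$, as required.

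The input half is the hard part. Precomposing $\umor$ with $\ufunc(\Btmor)$ for $\Btmor \colon \Atobj' \to \Btobj$, which preserves forcedness, reduces it to $\umor\ufunc(\Btmor) \in \itest_{\uVAtobj}(\Atobj')$; as $\uVAtobj$ is already a $\uFVAtobj$ this would follow if $\umor\ufunc(\Btmor)$ were $\iforce^1$-forced relative to $\uVAtobj$. The difficulty is that the obvious comparison runs the wrong way: under $\extfunc$ and $\Atcat \hookrightarrow \Btcat$ the $A$-trials relative to $\uVAtobj$ embed into the $B$-trials relative to $\extfunc(\uVAtobj)$, and monotonicity of $\iforce^2$ along that inclusion gives an inequality in the useless direction, so forcedness cannot simply be transported. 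My intended resolution is to exploit $\ucat$-adequacy together with the representability of $\itest_a$ and $\otest_b$: adequacy detects $\im\ufunc$ through $\Atcat$-shaped probes, and combined with the clean ``forced into a represented object'' argument above this recasts the needed input-compatibility as a family of membership statements $\ofn'\umor\ufunc(\Btmor) \in \im\ufunc$. When the output forcing condition is independent of the output test functor---the case of saturation, which covers all the examples in Section~\ref{sec:smthwild}---this reduction is transparent and Proposition~\ref{prop:foradj} supplies the required control; verifying that adequacy provides the probes uniformly for a fully general forcing condition is where the genuine work remains.
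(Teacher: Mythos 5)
Most of your proposal coincides with the paper's route: the triple \((\forfunc[\uVAtobj], \itest_a, \otest_b)\), the compatibility check via fullness of \(\sfunc \colon \Btcat \to \uVAtcat\) (Lemma~\ref{lem:tembedx}), and your verifications of properties (1)--(3) are exactly what the paper has in mind (it dismisses the properties as ``obvious''; your componentwise derivation of (2) from \(\resfunc\extfunc = 1\) and \(\Jextfunc = \Jforfunc\PJextfunc\) is correct). Your output half of the forcing verification is also sound: post\hyp{}composing a forced \(\umor\) with \(\ofn' \in \otest_b(\Atobj')\), viewed as a \uVBtmor into \(\sfunc(\Atobj')\), and then invoking the clause of Definition~\ref{def:forcing} that nothing can be forced into \(\sfunc(\Atobj')\) unless \((1,1)\) is among its trials, is a clean way to get \(\ofn'\umor \in \im\ufunc\); it is essentially what the paper waves at with ``the case for the output forcing condition is similar'', and it even replaces the adequacy appeal that the preceding lemma uses for the same output compatibility.

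The gap is the input half, which you explicitly leave unfinished, and which is precisely the content of the lemma. The paper's proof needs none of the machinery you reach for (adequacy probes, Proposition~\ref{prop:foradj}, restriction to saturation). It makes exactly your first reduction\emhyp{}precompose with \(\ufunc(\Btmor)\) for \(\Btmor \colon \Atobj' \to \Btobj\), forcedness being functorial\emhyp{}and then simply observes that \(\ifn\ufunc(\Btmor)\) now has source \(\ufunc(\Atobj')\) with \(\Atobj'\) an \Atobjalt, and that \((\iforce^1, \oforce^1)\) was \emph{constructed} as the restriction of \((\iforce^2, \oforce^2)\): for \(A\)\hyp{}sourced morphisms, \(\iforce^2\)\hyp{}forcedness is read as \(\iforce^1\)\hyp{}forcedness relative to \(\uFVAtobj\) (the same step the preceding lemma uses in the phrase ``forced for \(\widehat{\uFVAtobj}\) and hence for \(\resfunc(\widehat{\uFVAtobj})\)''). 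Since \(\uFVAtobj\) satisfies the \(A\)\hyp{}forcing condition, \(\ifn\ufunc(\Btmor) \in \itest_{\uFVAtobj}(\Atobj')\) for every such probe, whence \(\ifn\) underlies a \uVAtmor \(\sfunc(\Btobj) \to \uFVAtobj\) and so lies in \(\itest_a(\Btobj)\). Your ``wrong direction'' worry concerns transporting forcedness of \(\ifn\) \emph{itself} between the two settings, which the proof never attempts; the only transfer needed is for the probed, \(A\)\hyp{}sourced morphisms. To be fair, the paper leaves that transfer at the level of its informal definition of the restricted condition in the change\hyp{}of\hyp{}test\hyp{}spaces set\hyp{}up, so a scrupulous write\hyp{}up should isolate and prove that one statement (it is immediate for saturation, sheaf, determined, and terminal conditions); but as submitted your proposal does not prove the lemma, and the missing idea is this definitional transfer for \(A\)\hyp{}sourced morphisms, not saturation\hyp{}specific control via Proposition~\ref{prop:foradj}.
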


\begin{proof}
That this is a \uVBtobj comes from the fullness of \(\sfunc \colon \Btcat \to \uVAtcat\).
Composition defines a map
\[
  \Hom{\uVAtcat}{\sfunc(\Btobj')}{\uFVAtobj} \times   \Hom{\uVAtcat}{\uFVAtobj}{\sfunc(\Btobj)} \to \Hom{\uVAtcat}{\sfunc(\Btobj')}{\sfunc(\Btobj)} \cong \Hom{\Btcat}{\Btobj'}{\Btobj}.
\]
It is clear that the restriction of this to \Atcat is \(\uFVAtobj\).

To show that it satisfies the forcing conditions, let \(\ifn \colon \ufunc(\Btobj) \to \forfunc[\uFVAtobj]\) be forced for this \uVBtobjalt.
Then for any \Btmor \(\Btmor \colon \Atobj' \to \Btobj\) for an \Atobj[\Atobj'] the composition \(\ifn \Btmor\) is forced.
Since \(\uFVAtobj\) satisfies the forcing conditions with respect to \Atcat, we therefore have that \(\ifn \Btmor \in \itest_{\uFVAtobj}(\Atobj')\).
As this holds for all such \(\Btmor\), \(\ifn\) underlies a \uVAtmor \(\sfunc(\Btobj) \to \uFVAtobj\).
It is therefore in \(\itest_a(\Btobj)\).
The case for the output forcing condition is similar, and hence we have a \uFVBtobjalt.

The properties are obvious.
\end{proof}

\begin{lemma}
For all \uFVAtobjs[\uFVAtobj],
\[
  \PMextfunc(\uFVAtobj) \preceq \Mextfunc(\uFVAtobj) \preceq \extfunc(\uFVAtobj) \preceq \Jextfunc(\uFVAtobj) \preceq \PJextfunc(\uFVAtobj).
\]
\end{lemma}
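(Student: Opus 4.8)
The plan is to take the three central inequalities \(\Mextfunc(\uFVAtobj) \preceq \extfunc(\uFVAtobj) \preceq \Jextfunc(\uFVAtobj)\) directly from the preceding lemma, and to prove the two outer ones separately. These outer inequalities are related by the flip exchanging input and output data, which reverses \(\preceq\): this flip sends \(\Jextfunc \mapsto \Mextfunc\) and \(\PJextfunc \mapsto \PMextfunc\), so it turns \(\Jextfunc(\uFVAtobj) \preceq \PJextfunc(\uFVAtobj)\) into \(\PMextfunc(\uFVAtobj) \preceq \Mextfunc(\uFVAtobj)\). I would therefore only prove \(\Jextfunc(\uFVAtobj) \preceq \PJextfunc(\uFVAtobj)\) and invoke the flip for the other.

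For that inequality I would use the factorisation \(\Jextfunc = \Jforfunc\PJextfunc\) and compare \(\Jextfunc(\uFVAtobj) = \Jforfunc\PJextfunc(\uFVAtobj)\) with \(\PJextfunc(\uFVAtobj) = (\forfunc[\uFVAtobj], \itest_a, \otest_a)\) componentwise. The output inclusion \(\otest_{\Jextfunc(\uFVAtobj)} \supseteq \otest_a\) is the easy half: \(\Jforfunc\PJextfunc(\uFVAtobj)\) is a join in a fibre of \(\uFVBtcat\), and by the dual of the construction in Proposition~\ref{prop:lattice} its output test functor is the intersection of the output test functors of all objects of \(\uFVBtcat\) lying \(\preceq \PJextfunc(\uFVAtobj)\); each such object has output containing \(\otest_a\), so the intersection does too. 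For the input inclusion \(\itest_{\Jextfunc(\uFVAtobj)} \subseteq \itest_a\), take \(\ifn \in \itest_{\Jextfunc(\uFVAtobj)}(\Btobj)\); I must show \(\ifn\) underlies a \uVAtmor \(\sfunc(\Btobj) \to \uFVAtobj\), which by Proposition~\ref{prop:extpre} is exactly the condition \(\ifn \in \itest_a(\Btobj)\). This splits into two requirements. The output requirement\emhyp{}that \(\ofn\ifn \in \im \ufunc\) for every \(\ofn \in \otest_{\uFVAtobj}(\Atobj')\) with \(\Atobj'\) in \(\Atcat\)\emhyp{}follows at once from compatibility of \(\Jextfunc(\uFVAtobj)\) together with \(\otest_a \subseteq \otest_{\Jextfunc(\uFVAtobj)}\) and the fullness of \(\sfunc\) (Lemma~\ref{lem:tembedx}). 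The input requirement\emhyp{}that \(\ifn\,\ufunc(\Btmor) \in \itest_{\uFVAtobj}(\Atobj')\) for every \(\Btmor \colon \Atobj' \to \Btobj\)\emhyp{}reduces, by functoriality of the test functor, to the statement that \(\itest_{\Jextfunc(\uFVAtobj)}(\Atobj') \subseteq \itest_{\uFVAtobj}(\Atobj')\) for all \(\Atobj'\) in \(\Atcat\); equivalently, that \(\resfunc\Jextfunc(\uFVAtobj) \preceq \uFVAtobj\).

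The main obstacle is exactly this last containment. One inclusion is free: since \(\extfunc(\uFVAtobj) \preceq \Jextfunc(\uFVAtobj)\) and \(\resfunc\extfunc(\uFVAtobj) = \uFVAtobj\), applying \(\resfunc\) gives \(\uFVAtobj \preceq \resfunc\Jextfunc(\uFVAtobj)\), and the same computation as above shows the two agree on outputs over \(\Atcat\). What must be ruled out is that the forcing performed inside the join manufactures an input test function at some \(\Atobj'\) in \(\Atcat\) that is not already in \(\itest_{\uFVAtobj}(\Atobj')\)\emhyp{}this is precisely the phenomenon that makes \(\mJo{\uAFVtobj} \preceq \uAFVtobj\) fail in the example at the end of Section~\ref{sec:force}. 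Here it cannot happen: \(\uFVAtobj\) already satisfies the \(\Atcat\)\hyp{}forcing condition, which is the restriction of the \(\Btcat\)\hyp{}forcing condition, and under the standing adequacy assumption a \(\Btcat\)\hyp{}trial at an object of \(\Atcat\) is controlled by \(\Atcat\)\hyp{}trials; combined with the restraint of Definition~\ref{def:forcing} that nothing non\hyp{}smooth is ever forced, this pins down any such \(\ifn\) in \(\itest_{\uFVAtobj}(\Atobj')\) already. I expect the careful bookkeeping of this reduction\emhyp{}translating \(\iforce^2\)\hyp{}forcing at an \(\Atcat\)\hyp{}object back into \(\iforce^1\)\hyp{}forcing\emhyp{}to be the real work of the proof, with the remaining inclusions then immediate and the dual inequality \(\PMextfunc(\uFVAtobj) \preceq \Mextfunc(\uFVAtobj)\) following by the flip.
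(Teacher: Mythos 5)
You have the right skeleton, and most of it matches the paper: the middle inequalities do come from the preceding lemma together with the factorisations \(\Mextfunc = \Mforfunc\PMextfunc\) and \(\Jextfunc = \Jforfunc\PJextfunc\); the two outer relations are dual; the output half \(\otest_a \subseteq \otest_{\Jextfunc(\uFVAtobj)}\) via the join\hyp{}as\hyp{}intersection description is correct; and your handling of the output requirement for \(\ifn \in \itest_a(\Btobj)\) by compatibility is sound (indeed slightly slicker than the paper, which uses adequacy at the corresponding point because there \(\ifn\) is only \emph{forced}, not yet known to be a test function). The gap is the step you yourself defer: \(\itest_{\Jextfunc(\uFVAtobj)}(\Atobj') \subseteq \itest_{\uFVAtobj}(\Atobj')\). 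This is not bookkeeping\emhyp{}it is essentially the content of the lemma (the subsequent corollary that \(\resfunc\Jextfunc\) is the identity is extracted from it), and the mechanisms you invoke cannot deliver it. The restraint in Definition~\ref{def:forcing} constrains forcing only into objects of the form \(\sfunc(\tobj)\), not into an arbitrary join; \(\ucat\)\enhyp{}adequacy detects failure to lie in \(\im\ufunc\) and says nothing about trials; and the translation you would need\emhyp{}that \(\iforce^2\)\enhyp{}forcedness at an \Atobjalt implies \(\iforce^1\)\enhyp{}forcedness\emhyp{}is exactly the direction that is not available in general (only the converse is used to show that restriction preserves \uFVtobjs). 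Concretely, by the construction in Proposition~\ref{prop:lattice} the input functor of the join is an intersection over \emph{upper bounds} of the family in \(\uFVBtcat\), so to bound it by \(\itest_a\) you must exhibit a single \uFVBtobj upper bound whose input functor is \(\itest_a\); note that \(\extfunc(\uFVAtobj)\) does not qualify, since family members need only have outputs containing \(\otest_a\), not \(\otest_b\).

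The paper's proof manufactures precisely that object: the hybrid \(\widehat{\uFVAtobj} = (\forfunc[\uFVAtobj], \itest_a, \otest_{\Jextfunc(\uFVAtobj)})\). Since \(\widehat{\uFVAtobj} \preceq \PJextfunc(\uFVAtobj)\) holds by construction and \(\resfunc\PJextfunc(\uFVAtobj) = \uFVAtobj\), the comparison \(\resfunc(\widehat{\uFVAtobj}) \preceq \uFVAtobj\) comes for free\emhyp{}this is what breaks the circularity your reduction runs into. One then verifies the input forcing condition for \(\widehat{\uFVAtobj}\) directly: precompose a forced \(\ifn\) with every \(\tmor \colon \Atobj' \to \Btobj\), transport forcedness along \(\resfunc(\widehat{\uFVAtobj}) \to \uFVAtobj\) to land in \(\itest_{\uFVAtobj}(\Atobj')\), and use adequacy to lift \(\ifn\) to a \uVAtmor \(\sfunc(\Btobj) \to \uFVAtobj\); the output forcing condition follows because a morphism forced for \(\widehat{\uFVAtobj}\) is forced for every \uFVBtobj below \(\PJextfunc(\uFVAtobj)\) and so lies in the intersection \(\otest_{\Jextfunc(\uFVAtobj)}\). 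Hence \(\widehat{\uFVAtobj}\) is a \uFVBtobj, is the maximum of the family, therefore equals \(\Jextfunc(\uFVAtobj)\), and \(\Jextfunc(\uFVAtobj) \preceq \PJextfunc(\uFVAtobj)\) follows. Until you carry out this construction (or an equivalent closure argument), the crux of your proof is asserted rather than proved.
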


\begin{proof}
The relationship of \(\extfunc\) to the others is straightforward.
From its description we have
\[
  \PMextfunc(\uFVAtobj) \preceq \extfunc(\uFVAtobj) \preceq \PJextfunc(\uFVAtobj).
\]
Hence as \(\Mextfunc = \Mforfunc \PMextfunc\) and \(\Jextfunc = \Jforfunc \PJextfunc\) we see that
\[
  \Mextfunc(\uFVAtobj) \preceq \extfunc(\uFVAtobj) \preceq \Jextfunc(\uFVAtobj).
\]

The two outer relations are proved in the same fashion as each other so let us take the right\hyp{}hand one.
From section~\ref{sec:force} we see that this depends on the input test functors.
Let us write \(\mMo{\uFVAtobj}\) for \(\Jextfunc(\uFVAtobj)\).
Consider the \uVBtobj[\widehat{\uFVAtobj}] with \uobj[{\forfunc[\uFVAtobj]}], input test functor \(\itest_a\), and output test functor \(\itest_{\mMo{\uFVAtobj}}\).
Note that \(\itest_a\) is the input test functor for \(\PJextfunc(\uFVAtobj)\) and so \(\widehat{\uFVAtobj} \preceq \PJextfunc(\uFVAtobj)\).

Suppose that \(\ifn \colon \ufunc(\Btobj) \to \forfunc[\uFVAtobj]\) is forced for \(\widehat{\uFVAtobj}\).
Then for every \Atobj[\Atobj'] and \tmor \(\tmor \colon \Atobj' \to \Btobj\), \(\ifn \ufunc(\tmor) \colon \ufunc(\Atobj') \to \forfunc[\uFVAtobj]\) is forced for \(\widehat{\uFVAtobj}\) and hence for \(\resfunc(\widehat{\uFVAtobj})\).

It is possible to show that \(\resfunc(\widehat{\uFVAtobj}) = \uFVAtobj\) but for this step it is sufficient to note that as \(\widehat{\uFVAtobj} \preceq \PJextfunc(\uFVAtobj)\), the identity on \(\forfunc[\uFVAtobj]\) lifts to a \uFVAtmor \(\resfunc(\widehat{\uFVAtobj}) \to \uFVAtobj\).
Thus \(\ifn \ufunc(\tmor) \colon \ufunc(\Atobj') \to \forfunc[\uFVAtobj]\) is forced for \(\uFVAtobj\) and thus in \(\itest_{\uFVAtobj}(\Atobj')\).
This is sufficient to show that \(\ifn \colon \ufunc(\Btobj) \to \forfunc[\uFVAtobj]\) lifts to a \uVAtmor \(\sfunc(\Btobj) \to \uFVAtobj\) since for \(\ofn \in \otest_{\uFVAtobj}(\Atobj'')\) and \(\tmor \colon \Atobj' \to \Btobj\), \(\ofn \ifn \tmor \in \Hom{\Btcat}{\Atobj'}{\Atobj''}\) and hence \(\ofn \ifn \in \otest_{\sfunc(\Btobj)}(\Atobj'')\) by the assumption that \Atcat is \(\ucat\)\enhyp{}adequate in \Btcat.

Thus \(\ifn \in \itest_a(\Btobj)\) and so \(\widehat{\uFVAtobj}\) satisfies the input forcing condition.

Now suppose that \(\uFVBtobj'\) is a \uFVBtobj with underlying \uobj[{\forfunc[\uFVAtobj]}] such that the identity on \(\forfunc[\uFVAtobj]\) lifts to a \uVBtmor \(\uFVBtobj' \to \PJextfunc(\uFVAtobj)\).
Then \(\itest_{\uFVBtobj'} \subseteq \itest_a\) and \(\otest_{\uFVBtobj'} \supseteq \otest_{\mMo{\uFVAtobj}}\).
Hence the identity on \(\forfunc[\uFVAtobj]\) lifts to a \uVBtmor \(\uFVBtobj' \to \widehat{\uFVAtobj}\).
Thus if \(\ofn \colon \forfunc[\uFVAtobj] \to \ufunc(\Btobj)\) is forced for \(\widehat{\uFVAtobj}\), \(\ofn\) is forced for \(\uFVBtobj'\) as well.
Hence as \(\uFVBtobj'\) is a \uFVBtobj, \(\ofn \in \otest_{\uFVBtobj'}(\Btobj)\).
As this holds for all such \(\uFVBtobj'\), \(\ofn \in \otest_{\widehat{\uFVAtobj}}(\Btobj)\) and thus \(\widehat{\uFVAtobj}\) satisfies the output forcing condition.

Hence \(\widehat{\uFVAtobj}\) is a \uFVBtobj and so is \(\Jextfunc(\uFVAtobj)\).
Thus \(\Jextfunc(\uFVAtobj) \preceq \PJextfunc(\uFVAtobj)\).
\end{proof}

\begin{corollary}
\(\resfunc \Jextfunc\) and \(\resfunc \Mextfunc\) are the identity on \(\uFVAtcat\) and \(\Mextfunc \resfunc(\uFVBtobj) \preceq \uFVBtobj \preceq \Jextfunc \resfunc(\uFVBtobj)\) for all \uFVBtobjalts[\uFVBtobj].
 \noproof
\end{corollary}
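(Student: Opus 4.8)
The plan is to argue fibrewise over a fixed \(\ucat\)\enhyp{}object \(\uobj\), since \(\resfunc\), \(\Jextfunc\) and \(\Mextfunc\) all preserve the underlying \uobj, so that everything can be checked inside the fibres \(\uFVBtcat_{\uobj}\) and \(\uFVAtcat_{\uobj}\). On these fibres \(\resfunc\) restricts to an order\hyp{}preserving map \(\uFVBtcat_{\uobj} \to \uFVAtcat_{\uobj}\), and by definition \(\Jextfunc(\uFVAtobj) = \join \resfunc^{-1}[\bot, \uFVAtobj]\) while \(\Mextfunc(\uFVAtobj) = \meet \resfunc^{-1}[\uFVAtobj, \top]\). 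I would separate the two assertions: the displayed inequalities \(\Mextfunc\resfunc(\uFVBtobj) \preceq \uFVBtobj \preceq \Jextfunc\resfunc(\uFVBtobj)\) are purely formal consequences of the lattice definitions, whereas the statement that \(\resfunc\Jextfunc\) and \(\resfunc\Mextfunc\) are the identity needs the finer comparisons established in the preceding lemmas.

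For the displayed inequalities, put \(\uFVAtobj \coloneqq \resfunc(\uFVBtobj)\) and note that \(\uFVBtobj\) lies in both \(\resfunc^{-1}[\bot, \uFVAtobj]\) and \(\resfunc^{-1}[\uFVAtobj, \top]\), simply because \(\resfunc(\uFVBtobj) = \uFVAtobj\). Since a join is an upper bound and a meet a lower bound of the family indexing it, membership immediately yields \(\uFVBtobj \preceq \join \resfunc^{-1}[\bot, \uFVAtobj] = \Jextfunc\resfunc(\uFVBtobj)\) and \(\Mextfunc\resfunc(\uFVBtobj) = \meet \resfunc^{-1}[\uFVAtobj, \top] \preceq \uFVBtobj\). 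This is the general lattice observation applied to \(\resfunc\), and requires nothing beyond the definitions.

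For the identities I would squeeze the chain proved in the lemma immediately above, \(\PMextfunc(\uFVAtobj) \preceq \Mextfunc(\uFVAtobj) \preceq \extfunc(\uFVAtobj) \preceq \Jextfunc(\uFVAtobj) \preceq \PJextfunc(\uFVAtobj)\). Applying the order\hyp{}preserving functor \(\resfunc\) and feeding in the three retraction identities already available\emhyp{}\(\resfunc\PJextfunc(\uFVAtobj) = \uFVAtobj\) from Proposition~\ref{prop:extpre}, \(\resfunc\PMextfunc(\uFVAtobj) = \uFVAtobj\) from its symmetric counterpart, and \(\resfunc\extfunc(\uFVAtobj) = \uFVAtobj\) from the properties of \(\extfunc\)\emhyp{}the chain collapses to \(\uFVAtobj \preceq \resfunc\Mextfunc(\uFVAtobj) \preceq \uFVAtobj\) and \(\uFVAtobj \preceq \resfunc\Jextfunc(\uFVAtobj) \preceq \uFVAtobj\). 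Hence \(\resfunc\Mextfunc\) and \(\resfunc\Jextfunc\) agree with the identity on objects; equality on morphisms is then a formality, as \(\uVtcat\) is an amnestic construct over \(\ucat\) and both composites act as the identity on the underlying \umors.

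The step I expect to carry the real weight is the collapse of the chain, and it is worth flagging why it is not automatic: for a merely order\hyp{}preserving retraction \(f\) of complete lattices one can have \(f\bigl(\join f^{-1}[\bot, a]\bigr) \ne a\), precisely because \(f\) need not preserve the join that defines the extension. The genuine obstacle has therefore already been discharged in the earlier comparison \(\Jextfunc(\uFVAtobj) \preceq \PJextfunc(\uFVAtobj)\) (and its dual \(\PMextfunc(\uFVAtobj) \preceq \Mextfunc(\uFVAtobj)\)) combined with \(\resfunc\PJextfunc(\uFVAtobj) = \uFVAtobj\); granted those, the present argument is formal. The one point I would verify with care is that the symmetric version of Proposition~\ref{prop:extpre}, supplying \(\resfunc\PMextfunc(\uFVAtobj) = \uFVAtobj\), really is in hand, since that proof was written out only for the \(\PJextfunc\) side.
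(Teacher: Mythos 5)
Your proof is correct and is essentially the paper's intended argument: the corollary is stated with no proof precisely because it follows, as you show, by applying the order\hyp{}preserving \(\resfunc\) to the chain \(\PMextfunc \preceq \Mextfunc \preceq \extfunc \preceq \Jextfunc \preceq \PJextfunc\) together with the retractions \(\resfunc\PJextfunc = \resfunc\extfunc = \resfunc\PMextfunc = \mathrm{id}\), plus the formal lattice observation that \(\uFVBtobj\) itself lies in the preimage intervals defining \(\Jextfunc\resfunc(\uFVBtobj)\) and \(\Mextfunc\resfunc(\uFVBtobj)\). The point you flag at the end is indeed in hand: Proposition~\ref{prop:extpre} states the \(\PMextfunc\) case explicitly, with the retraction \(\resfunc\PMextfunc = \mathrm{id}\) following by the symmetry invoked there (take \(1_{\Atobj}\) as the auxiliary morphism in \(\itest_b\) and apply Lemma~\ref{lem:tembed} to \(\otest_b\)).
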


One further interesting question is as to when \(\Jextfunc\) and \(\Mextfunc\) agree, or if either is the same as \(\extfunc\).
It transpires that it is sufficient to check this on the image of \(\sfunc \colon \Btcat \to \uFVAtcat\).

\begin{proposition}
If any of \(\Jextfunc\), \(\Mextfunc\), and \(\extfunc\) agree on the image of \(\sfunc \colon \Btcat \to \uFVAtcat\) then they agree on the whole of \(\uFVAtcat\).
\end{proposition}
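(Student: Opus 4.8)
The plan is to exploit the chain
\(\PMextfunc(\uFVAtobj) \preceq \Mextfunc(\uFVAtobj) \preceq \extfunc(\uFVAtobj) \preceq \Jextfunc(\uFVAtobj) \preceq \PJextfunc(\uFVAtobj)\)
established just above, together with the explicit test functors of Proposition~\ref{prop:extpre}, namely that (for a fixed \(\uFVAtobj\)) one has \(\PJextfunc(\uFVAtobj) = (\forfunc[\uFVAtobj], \itest_a, \otest_a)\), \(\extfunc(\uFVAtobj) = (\forfunc[\uFVAtobj], \itest_a, \otest_b)\) and \(\PMextfunc(\uFVAtobj) = (\forfunc[\uFVAtobj], \itest_b, \otest_b)\). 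The first step is to observe that the difference between the functors is concentrated in a single test functor. Feeding \(\extfunc \preceq \Jextfunc \preceq \PJextfunc\) through the input side gives \(\itest_a \subseteq \itest_{\Jextfunc(\uFVAtobj)} \subseteq \itest_a\), so \(\itest_{\Jextfunc(\uFVAtobj)} = \itest_a = \itest_{\extfunc(\uFVAtobj)}\) for every \(\uFVAtobj\); hence \(\Jextfunc(\uFVAtobj) = \extfunc(\uFVAtobj)\) is equivalent to \(\otest_{\Jextfunc(\uFVAtobj)} = \otest_b\), and since \(\extfunc \preceq \Jextfunc\) already yields \(\otest_{\Jextfunc(\uFVAtobj)} \subseteq \otest_b\), only the reverse inclusion \(\otest_b \subseteq \otest_{\Jextfunc(\uFVAtobj)}\) remains to be proved. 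Dually, \(\PMextfunc \preceq \Mextfunc \preceq \extfunc\) pins the output side to \(\otest_{\Mextfunc(\uFVAtobj)} = \otest_b = \otest_{\extfunc(\uFVAtobj)}\), so \(\Mextfunc(\uFVAtobj) = \extfunc(\uFVAtobj)\) reduces to the single inclusion \(\itest_a \subseteq \itest_{\Mextfunc(\uFVAtobj)}\).

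Next I would establish these two inclusions using the representability packaged in Lemma~\ref{lem:tembedx} and the fact that all three extension functors fix the underlying \uobj and \umor, acting within a single fibre of \(\forfunc \colon \uFVBtcat \to \ucat\). Assume \(\Jextfunc\) and \(\extfunc\) agree on the image of \(\sfunc\); combined with the identity \(\extfunc\sfunc = \sfunc\) noted above, this gives \(\Jextfunc(\sfunc(\Btobj)) = \sfunc(\Btobj)\) for every \(\Btobj\). Now fix an object \(\Btobj\) of \(\Btcat\) and take \(\ofn \in \otest_b(\Btobj) = \Hom{\uVAtcat}{\uFVAtobj}{\sfunc(\Btobj)}\); since \(\uFVAtcat\) is full in \(\uVAtcat\) and \(\sfunc(\Btobj) \in \uFVAtcat\), this \(\ofn\) underlies a morphism \(g \colon \uFVAtobj \to \sfunc(\Btobj)\) of \(\uFVAtcat\). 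Applying \(\Jextfunc\) produces \(\Jextfunc(g) \colon \Jextfunc(\uFVAtobj) \to \Jextfunc(\sfunc(\Btobj)) = \sfunc(\Btobj)\), whose underlying \umor is again \(\ofn\). By Lemma~\ref{lem:tembedx} this says exactly that \(\ofn \in \otest_{\Jextfunc(\uFVAtobj)}(\Btobj)\). As \(\ofn\) and \(\Btobj\) were arbitrary, \(\otest_b \subseteq \otest_{\Jextfunc(\uFVAtobj)}\), whence \(\Jextfunc(\uFVAtobj) = \extfunc(\uFVAtobj)\) for all \(\uFVAtobj\). The case of \(\Mextfunc\) and \(\extfunc\) is the exact dual: one takes \(\ifn \in \itest_a(\Btobj) = \Hom{\uVAtcat}{\sfunc(\Btobj)}{\uFVAtobj}\), views it as \(h \colon \sfunc(\Btobj) \to \uFVAtobj\), and applies \(\Mextfunc\) to obtain \(\ifn \in \itest_{\Mextfunc(\uFVAtobj)}(\Btobj)\).

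Finally I would dispatch the remaining hypothesis, that \(\Jextfunc\) and \(\Mextfunc\) agree on the image of \(\sfunc\), by reducing it to the two cases just handled. By the chain of inequalities we have \(\Mextfunc(\sfunc(\Btobj)) \preceq \extfunc(\sfunc(\Btobj)) \preceq \Jextfunc(\sfunc(\Btobj))\), so equality of the outer two forces \(\extfunc(\sfunc(\Btobj))\) to coincide with both. Thus \(\extfunc\) agrees with \(\Jextfunc\) and with \(\Mextfunc\) on the image of \(\sfunc\), and the previous paragraph gives \(\Jextfunc = \extfunc = \Mextfunc\) on the whole of \(\uFVAtcat\).

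The only genuinely delicate point is the bookkeeping of the first paragraph: one must check carefully that pushing the ordering relations through Proposition~\ref{prop:extpre} really does pin the input test functor of \(\Jextfunc(\uFVAtobj)\) (respectively the output test functor of \(\Mextfunc(\uFVAtobj)\)) to its boundary value, so that exactly one inclusion survives. Once that is in place, the transport of a test function along the functor applied to the corresponding morphism---legitimate precisely because the extension functors preserve underlying maps and send \(\sfunc(\Btobj)\) to \(\sfunc(\Btobj)\) under the hypothesis---closes the argument with essentially no computation.
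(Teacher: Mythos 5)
Your proposal is correct and follows essentially the same route as the paper's own proof: the paper likewise uses the sandwich \(\extfunc(\uFVAtobj) \preceq \Jextfunc(\uFVAtobj) \preceq \PJextfunc(\uFVAtobj)\) to pin the input test functor and reduce everything to the single output inclusion, then transports \(\ofn \in \otest_b(\Btobj) = \Hom{\uVAtcat}{\uFVAtobj}{\sfunc(\Btobj)}\) through \(\Jextfunc\) using \(\Jextfunc\sfunc = \extfunc\sfunc = \sfunc\) to conclude \(\ofn \in \otest_{\Jextfunc(\uFVAtobj)}(\Btobj)\), treats \(\Mextfunc\) dually, and handles the \(\Jextfunc = \Mextfunc\) hypothesis by squeezing \(\extfunc\) between them exactly as you do.
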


\begin{proof}
Suppose that, say, \(\Jextfunc\) and \(\extfunc\) agree on the image of \(\sfunc\).
Since, for a \uFVAtobj[\uFVAtobj], \(\extfunc(\uFVAtobj) \preceq \Jextfunc(\uFVAtobj) \preceq \PJextfunc(\uFVAtobj)\), and \(\extfunc(\uFVAtobj)\) and \(\PJextfunc(\uFVAtobj)\) have the same input test functor, to show that \(\Jextfunc\) and \(\extfunc\) agree it is sufficient to show that whenever \(\ofn \colon \forfunc[\uFVAtobj] \to \ufunc(\Btobj)\) is in \(\otest_{\extfunc(\uFVAtobj)}(\Btobj)\) then it is in \(\otest_{\Jextfunc(\uFVAtobj)}(\Btobj)\).

Let \(\ofn \in \otest_{\extfunc(\uFVAtobj)}(\Btobj)\).
Then for every \(\Btmor \colon \Btobj \to \Atobj'\) with \(\Atobj'\) an \Atobjalt, \(\Btmor \ofn \in \otest_{\uFVAtobj}(\Atobj')\).
Hence \(\ofn\) underlies a \uVAtmor \(\uFVAtobj \to \sfunc(\Btobj)\).
Applying \(\Jextfunc\) we obtain a \uVBtmor \(\Jextfunc(\uFVAtobj) \to \Jextfunc \sfunc(\Btobj)\).
By assumption, \(\Jextfunc \sfunc(\Btobj) = \extfunc \sfunc(\Btobj)\).
Since \(\extfunc \sfunc(\Btobj) = \sfunc(\Btobj)\), we deduce that \(\ofn\) defines a \uVBtmor \(\Jextfunc(\uFVAtobj) \to \sfunc(\Btobj)\) and hence \(\ofn \in \otest_{\Jextfunc(\uFVAtobj)}\).

Thus \(\Jextfunc = \extfunc\) as required.

The case for \(\Mextfunc\) and \(\extfunc\) is similar.
Then if \(\Mextfunc\) and \(\Jextfunc\) agree on the image of \(\sfunc\) then they both agree with \(\extfunc\) and hence they agree on the whole of \(\uFVAtcat\).
\end{proof}

As a concluding remark for this section, let us consider a subtly different question on extensions.
We have assumed in this section that we have a forcing condition on the larger test category which we restrict to the smaller.
It is possible to consider the situation from the opposite angle: suppose we have a forcing condition on the smaller test category which we wish to extend to the larger.
Can this be done, and in how many ways?

The work of this section shows that it can be done, and that the two extremes are straightforward to describe.
The minimal extension of the forcing conditions is, say for the input forcing condition, to say that a morphism \(\umor \colon \ufunc(\Btobj) \to \forfunc[\uVBtobj]\) is forced if there is a factorisation of \(\umor\) as \(\umor' \ufunc(\Btmor)\) where \(\Btmor \colon \Btobj \to \Atobj'\) is such that \(\Atobj'\) is an \Atobjalt and \(\umor' \colon \ufunc(\Atobj') \to \forfunc[\uVBtobj]\) is forced.
This is the minimum that can be done to preserve the fact that the forcing conditions are defined by functors.
With this extension of the forcing conditions, the extension functors are \(\Jextfunc = \PJextfunc\) and \(\Mextfunc = \PMextfunc\).
The maximal extension of the forcing conditions is, again for the input, to say that a morphism \(\umor \colon \ufunc(\Btobj) \to \forfunc[\uVBtobj]\) is forced if whenever \(\Btmor \colon \Atobj' \to \Btobj\) is a \Btmor with \(\Atobj'\) an \Atobjalt then \(\umor \ufunc(\Btmor)\) is forced.
With this extension of the forcing conditions, the extension functors are \(\Jextfunc = \extfunc = \Mextfunc\) and, in fact, the categories \(\uFVAtcat\) and \(\uFVBtcat\) are isomorphic via the extension and restriction functors.

\subsection{Change of Underlying Category}

Now we turn to the possibility of changing the underlying category.
We wish to consider two ways to do this.
The first is when we have a functor from one underlying category to another.
This will allow us to consider inclusions of subcategories and reflections and coreflections.
The second is when we have a topological category over another category and wish to transfer \uFVtobjs from the lower category to the higher one.

\subsubsection{Simple Transfers}

The most obvious way to change the underlying category is to have two underlying categories, say \(\Aucat\) and \(\Bucat\), together with a covariant functor \(\subfunc \colon \Aucat \to \Bucat\).
Given that we want to say something about \uVtobjs and \uFVtobjs, we should ensure that this functor ``plays nicely'' with the structures defining them.
Since we are focussing on the underlying categories here, we wish to avoid changing the rest of the structure as much as we can.
Therefore we fix a \tcat, \(\tcat\), with functors to \(\Aucat\) and \(\Bucat\), both of which we shall denote by \(\ufunc\).
The compatibility that we require is that \(\subfunc \ufunc = \ufunc\).

In the absence of forcing conditions, we get some simple results.

\begin{proposition}
For \(\AuVtobj\) a \AuVtobj, \((\subfunc(\forfunc[\uVtobj]), \subfunc \itest_{\uVtobj}, \subfunc \otest_{\uVtobj})\) is a \BuVtobj.
This assignment is functorial and covers \(\subfunc\).
\end{proposition}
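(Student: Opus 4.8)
The plan is a direct verification in which every step is driven by the single identity \(\subfunc\ufunc = \ufunc\), on both objects and morphisms. Write \(X \coloneqq \forfunc[\AuVtobj]\). Since \(\subfunc(\ufunc(\tobj)) = \ufunc(\tobj)\), applying \(\subfunc\) to a test function \(\ifn \colon \ufunc(\tobj) \to X\) yields a morphism \(\subfunc(\ifn) \colon \ufunc(\tobj) \to \subfunc(X)\) of \(\Bucat\), and dually \(\subfunc(\ofn) \colon \subfunc(X) \to \ufunc(\tobj)\). Thus \(\subfunc\itest_{\AuVtobj}(\tobj)\) and \(\subfunc\otest_{\AuVtobj}(\tobj)\) are honest subsets of \(\Hom{\Bucat}{\ufunc(\tobj)}{\subfunc(X)}\) and \(\Hom{\Bucat}{\subfunc(X)}{\ufunc(\tobj)}\) respectively, so the candidate triple \((\subfunc(X), \subfunc\itest_{\AuVtobj}, \subfunc\otest_{\AuVtobj})\) at least has the right shape.

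First I would check that these assignments are subfunctors. For a \tmor \(\tmor\), the induced map on the ambient hom-functor over \(\Bucat\) is pre- or post-composition with \(\ufunc(\tmor)\), so it suffices to see that the two families are closed under this operation. On the input side, \(\subfunc(\ifn)\,\ufunc(\tmor) = \subfunc(\ifn)\,\subfunc(\ufunc(\tmor)) = \subfunc\bigl(\ifn\,\ufunc(\tmor)\bigr)\), using \(\subfunc\ufunc = \ufunc\) and functoriality of \(\subfunc\); since \(\itest_{\AuVtobj}\) is a subfunctor, \(\ifn\,\ufunc(\tmor) \in \itest_{\AuVtobj}\), and hence its \(\subfunc\)-image lies in \(\subfunc\itest_{\AuVtobj}\). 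The output side is identical after flipping the composite.

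The one step that actually uses the structure of a \uVtobj is the compatibility condition, and this is where I would concentrate. Given \(\subfunc(\ifn) \in \subfunc\itest_{\AuVtobj}(\tobj)\) and \(\subfunc(\ofn) \in \subfunc\otest_{\AuVtobj}(\tobj')\), functoriality of \(\subfunc\) gives \(\subfunc(\ofn)\,\subfunc(\ifn) = \subfunc(\ofn\ifn)\). The compatibility of \(\AuVtobj\) supplies a \tmor \(\tmor\) with \(\ofn\ifn = \ufunc(\tmor)\) in \(\Aucat\); applying \(\subfunc\) and invoking \(\subfunc\ufunc = \ufunc\) once more yields \(\subfunc(\ofn)\,\subfunc(\ifn) = \ufunc(\tmor)\) in \(\Bucat\). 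Thus the composite again comes from \(\tcat\), i.e.\ lies in \(\Hom{\tcat}{-}{-}\) regarded as a subfunctor of \(\Hom{\Bucat}{\ufunc(-)}{\ufunc(-)}\); naturality is automatic since the comparison is assembled from functors. This confirms the triple is a \BuVtobj. The point worth flagging is that \(\subfunc\) need be neither full nor faithful: even if distinct test functions collapse under \(\subfunc\), the subfunctor and compatibility conditions only constrain images, so nothing breaks.

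Finally, for functoriality of the assignment and the claim that it covers \(\subfunc\): a morphism \(\AuVtobj_1 \to \AuVtobj_2\) is a morphism \(g \colon X_1 \to X_2\) of \(\Aucat\) with \(g\ifn \in \itest_{\AuVtobj_2}\) for all \(\ifn \in \itest_{\AuVtobj_1}\) and \(\ofn g \in \otest_{\AuVtobj_1}\) for all \(\ofn \in \otest_{\AuVtobj_2}\). I would send it to \(\subfunc(g)\); then \(\subfunc(g)\,\subfunc(\ifn) = \subfunc(g\ifn) \in \subfunc\itest_{\AuVtobj_2}\) and \(\subfunc(\ofn)\,\subfunc(g) = \subfunc(\ofn g) \in \subfunc\otest_{\AuVtobj_1}\), so \(\subfunc(g)\) underlies a morphism of the image objects. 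Preservation of identities and composites is immediate from functoriality of \(\subfunc\), and by construction the underlying morphism of the image is \(\subfunc(g)\), so the assignment covers \(\subfunc\). The whole argument is routine; its only moving part is the repeated substitution \(\subfunc\ufunc = \ufunc\), which is exactly what makes the compatibility transport cleanly from \(\Aucat\) to \(\Bucat\).
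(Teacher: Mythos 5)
Your proof is correct and takes essentially the same route as the paper: the paper's proof checks only the compatibility condition, via exactly your computation \(\subfunc(\ofn)\,\subfunc(\ifn) = \subfunc(\ofn\,\ifn) \in \im \subfunc\ufunc = \im \ufunc\), treating the subfunctor and functoriality/covering checks as routine. Your explicit verification of those routine steps, and your observation that \(\subfunc\) need not be full or faithful, are consistent with the paper and add nothing that conflicts with it.
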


\begin{proof}
The only thing to check is the compatibility between the input and output test morphisms.
Let \(\ifn \colon \ufunc(\tobj) \to \subfunc(\forfunc[\uVtobj])\) be an input test morphism and \(\ofn \colon \subfunc(\forfunc[\uVtobj]) \to \tobj'\) be an output test morphism.
By definition, \(\ifn = \subfunc(\ifn')\) for some \(\ifn' \colon \ufunc(\tobj) \to \forfunc[\uVtobj]\) and \(\ofn = \subfunc(\ofn'\) for some \(\ofn' \colon \ufunc(\tobj') \to \forfunc[\uVtobj]\).
Hence \(\ofn \ifn = \subfunc(\ofn' \ifn')\).
As \(\uVtobj\) is a \uVtobj, \(\ofn' \ifn' \in \im \ufunc\) and so \(\ofn \ifn \in \im \subfunc \ufunc = \im \ufunc\).
\end{proof}

We shall denote this functor \(\mVo{\subfunc}\).
Let us write \(\Fun(\Aucat, \Bucat; \ufunc)\) for the category of functors \(\Aucat \to \Bucat\) which intertwine \(\ufunc\), with natural transformations that are the identity on \(\im \ufunc\).

\begin{corollary}
The assignment \(\subfunc \mapsto \mVo{\subfunc}\) defines a functor
\[
  \Fun(\Aucat, \Bucat; \ufunc) \to \Fun(\AuVtcat, \BuVtcat).
\]
In particular, adjoint pairs map to adjoint pairs. \noproof
\end{corollary}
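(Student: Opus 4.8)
The plan is to upgrade the object\hyp{}level assignment of the preceding Proposition to a genuine functor by saying what it does to $2$\enhyp{}cells, and then to note that the whole construction is $2$\enhyp{}functorial, from which the claim about adjoint pairs drops out formally. On objects the functor $\mVo{\subfunc}$ is already supplied, so the first task is to define $\mVo{-}$ on a morphism of $\Fun(\Aucat, \Bucat; \ufunc)$, that is, on a natural transformation $\alpha \colon \subfunc_1 \Rightarrow \subfunc_2$ which is the identity on $\im \ufunc$. For a \AuVtobj \(\uVtobj\) I would declare the component of $\mVo{\alpha}$ at $\uVtobj$ to be $\alpha_{\forfunc[\uVtobj]} \colon \subfunc_1(\forfunc[\uVtobj]) \to \subfunc_2(\forfunc[\uVtobj])$, the component of $\alpha$ at the underlying object.

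The only verification carrying real content — the step I expect to be the crux — is that $\alpha_{\forfunc[\uVtobj]}$ underlies a morphism $\mVo{\subfunc_1}(\uVtobj) \to \mVo{\subfunc_2}(\uVtobj)$ of $\BuVtcat$. The input test functor of $\mVo{\subfunc_i}(\uVtobj)$ consists of the morphisms $\subfunc_i(\ifn)$ for $\ifn \in \itest_{\uVtobj}$, and dually on the output side. For an input test morphism $\ifn \colon \ufunc(\tobj) \to \forfunc[\uVtobj]$, naturality of $\alpha$ at $\ifn$ gives $\alpha_{\forfunc[\uVtobj]} \subfunc_1(\ifn) = \subfunc_2(\ifn) \alpha_{\ufunc(\tobj)}$; since $\ufunc(\tobj) \in \im \ufunc$ we have $\alpha_{\ufunc(\tobj)} = 1_{\ufunc(\tobj)}$, so the left\hyp{}hand side is exactly $\subfunc_2(\ifn)$, an element of the input test functor of $\mVo{\subfunc_2}(\uVtobj)$. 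Dually, for an output test morphism $\ofn$, naturality at $\ofn$ together with $\alpha_{\ufunc(\tobj)} = 1_{\ufunc(\tobj)}$ gives $\subfunc_2(\ofn) \alpha_{\forfunc[\uVtobj]} = \subfunc_1(\ofn)$, which lies in the output test functor of $\mVo{\subfunc_1}(\uVtobj)$. Hence $\alpha_{\forfunc[\uVtobj]}$ satisfies both defining conditions of a morphism of $\BuVtcat$. It is worth stressing that the hypothesis that $\alpha$ be the identity on $\im \ufunc$ is used precisely here: without it the two composites would differ by the factor $\alpha_{\ufunc(\tobj)}$ and would not in general land in the right test sets.

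Naturality of $\mVo{\alpha}$ and the functoriality of $\subfunc \mapsto \mVo{\subfunc}$ are then formal. Because the forgetful functor $\BuVtcat \to \Bucat$ is faithful, a square of morphisms in $\BuVtcat$ commutes as soon as its underlying square in $\Bucat$ does; the relevant underlying squares are naturality squares of $\alpha$, so $\mVo{\alpha}$ is natural, and since the components of $\mVo{\alpha}$ are literally components of $\alpha$, identities go to identities and vertical composites to vertical composites. This establishes the functor.

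Finally, for the ``in particular'' I would observe that the assignment is $2$\enhyp{}functorial. The defining formula, sending $\uVtobj$ to $(\subfunc(\forfunc[\uVtobj]), \subfunc\itest_{\uVtobj}, \subfunc\otest_{\uVtobj})$, yields at once $\mVo{1} = 1$ and $\mVo{\subfunc' \subfunc} = \mVo{\subfunc'} \mVo{\subfunc}$ whenever the composite is defined, and the prescription for $\mVo{\alpha}$ is manifestly compatible with whiskering. Thus $\mVo{-}$ is a $2$\enhyp{}functor from the $2$\enhyp{}category whose objects are categories equipped with a functor from $\tcat$, whose $1$\enhyp{}cells are the intertwining functors, and whose $2$\enhyp{}cells are the natural transformations that are the identity on $\im \ufunc$, into the $2$\enhyp{}category of categories. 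An adjunction $L \dashv R$ taken in this source $2$\enhyp{}category has unit and counit that are $2$\enhyp{}cells there, hence are the identity on $\im \ufunc$, so they map under $\mVo{-}$ to natural transformations; the triangle identities are equations between pasting composites of $2$\enhyp{}cells and so are preserved by any $2$\enhyp{}functor. Therefore $\mVo{L} \dashv \mVo{R}$, which is the assertion that adjoint pairs map to adjoint pairs.
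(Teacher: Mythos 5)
Your proposal is correct and supplies exactly the routine verification that the paper treats as immediate (the corollary is stated without proof): defining \(\mVo{\alpha}\) componentwise as \(\alpha_{\forfunc[\uVtobj]}\) and checking the two test conditions via naturality of \(\alpha\) together with the hypothesis that it is the identity on \(\im \ufunc\) is the evident argument, and you correctly identify that this hypothesis is where the content lies. Your \(2\)-functorial treatment of the adjoint-pair claim, reading the unit and counit as \(2\)-cells that are the identity on \(\im \ufunc\) so that adjunctions are preserved formally, is the reading under which the statement holds and is surely what the author intended.
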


Now let us consider forcing functors.
Here we run into difficulties.
What we want to be able to say is that if \(\AuFVtobj\) is a \AuFVtobj then \(\mVo{\subfunc}(\AuFVtobj)\) is a \BuFVtobj.
To do this we need to be able to assert that, for input morphisms, if \(\uFVtobj\) is a \AuFVtobj and \(\Bumor \colon \ufunc(\tobj) \to \sabs{\mVo{\subfunc}(\uFVtobj)}\) is a forced \Bumor then there is a forced \Aumor \(\Aumor' \colon \ufunc(\tobj) \to \sabs{\uFVtobj}\) such that \(\subfunc(\Aumor') = \Bumor\).
This is not a simple condition.
Firstly, we observe that we can only force \Aumors that lie in the image of \(\subfunc\).
Secondly, there is a potential problem if more than one \AuFVtobj maps to the same \BuVtobj: we need to know that a given forced \Bumor has a forced preimage for each of the preimages of the \BuVtobj.
This is particularly tricky when the \BuVtobj under question is the natural image of a \tobj since there we have additional restrictions on which \Bumors can be forced.
Thirdly, when testing whether a \Bumor is forced, we may need to use trials that have target (or source) outside the image of \(\subfunc\).
Thus it may be that a \Bumor would not be forced if we only considered those trials in the image of \(\subfunc\) but would be forced if we allowed all trials.

At the loss of some generality we can simplify matters a little.
Let \((\mBo{\iforce}, \mBo{\oforce})\) be a forcing condition on \BuVtcat.
We apply \(\subfunc\) to \rcat; this is defined by sending \((\tmor, \uVtmor)\) to \((\tmor, \mVo{\subfunc}(\uVtmor))\).
The temptation at this point would be to define \((\mAo{\iforce}, \mAo{\oforce})\) by composition.
However, this does not help resolve the potential difficulties given above.
Rather, we define
\[
  \mAo{\iforce}(\tobj, \AuVtobj, \m{F}) = \max\{\mBo{\iforce}(\tobj, \mVo{\subfunc}(\AuVtobj), \m{F}') : \m{F}' \cap \im \subfunc = \subfunc(\m{F}) \}.
\]
We define \(\mAo{\oforce}\) similarly.

\begin{lemma}
\label{lem:injforce}
\((\mAo{\iforce}, \mAo{\oforce})\) is a forcing condition.
If \(\subfunc\) is injective on \Aumors out of \(\im \ufunc\) then \(\mAo{\iforce}\) has the property that for a \Aumor, \(\Aumor \colon \ufunc(\tobj) \to \forfunc[\AuVtobj]\), \(\Aumor\) is forced if \(\subfunc(\Aumor)\) is forced.
Similarly, if \(\subfunc\) is injective on \Aumors into \(\im \ufunc\) then \(\mAo{\oforce}\) has the property that for a \Aumor, \(\Aumor \colon \forfunc[\AuVtobj] \to \ufunc(\tobj)\), \(\Aumor\) is forced if \(\subfunc(\Aumor)\) is forced.
\end{lemma}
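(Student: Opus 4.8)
The plan is to handle the three assertions separately, reducing the first to the monotonicity and guard already enjoyed by \((\mBo{\iforce}, \mBo{\oforce})\), and settling the last two by a direct comparison of the relevant classes of trials.

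First I would record the reformulation that makes the maximum in the definition tractable. Because \(\{0 \to 1\}\) is a two\hyp{}element lattice and \(\mBo{\iforce}\) is monotone under enlargement of families, the maximum defining \(\mAo{\iforce}(\tobj, \AuVtobj, \m{F})\) is attained at the largest admissible competitor, namely \(\subfunc(\m{F})\) together with every trial from \(\tobj\) to \(\mVo{\subfunc}(\AuVtobj)\) that does not lie in \(\im \subfunc\); write \(\m{F}^\ast\) for this family, so that \(\mAo{\iforce}(\tobj, \AuVtobj, \m{F}) = \mBo{\iforce}(\tobj, \mVo{\subfunc}(\AuVtobj), \m{F}^\ast)\). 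From this, monotonicity of \(\mAo{\iforce}\) under enlargement of \(\m{F}\) at a fixed pair is immediate, since \(\m{F} \subseteq \tilde{\m{F}}\) gives \(\m{F}^\ast \subseteq \tilde{\m{F}}^\ast\). To upgrade this to full functoriality on \(\Arcat\) I would check that \(\mVo{\subfunc}\) carries an \(\Arcat\)\hyp{}morphism \((\tmor, \uVtmor)\) to a \(\Brcat\)\hyp{}morphism between the corresponding \(\m{F}^\ast\)\hyp{}decorated objects; functoriality of \(\mBo{\iforce}\) would then transport. For the guard I would use that \(\mVo{\subfunc}\sfunc = \sfunc\), so that \(\mAo{\iforce}(\tobj_1, \sfunc(\tobj_2), \m{F}) = \mBo{\iforce}(\tobj_1, \sfunc(\tobj_2), \m{F}^\ast)\); by the guard for \(\mBo{\iforce}\) this is \(0\) unless the identity trial lies in \(\m{F}^\ast\), which, as that trial lies in \(\im \subfunc\), forces it to lie in \(\subfunc(\m{F})\) and hence already in \(\m{F}\). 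The output half is the formal dual.

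The hard part will be the functoriality step just named, and it is precisely the point where the absence of any fullness or faithfulness hypothesis on \(\subfunc\) bites. A trial of \(\mVo{\subfunc}(\uVtmor)\,\m{F}_1^\ast\,\tmor\) lying outside \(\im \subfunc\) is automatically in \(\m{F}_2^\ast\), so the whole issue is to show that a trial of the form \(\subfunc(\cdot)\) that is compatible with some member of \(\m{F}_1^\ast\) already lies in \(\subfunc(\m{F}_2)\). Here I would exploit that the \(\tmor\)\hyp{}components of trials live in \(\tcat\) and are untouched by \(\subfunc\), and try to argue that the compatibility witnesses constrain the \(\uVtmor\)\hyp{}components tightly enough to descend along \(\subfunc\) to the inclusion \(\uVtmor\,\m{F}_1\,\tmor \subseteq \m{F}_2\) defining the original \(\Arcat\)\hyp{}morphism; the genuinely awkward case, where an image trial is compatible with a non\hyp{}image member of \(\m{F}_1^\ast\), is where I expect to have to work, and the same analysis disposes of the only troublesome preimage in the guard, the identity trial.

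The injectivity clauses are cleaner and I would prove them last. Fix a \Aumor{} \(\Aumor \colon \ufunc(\tobj) \to \forfunc[\AuVtobj]\) with \(\subfunc(\Aumor)\) forced by \(\mBo{\iforce}\), that is, \(\mBo{\iforce}(\tobj, \mVo{\subfunc}(\AuVtobj), \trials{\subfunc(\Aumor)}) = 1\). To conclude \(\mAo{\iforce}(\tobj, \AuVtobj, \trials{\Aumor}) = 1\) it is enough to produce a single admissible competitor of value \(1\), and \(\trials{\subfunc(\Aumor)}\) is the obvious candidate; so I must verify \(\trials{\subfunc(\Aumor)} \cap \im \subfunc = \subfunc(\trials{\Aumor})\). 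For a trial \((\tmor, \uVtmor)\) from \(\tobj\) to \(\AuVtobj\), with \(\tmor \colon \tobj' \to \tobj\) and \(\uVtmor \colon \AuVtobj \to \AuVtobj'\), set \(g = \abs{\uVtmor}\,\Aumor\,\ufunc(\tmor)\); then \(\Aumor\) succeeds at \((\tmor, \uVtmor)\) exactly when \(g \in \itest_{\AuVtobj'}(\tobj')\), while \(\subfunc(\Aumor)\) succeeds at the image trial \((\tmor, \mVo{\subfunc}(\uVtmor))\) exactly when \(\subfunc(g) \in \itest_{\mVo{\subfunc}(\AuVtobj')}(\tobj') = \subfunc\big(\itest_{\AuVtobj'}(\tobj')\big)\). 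Since \(g\) issues from \(\ufunc(\tobj') \in \im \ufunc\), the hypothesis that \(\subfunc\) is injective on \Aumors out of \(\im \ufunc\) makes these two conditions equivalent: the inclusion \(\supseteq\) uses only that success is preserved by \(\subfunc\), whereas \(\subseteq\) is exactly where injectivity is spent. Hence \(\trials{\subfunc(\Aumor)}\) is admissible and forces \(\Aumor\), and the statement for \(\mAo{\oforce}\) follows by the dual argument using injectivity on \Aumors into \(\im \ufunc\).
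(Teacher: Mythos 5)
Your proposal is correct and, on every clause the paper actually argues, it follows the paper's own route: for the injectivity statements you use exactly the paper's competitor \(\trials{\subfunc(\Aumor)}\), establish its admissibility via the identity \(\itest_{\mVo{\subfunc}(\AuVtobj')}(\tobj') = \subfunc\bigl(\itest_{\AuVtobj'}(\tobj')\bigr)\), and spend the injectivity of \(\subfunc\) on morphisms out of \(\im \ufunc\) at precisely the same point (indeed you state the identity \(\trials{\subfunc(\Aumor)} \cap \im\subfunc = \subfunc(\trials{\Aumor})\) in its correct form, where the paper's text has a small slip), and your guard argument\emhyp{}the identity trial lies in \(\im\subfunc\), so its membership in an admissible \(\m{F}'\) reduces to membership in \(\subfunc(\m{F})\), using \(\mVo{\subfunc}\sfunc = \sfunc\)\emhyp{}is the paper's argument verbatim. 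The one genuine difference is organisational: your reformulation of the maximum as evaluation at the largest admissible competitor \(\m{F}^\ast\) is harmless (it is justified by the paper's remark that supersets of sufficient families are sufficient), but it makes functoriality look like the crux, whereas the paper simply declares that \(\mAo{\iforce}\) and \(\mAo{\oforce}\) are functors and gives no detail; so your flagged\hyp{}but\hyp{}unfinished compatibility analysis leaves you at no less rigour than the paper's own proof. For what it is worth, the difficulty you isolate is real\emhyp{}an image trial can be compatible with a non\hyp{}image member of \(\m{F}_1^\ast\), or compatible with an image trial only through a witness \(\uVtmor'\) lying outside \(\im\subfunc\), and without fullness of \(\subfunc\) such a trial need not descend to \(\subfunc(\m{F}_2)\)\emhyp{}and it is of a piece with the obstructions the paper itself enumerates in the paragraph preceding the definition of \((\mAo{\iforce}, \mAo{\oforce})\), so noting it is a point in your favour rather than a gap relative to the paper.
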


\begin{proof}
It is straightforward to show that \(\mAo{\iforce}\) and \(\mAo{\oforce}\) are functors.
Thus we consider the ``non\hyp{}stupid'' condition.
This is immediate from the fact that for \tobjs \(\tobj_1\) and \(\tobj_2\), \((1_{\tobj_1}, 1_{\sfunc(\tobj_2)}) \in \im \subfunc\) so if \((\tobj_1, \sfunc(\tobj_2), \m{F})\) is such that \((1,1) \notin \m{F}\) and \((\tobj_1, \sfunc(\tobj_2), \m{F}')\) is such that \(\m{F}' \cap \im \subfunc = \subfunc(\m{F})\) then \((1,1) \notin \m{F}'\) and so \(\mBo{\iforce}(\tobj_1, \sfunc(\tobj_2), \m{F}') = 0\).
Thus \(\mAo{\iforce}(\tobj_1, \sfunc(\tobj_2), \m{F}) = 0\).
The case for \(\mAo{\oforce}\) is similar.

Let \(\Aumor \colon \ufunc(\tobj) \to \forfunc[\AuVtobj]\) be a \Aumor.
We need to show that, under the stated condition, \(\trials[_2]{\subfunc(\Aumor)} \cap \im \subfunc = \trials[_1]{\subfunc(\Aumor)}\).
It is clear that \(\subfunc(\trials[_1]{\Aumor}) \subseteq \trials[_2]{\subfunc(\Aumor)}\).
Let \((\tmor, \AuVtmor)\) be a trial for \(\Aumor\) such that \(\subfunc(\Aumor)\) succeeds at \((\tmor, \mVo{\subfunc}(\AuVtmor))\).
Let \(\AuVtobj'\) be the target of \(\AuVtmor\).
Then \(\sabs{\mVo{\subfunc}(\AuVtmor)} \subfunc(\Aumor) \ufunc(\tmor) \in \itest_{\mVo{\subfunc}(\AuVtobj')}\).
By definition of \(\mVo{\subfunc}(\AuVtobj)\), this means that there is some \(\ifn \in \itest_{\AuVtobj'}\) such that \(\subfunc(\ifn) = \sabs{\mVo{\subfunc}(\AuVtmor)} \subfunc(\Aumor) \ufunc(\tmor)\).
The right\hyp{}hand side of this is equal to \(\subfunc(\sabs{\AuVtmor} \Aumor \ufunc(\tmor))\).
Thus, by the assumption, \(\ifn = \sabs{\AuVtmor} \Aumor \ufunc(\tmor)\) so \(\Aumor\) succeeds at the trial \((\tmor, \AuVtmor)\) as required.

The case for the outputs is similar.
\end{proof}

\begin{corollary}
If \(\subfunc\) is a full functor and is injective on \Aumors in and out of \(\im \ufunc\) then \(\mVo{\subfunc}\) takes \AuFVtobjs to \BuFVtobjs. \noproof
\end{corollary}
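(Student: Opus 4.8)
Given a \AuFVtobj \(\AuFVtobj\), the plan is to check directly that \(\mVo{\subfunc}(\AuFVtobj)\) meets both halves of the forcing condition \((\mBo{\iforce}, \mBo{\oforce})\) on \(\BuVtcat\); by symmetry I would write out only the input side and remark that the output side is identical after flipping arrows. The construction of \(\mVo{\subfunc}\) already guarantees that \(\mVo{\subfunc}(\AuFVtobj)\) is a genuine \BuVtobj with input test functor \(\subfunc\itest_{\AuFVtobj}\) and output test functor \(\subfunc\otest_{\AuFVtobj}\), so the only thing left is to confront forced morphisms. The three tools are: fullness of \(\subfunc\) to reflect a forced \Bumor back into \(\Aucat\), Lemma~\ref{lem:injforce} to transport ``forced'' backwards along \(\subfunc\), and the hypothesis that \(\AuFVtobj\) is closed under its own forcing condition \((\mAo{\iforce}, \mAo{\oforce})\).

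Concretely, I would fix a \tobj, \(\tobj\), and a \Bumor \(\Bumor \colon \ufunc(\tobj) \to \subfunc(\forfunc[\AuFVtobj])\) that is forced for \(\mBo{\iforce}\), aiming to show \(\Bumor \in \subfunc\itest_{\AuFVtobj}(\tobj) = \itest_{\mVo{\subfunc}(\AuFVtobj)}(\tobj)\). The observation that unlocks fullness is that both endpoints of \(\Bumor\) lie in \(\im\subfunc\): the standing compatibility \(\subfunc\ufunc = \ufunc\) gives \(\ufunc(\tobj) = \subfunc(\ufunc(\tobj))\), and the target \(\subfunc(\forfunc[\AuFVtobj])\) is visibly in the image. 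Hence fullness supplies an \Aumor \(\Aumor \colon \ufunc(\tobj) \to \forfunc[\AuFVtobj]\) with \(\subfunc(\Aumor) = \Bumor\). Since \(\subfunc\) is injective on \Aumors out of \(\im\ufunc\) and \(\subfunc(\Aumor) = \Bumor\) is forced for \(\mBo{\iforce}\), Lemma~\ref{lem:injforce} yields that \(\Aumor\) is forced for \(\mAo{\iforce}\); and because \(\AuFVtobj\) satisfies \((\mAo{\iforce}, \mAo{\oforce})\) this places \(\Aumor \in \itest_{\AuFVtobj}(\tobj)\), whence \(\Bumor = \subfunc(\Aumor) \in \subfunc\itest_{\AuFVtobj}(\tobj)\) as wanted. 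The output side runs the same way with ``out of'' replaced by ``into'': a forced \Bumor \(\Bumor \colon \subfunc(\forfunc[\AuFVtobj]) \to \ufunc(\tobj)\) lifts through fullness to some \(\Aumor\), the injectivity of \(\subfunc\) on \Aumors into \(\im\ufunc\) together with Lemma~\ref{lem:injforce} makes \(\Aumor\) forced for \(\mAo{\oforce}\), and closure of \(\AuFVtobj\) gives \(\Aumor \in \otest_{\AuFVtobj}(\tobj)\).

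I expect the only genuinely delicate point to be the step that licenses fullness, namely that a forced \Bumor necessarily has both endpoints in \(\im\subfunc\); everything else is bookkeeping with the two reflection principles already established. This is also precisely where the three difficulties flagged in the discussion preceding Lemma~\ref{lem:injforce} are dissolved: fullness removes the worry about which \Bumors can appear, since they all descend from \Aumors; the injectivity hypotheses are exactly what Lemma~\ref{lem:injforce} needs in order to carry ``forced'' backwards along \(\subfunc\); and the \(\max\) in the definition of \(\mAo{\iforce}\) ensures that trials with source or target outside \(\im\subfunc\) cannot obstruct the conclusion.
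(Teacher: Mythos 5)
Your proof is correct and follows exactly the route the paper intends for this \noproof{} corollary: use \(\subfunc\ufunc = \ufunc\) and fullness to lift a forced \Bumor to an \Aumor with the required endpoints, invoke Lemma~\ref{lem:injforce} (via the stated injectivity hypotheses) to conclude the lift is forced, and then apply the closure of the \AuFVtobjalt{} under its own forcing condition. Nothing is missing; your flagged ``delicate point'' is handled correctly by the compatibility \(\subfunc\ufunc = \ufunc\).
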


For example, if \(\subfunc\) is the inclusion of a full subcategory then we have this condition.
If we don't have this condition then the best strategy is to use one of the compositions
\[
  \AuFVtcat \to \AuVtcat \xrightarrow{\mVo{\subfunc}} \BuVtcat \xrightarrow{\Jforfunc, \Mforfunc} \BuFVtcat.
\] 
If we have one of the two conditions then there is an obvious choice for which of the forcing functors to choose.

\subsubsection{Topological Transfers}

The second way of changing the underlying category that we shall consider concerns the situation when we have a topological category over another category.
We assume the same set\hyp{}up as at the start of the previous part, namely two underlying categories \(\Aucat\) and \(\Bucat\) with a functor \(\subfunc \colon \Aucat \to \Bucat\) and a \tcat, \(\tcat\), with functors to \(\Aucat\) and \(\Bucat\) intertwined by \(\subfunc\).
Using the above, we obtain a functor \(\mVo{\subfunc} \colon \AuVtcat \to \BuVtcat\) which may, or may not, restrict to a functor on some subcategories of \uFVtobjs.

Now we assume, in addition, that \(\Aucat\) is topological over \(\Bucat\).

\begin{lemma}
\(\AuVtcat\) is topological over \(\BuVtcat\).
\end{lemma}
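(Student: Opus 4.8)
The plan is to show that the induced functor $\mVo{\subfunc} \colon \AuVtcat \to \BuVtcat$ is topological by exhibiting, for every structured source, a unique initial lift. Recall that a topological functor is faithful and amnestic, and by hypothesis $\subfunc \colon \Aucat \to \Bucat$ admits a unique $\subfunc$\hyp{}initial lift of every (possibly large) structured source in $\Bucat$. I shall use these lifts to build the underlying $\Aucat$\hyp{}object and faithfulness of $\subfunc$ to pin down the test functors and to promote underlying maps to genuine morphisms.

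The one point that needs thought is that the two families of test morphisms sit in opposite variance: the input morphisms point \emph{into} the underlying object and so can be handled by an initiality argument, whereas the output morphisms point \emph{out} of it and cannot. My device for dealing with this\emhyp{}and the only genuine subtlety in the argument\emhyp{}is to fold the output morphisms into the structured source. Concretely, given a \BuVtobj $\BuVtobj$ with $Y = \forfunc[\BuVtobj]$ and a structured source $(f_\lambda \colon \BuVtobj \to \mVo{\subfunc}(\AuVtobj_\lambda))_{\lambda \in \Lambda}$, I form in $\Bucat$ the structured source consisting of the underlying maps of all the $f_\lambda$ together with every output test morphism $\ofn \colon Y \to \ufunc(\tobj')$ of $\BuVtobj$, and let $X$ (with maps $g_\lambda \colon X \to \forfunc[\AuVtobj_\lambda]$) be its $\subfunc$\hyp{}initial lift, so that $\subfunc(X) = Y$.

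Next I lift the test functors of $\BuVtobj$ to $X$. Every output test morphism is a generator of the source, hence lifts to a unique $\Aucat$\hyp{}morphism $X \to \ufunc(\tobj')$; these lifts form $\otest$. Every input test morphism $\ifn \colon \ufunc(\tobj) \to Y$ lifts by initiality, since its composite with each generator lands in $\im \subfunc$: for the $f_\lambda$ because $f_\lambda \ifn$ is an input test morphism of $\mVo{\subfunc}(\AuVtobj_\lambda)$, and for the outputs because $\ofn \ifn \in \im \ufunc \subseteq \im \subfunc$ by the compatibility condition on $\BuVtobj$. These lifts form $\itest$. Faithfulness of $\subfunc$ makes each lift unique, whence $\subfunc \itest = \itest_{\BuVtobj}$ and $\subfunc \otest = \otest_{\BuVtobj}$; applying $\subfunc$ and cancelling likewise turns the compatibility condition for $\BuVtobj$ into that for $(X, \itest, \otest)$. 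Thus $(X, \itest, \otest)$ is a \AuVtobj with $\mVo{\subfunc}(X, \itest, \otest) = \BuVtobj$.

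It remains to check initiality. A short computation, obtained by applying $\subfunc$, using that each $f_\lambda$ is a morphism of $\BuVtcat$, and cancelling by faithfulness, shows that each $g_\lambda$ underlies a morphism $(X, \itest, \otest) \to \AuVtobj_\lambda$ in $\AuVtcat$. For the universal property, let $W$ be a \AuVtobj and let $t \colon \mVo{\subfunc}(W) \to \BuVtobj$ be a morphism of $\BuVtcat$ such that every composite $f_\lambda t$ lifts to a morphism in $\AuVtcat$; the underlying map of $t$ then composes with each $f_\lambda$ to a liftable map and with each output morphism $\ofn$ to an element of $\otest_{\mVo{\subfunc}(W)} \subseteq \im \subfunc$, so by $\subfunc$\hyp{}initiality of $X$ it lifts to an $\Aucat$\hyp{}morphism $\forfunc[W] \to X$, which one checks (again via $\subfunc$ and faithfulness) respects both test functors and hence is a morphism $W \to (X, \itest, \otest)$ in $\AuVtcat$. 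The reverse implication is immediate on composing with the $g_\lambda$. Uniqueness of this lift comes from faithfulness of $\subfunc$, and uniqueness of the lifted object from amnesticity of $\subfunc$; hence $\mVo{\subfunc}$ is topological.
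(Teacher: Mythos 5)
Your proof is correct and follows essentially the same route as the paper: both arguments fold the output test morphisms into the structured source alongside the underlying maps of the \(f_\lambda\), take the \(\subfunc\)\hyp{}initial lift of this family in \(\Aucat\), and then lift the input test morphisms by initiality since their composites with the generators land in \(\im\subfunc\). Your write\hyp{}up merely makes explicit the verification of the universal property that the paper leaves as ``straightforward''.
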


\begin{proof}
Let \(\BuVtobj\) be a \BuVtobj and let \(\BuVtmor_\lambda \colon \BuVtobj \to \mVo{\subfunc}(\AuVtobj_\lambda)\) be a \(\mVo{\subfunc}\)\enhyp{}structured source.

Let us consider the fibre of \(\mVo{\subfunc}\) at \(\BuVtobj\).
If \(\AuVtobj'\) is a \AuVtobj with \(\mVo{\subfunc}(\AuVtobj') = \BuVtobj\) then the input and output test functors of \(\AuVtobj'\) are completely determined by \(\BuVtobj\).
This is because we have, for example, \(\itest_{\BuVtobj} = \subfunc\itest_{\AuVtobj'}\) and \(\subfunc\) is faithful.
Thus the only possible variation in \(\AuVtobj'\) is in regard to \(\sabs{\AuVtobj'}\).
Here we must have \(\subfunc(\sabs{\AuVtobj'}) = \sabs{\BuVtobj}\).

Thus we are looking for a lift of \(\sabs{\BuVtobj}\) with certain properties.
Firstly, the \Bumors \(\ifn \colon \ufunc(\tobj) \to \sabs{\BuVtobj}\) and \(\ofn \colon \sabs{\BuVtobj} \to \ufunc(\tobj)\) for \(\ifn \in \itest_{\BuVtobj}(\tobj)\) and \(\ofn \in \otest_{\BuVtobj}(\tobj)\) must lift to \Aumors.
Secondly, the \BuVtmors \(\BuVtmor_\lambda \colon \BuVtobj \to \mVo{\subfunc}(\AuVtobj_\lambda)\) must lift to \AuVtmors.
However it is straightforward to see that if the first condition holds then the second is equivalent to lifting the \Bumors \(\sabs{\BuVtmor_\lambda} \colon \sabs{\BuVtobj} \to \subfunc(\sabs{\AuVtobj_\lambda})\) to \Aumors.

If we ignore the input test morphisms for the moment, then we have a family of \Bumors out of \(\sabs{\BuVtobj}\).
Since \(\Aucat\) is topological over \(\Bucat\) there is a unique inital lift of this family, say \(\Auobj\).
Now we see that this also lifts the input test morphisms since the composition of \(\ifn\) with either \(\ofn\) or with \(\sabs{\BuVtmor_\lambda}\) lifts to an \Aumor.
Hence \((\Auobj, \subfunc^{-1}\itest_{\BuVtobj}, \subfunc^{-1}\otest_{\BuVtobj})\) is the desired lift.
\end{proof}

\begin{corollary}
The functor \(\mVo{\subfunc} \colon \AuVtcat \to \BuVtcat\) has both a left and a right adjoint. \noproof
\end{corollary}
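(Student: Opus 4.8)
The plan is to read this off from the preceding Lemma together with the standard fact that a topological functor always admits both a left and a right adjoint; I would, however, exhibit the two adjoints explicitly, since each drops out of a (co)initial lifting property. For the right adjoint I would use the initial lifts furnished by the Lemma directly. Given a \BuVtobj \(\BuVtobj\), form the \(\mVo{\subfunc}\)\enhyp{}initial lift of the \emph{empty} structured source with vertex \(\BuVtobj\), and call the result \(I(\BuVtobj)\); by construction \(\mVo{\subfunc}(I(\BuVtobj)) = \BuVtobj\). The defining property of an initial lift, read off for the empty source, is that \emph{every} \BuVtmor \(g \colon \mVo{\subfunc}(\AuVtobj) \to \BuVtobj\) lifts uniquely along \(\mVo{\subfunc}\) to a \AuVtmor \(\AuVtobj \to I(\BuVtobj)\). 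Hence \(\mVo{\subfunc}\) induces a bijection \(\Hom{\AuVtcat}{\AuVtobj}{I(\BuVtobj)} \cong \Hom{\BuVtcat}{\mVo{\subfunc}(\AuVtobj)}{\BuVtobj}\), natural in \(\AuVtobj\), which is precisely the statement that \(I\) is right adjoint to \(\mVo{\subfunc}\) with the identity lifts as counit.

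For the left adjoint I would run the dual argument, replacing initial lifts of structured sources by final lifts of structured sinks: setting \(D(\BuVtobj)\) to be the final lift of the empty structured sink with vertex \(\BuVtobj\) yields, by the same reasoning with all arrows reversed, a natural bijection \(\Hom{\AuVtcat}{D(\BuVtobj)}{\AuVtobj} \cong \Hom{\BuVtcat}{\BuVtobj}{\mVo{\subfunc}(\AuVtobj)}\), exhibiting \(D\) as left adjoint. The gap to be filled here is that the Lemma only delivers initial lifts, whereas \(D\) needs final ones. I would close this gap by appealing to the self\hyp{}duality of topological functors; concretely, I would observe that the proof of the preceding Lemma dualises verbatim. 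One starts instead from a \(\mVo{\subfunc}\)\enhyp{}structured sink, uses that \(\Aucat\), being topological over \(\Bucat\), admits final as well as initial lifts, and notes once more that the input and output test functors of the lift are forced on us by \(\BuVtobj\) because \(\subfunc\) is faithful; the final lift in \(\Aucat\) then carries the required test data just as the initial lift did.

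I expect this passage to the final lifts to be the only genuine step; it is where the hypothesis that \(\Aucat\) is topological over \(\Bucat\) is used in its full strength rather than merely for initial structures. Everything else\emhyp{}functoriality of \(I\) and \(D\), and the naturality of the two adjunction bijections\emhyp{}follows formally from the uniqueness clauses in the initial and final lifting properties, so I would not write it out in detail.
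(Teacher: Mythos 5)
Your proof is correct and is precisely the argument the paper leaves implicit by omitting a proof: the preceding Lemma exhibits \(\mVo{\subfunc}\) as a topological functor, and the indiscrete and discrete functors\emhyp{}the initial lift of the empty structured source and the final lift of the empty structured sink\emhyp{}give the right and left adjoints via exactly the bijections you write down. The ``gap'' you flag is no real gap, since the self\hyp{}duality of topological functors (initial lifts for all structured sources are equivalent to final lifts for all structured sinks) is standard and, as you note, the Lemma's proof also dualises directly, so either route closes it.
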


\begin{defn}
Let us write \(\finfunc\) for the left adjoint and \(\coafunc\) for the right adjoint.
\end{defn}

The notation is to suggest ``finest'' and ``coarsest''.

Under favourable conditions these adjoints allow us to transfer forcing conditions neatly from \(\AuVtcat\) to \(\BuVtcat\).
The idea is that a \Bumor \(\Bumor \colon \ufunc(\tobj) \to \sabs{\BuVtobj}\) should be forced if it is forced for at least one of the preimages of \(\BuVtobj\).
But if it is forced for at least one, it will be forced for the maximal one.
Thus we need only test \(\coafunc(\BuVtobj)\).
Similarly, for input test morphisms we need only look at \(\finfunc(\BuVtobj)\).
The caveat is, as always, that we need to check the ``non\hyp{}stupid'' condition.

\begin{proposition}
Suppose that \(\finfunc\) and \(\coafunc\) satisfy \(\finfunc \sfunc = \sfunc\) and \(\coafunc \sfunc = \sfunc\).
Let \((\mAo{\iforce}, \mAo{\oforce})\) be a forcing condition on \AuVtobjs.
Define
\begin{align*}
  \mBo{\iforce}(\tobj, \BuVtobj, \m{F}) &\coloneqq \max\{\mAo{\iforce}(\tobj, \finfunc(\BuVtobj), \m{F}') : \m{F}' \cap \im \finfunc = \finfunc (\m{F})\}, \\
  \mBo{\oforce}(\tobj, \BuVtobj, \m{F}) &\coloneqq \max\{\mAo{\oforce}(\coafunc(\BuVtobj), \tobj, \m{F}') : \m{F}' \cap \im \coafunc = \coafunc (\m{F})\}.
\end{align*}
Then \((\mBo{\iforce}, \mBo{\oforce})\) is a forcing condition.
\end{proposition}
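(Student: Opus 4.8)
The plan is to check the two requirements from Definition~\ref{def:forcing}: that \(\mBo{\iforce}\) and \(\mBo{\oforce}\) are functors to \(\{0 \to 1\}\), and that each satisfies the ``non\hyp{}stupid'' clause. The input and output conditions are interchanged by the evident flip, with the left adjoint \(\finfunc\) playing for \(\mBo{\iforce}\) the role that the right adjoint \(\coafunc\) plays for \(\mBo{\oforce}\), so I would argue \(\mBo{\iforce}\) in full and obtain \(\mBo{\oforce}\) by symmetry. Throughout, the argument tracks the proof of Lemma~\ref{lem:injforce}, with \(\finfunc \colon \BuVtcat \to \AuVtcat\) in place of \(\mVo{\subfunc}\); the two hypotheses \(\finfunc\sfunc = \sfunc\) and \(\coafunc\sfunc = \sfunc\) are exactly what is needed to salvage the non\hyp{}stupid clause.

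For functoriality I would first record that \(\finfunc\), being a functor, induces a map on trials, sending \((\tmor, \BuVtmor)\) to \((\tmor, \finfunc(\BuVtmor))\), and that this respects the compatibility operation \(\uVtmor\,\m{F}\,\tmor\). Given an \(\rcat\)\hyp{}morphism \((\tmor, \BuVtmor) \colon (\tobj_1, \BuVtobj_1, \m{F}_1) \to (\tobj_2, \BuVtobj_2, \m{F}_2)\) whose source has \(\mBo{\iforce}\)\hyp{}value \(1\), I would take a witness \(\m{F}_1'\) over \(\finfunc(\BuVtobj_1)\) with \(\mAo{\iforce}\)\hyp{}value \(1\), transport it along \((\tmor, \finfunc(\BuVtmor))\), and combine it appropriately with \(\finfunc(\m{F}_2)\) to obtain an admissible competitor \(\m{F}_2'\) for the target pair; functoriality of \(\mAo{\iforce}\) then forces the target value to be \(1\), and monotonicity of the defining maximum finishes it. This is the routine bookkeeping that Lemma~\ref{lem:injforce} dispatches in a single line, and I would do likewise.

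The content is the non\hyp{}stupid clause: for test objects \(\tobj_1, \tobj_2\) and a family \(\m{F}\) of trials from \(\tobj_1\) to \(\sfunc(\tobj_2)\) with the identity trial \((1_{\tobj_1}, 1_{\sfunc(\tobj_2)}) \notin \m{F}\), I must show \(\mBo{\iforce}(\tobj_1, \sfunc(\tobj_2), \m{F}) = 0\). Here the hypothesis \(\finfunc\sfunc = \sfunc\) does double duty. First, it gives \(\finfunc(\sfunc(\tobj_2)) = \sfunc(\tobj_2)\), so that the inner values \(\mAo{\iforce}(\tobj_1, \finfunc(\sfunc(\tobj_2)), \m{F}')\) defining the maximum are evaluated at a genuine image \(\sfunc(\tobj_2)\)\emhyp{}precisely where the non\hyp{}stupid clause for \(\mAo{\iforce}\) is available. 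Second, it makes the identity trial equal to \(\finfunc\) of the identity trial over \(\BuVtcat\), hence an element of \(\im\finfunc\). I would then argue that each competitor \(\m{F}'\), constrained by \(\m{F}' \cap \im\finfunc = \finfunc(\m{F})\), must omit \((1,1)\), so that \(\mAo{\iforce}(\tobj_1, \sfunc(\tobj_2), \m{F}') = 0\) for every competitor and the maximum is \(0\); the output clause is identical with \(\coafunc\) and \(\coafunc\sfunc = \sfunc\).

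The one point genuinely needing care\emhyp{}and the step I expect to be the crux\emhyp{}is the claim that \((1,1) \notin \m{F}\) implies \((1,1) \notin \finfunc(\m{F})\), for only then does \((1,1) \in \im\finfunc\) together with the constraint on \(\m{F}'\) force \((1,1) \notin \m{F}'\). I would justify this by reflecting identities through \(\finfunc\): since \(\mVo{\subfunc}\) is topological its adjoints are sections, so \(\mVo{\subfunc}\finfunc\) is the identity on \(\BuVtcat\); consequently \(\finfunc(\BuVtmor) = 1_{\sfunc(\tobj_2)}\) yields \(\BuVtmor = \mVo{\subfunc}\finfunc(\BuVtmor) = 1_{\sfunc(\tobj_2)}\), so the identity trial has no \(\finfunc\)\hyp{}preimage but itself. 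Without \(\finfunc\sfunc = \sfunc\) the finest lift of \(\sfunc(\tobj_2)\) could fail to be \(\sfunc(\tobj_2)\), the non\hyp{}stupid clause of \(\mAo{\iforce}\) would no longer apply at the relevant pair, and a morphism between honest test objects could be forced that should not be\emhyp{}exactly the pathology the clause exists to forbid.
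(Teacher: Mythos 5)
Your proof is correct and follows the paper's argument essentially step for step: functoriality is dispatched as routine, and the non\hyp{}stupid clause is verified by using \(\finfunc\sfunc = \sfunc\) to place the inner evaluation at the pair \((\tobj, \sfunc(\tobj'))\), where the non\hyp{}stupid clause for \(\mAo{\iforce}\) applies, and then using \((1,1) \in \im\finfunc\) together with the constraint \(\m{F}' \cap \im \finfunc = \finfunc(\m{F})\) to conclude that \((1,1) \in \m{F}'\) if and only if \((1,1) \in \m{F}\), exactly as the paper does. Your closing observation that the identity trial has no \(\finfunc\)\hyp{}preimage but itself (via the section property \(\mVo{\subfunc}\finfunc = 1\)) correctly supplies the one detail the paper compresses into its ``if and only if''.
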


\begin{proof}
Functorality is immediate.
Let us check the ``non\hyp{}stupid'' condition.
Observe that the assumption on \(\finfunc\) ensures that
\[
  \mBo{\iforce}(\tobj, \sfunc(\tobj'), \m{F}) = \max\{\mAo{\iforce}(\tobj, \sfunc(\tobj'), \m{F}') : \m{F}' \cap \im \finfunc = \finfunc (\m{F})\}.
\]
Moreover, \((1,1) \in \im\finfunc\) so \((1,1) \in \m{F}'\) if and only if \((1, 1) \in \m{F}\).
Hence if \((1,1) \notin \m{F}\), \(\mBo{\iforce}(\tobj, \sfunc(\tobj'), \m{F}) = 0\).
The case of \(\mBo{\oforce}\) is similar.
\end{proof}

\begin{proposition}
\(\finfunc\) and \(\coafunc\) take \BuFVtobjs to \AuFVtobjs.
\end{proposition}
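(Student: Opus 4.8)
The plan is to exploit the symmetry of the construction together with the two adjunctions, reducing everything to a single verification. First I would invoke the flip that interchanges input with output and \(\finfunc\) with \(\coafunc\): under this flip the standing hypotheses, the definitions of \(\mBo{\iforce}\) and \(\mBo{\oforce}\), and the two assertions ``\(\finfunc\) sends \BuFVtobjs to \AuFVtobjs'' and ``\(\coafunc\) sends \BuFVtobjs to \AuFVtobjs'' are exchanged, so it suffices to treat \(\finfunc\). Next, since \(\mVo{\subfunc}\) is topological with left adjoint \(\finfunc\), the composite \(\mVo{\subfunc}\finfunc\) is the identity; hence \(\finfunc(\BuFVtobj)\) lies in the fibre of \(\mVo{\subfunc}\) over \(\BuFVtobj\). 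By the fibre description in the proof that \(\AuVtcat\) is topological over \(\BuVtcat\), the test functors of \(\finfunc(\BuFVtobj)\) are the unique \(\subfunc\)-lifts of those of \(\BuFVtobj\); as \(\subfunc\) is faithful and (from \(\subfunc\ufunc=\ufunc\)) reflects \(\im\ufunc\), the assignments \(\ifn\mapsto\subfunc(\ifn)\) and \(\ofn\mapsto\subfunc(\ofn)\) identify \(\itest_{\finfunc(\BuFVtobj)}\) with \(\itest_{\BuFVtobj}\) and \(\otest_{\finfunc(\BuFVtobj)}\) with \(\otest_{\BuFVtobj}\).

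The pivotal technical point is a trial correspondence. Using that \(\mVo{\subfunc}\finfunc\) is the identity and the fibre description, a \umor \(\umor\colon\ufunc(\tobj)\to\sabs{\finfunc(\BuFVtobj)}\) succeeds at a trial \((\tmor,\finfunc(\BuVtmor))\) if and only if \(\subfunc(\umor)\) succeeds at \((\tmor,\BuVtmor)\); consequently \(\trials{\umor}\cap\im\finfunc=\finfunc(\trials{\subfunc(\umor)})\). This follows by applying \(\subfunc\) to the composite defining success and noting that membership in a lifted test functor is detected by \(\subfunc\).

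For the input forcing condition this settles matters immediately. Given an \(\mAo{\iforce}\)-forced \(\umor\colon\ufunc(\tobj)\to\sabs{\finfunc(\BuFVtobj)}\), we have \(\mAo{\iforce}(\tobj,\finfunc(\BuFVtobj),\trials{\umor})=1\). Taking \(\m{F}'=\trials{\umor}\) in the definition of \(\mBo{\iforce}\), the correspondence above shows \(\m{F}'\cap\im\finfunc=\finfunc(\trials{\subfunc(\umor)})\), so that \(\mBo{\iforce}(\tobj,\BuFVtobj,\trials{\subfunc(\umor)})=1\); that is, \(\subfunc(\umor)\) is forced. Since \(\BuFVtobj\) satisfies \((\mBo{\iforce},\mBo{\oforce})\), we get \(\subfunc(\umor)\in\itest_{\BuFVtobj}\), and hence \(\umor\in\itest_{\finfunc(\BuFVtobj)}\) by the identification of the test functors.

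The main obstacle is the output forcing condition for \(\finfunc(\BuFVtobj)\), where the matching is imperfect: an \(\mAo{\oforce}\)-forced output \(\umor\) lives on the \emph{finest} lift \(\finfunc(\BuFVtobj)\), whereas \(\mBo{\oforce}\) is phrased through the \emph{coarsest} lift \(\coafunc(\BuFVtobj)\), and the output trials witnessing forcedness at \(\finfunc(\BuFVtobj)\) have target \(\finfunc(\BuFVtobj)\) and so do not lie in \(\im\coafunc\). To bridge this I would use the canonical comparison morphism \(c\colon\finfunc(\BuFVtobj)\to\coafunc(\BuFVtobj)\) obtained from the identity of \(\BuFVtobj\) under the adjunctions (covering the identity on underlying objects), transport the family of successful output trials along \(c\) to trials at \(\coafunc(\BuFVtobj)\), and combine monotonicity and functoriality of \(\mAo{\oforce}\) with the hypothesis \(\coafunc\sfunc=\sfunc\) (which, exactly as in the preceding proposition, governs the terminal and saturation edge cases) to produce a family \(\m{F}'\) with \(\m{F}'\cap\im\coafunc=\coafunc(\trials{\subfunc(\umor)})\) and \(\mAo{\oforce}(\coafunc(\BuFVtobj),\tobj,\m{F}')=1\). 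This gives that \(\subfunc(\umor)\) is \(\mBo{\oforce}\)-forced, whence \(\subfunc(\umor)\in\otest_{\BuFVtobj}\) and \(\umor\in\otest_{\finfunc(\BuFVtobj)}\). I expect the delicate step to be verifying the intersection-with-\(\im\coafunc\) bookkeeping in this transport, and this is precisely where I anticipate the assumptions on \(\finfunc\sfunc\) and \(\coafunc\sfunc\) to be genuinely required; the corresponding statements for \(\coafunc\) then follow by the flip.
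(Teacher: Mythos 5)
Your flip reduction and your treatment of the input condition coincide with what the paper actually does: its entire proof is the single observation that the argument of Lemma~\ref{lem:injforce} applies verbatim with \(\subfunc\) replaced by \(\finfunc\) (or \(\coafunc\)), full faithfulness supplying both the trials identity \(\trials{\umor}\cap\im\finfunc=\finfunc(\trials{\subfunc(\umor)})\) and the lifting of candidates, while the fibre description from the topologicity lemma makes the test functors lift-independent. Up to and including your third paragraph you have reconstructed that argument correctly.

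The output step, however, is a genuine gap, and it fails on variance. In the output-flipped \(\rcat\), a morphism from \((\uVtobj_1,\tobj,\m{F}_1)\) to \((\uVtobj_2,\tobj,\m{F}_2)\) requires a \uVtmor \(\uVtobj_2\to\uVtobj_1\), so functoriality of \(\mAo{\oforce}\) pushes sufficient families along the comparison \(c\colon\finfunc(\BuFVtobj)\to\coafunc(\BuFVtobj)\) \emph{from} \(\coafunc(\BuFVtobj)\) \emph{to} \(\finfunc(\BuFVtobj)\), never the other way. Postcomposing the trial morphisms of \(\trials{\umor}\) with \(c\) does yield a family of trials at \(\coafunc(\BuFVtobj)\), but no instance of functoriality or monotonicity makes that family \(\mAo{\oforce}\)-sufficient, and the hypothesis \(\coafunc\sfunc=\sfunc\) (used only for the non-stupidity check in the preceding proposition) cannot supply this. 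The transport that does exist goes through the candidate morphism rather than the trial family: precomposing with \(\sabs{c}\), which covers the identity, shows that output-forcedness flows toward \(\finfunc\), and dually input-forcedness flows toward \(\coafunc\). The cross terms therefore close up precisely when the derived condition tests inputs at \(\coafunc(\BuVtobj)\) and outputs at \(\finfunc(\BuVtobj)\)\emhyp{}the pairing asserted in the paper's motivating sentence (``we need only test \(\coafunc(\BuVtobj)\)'' for morphisms into the object) and the opposite of the displayed formulas. Taken literally, the displayed pairing leaves your output step unprovable at this level of generality (it does hold for the saturation-type conditions actually used, whose generating trials are detected after applying \(\subfunc\)). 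So your diagnosis that the \(\finfunc\)/\(\coafunc\) mismatch is the crux is correct, but the bridge you sketch runs against the only available direction of transport; the repair is to move the forced morphism to the extreme lift through which the condition is defined, not to move the family of trials.
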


\begin{proof}
This follows from the argument in lemma~\ref{lem:injforce} since both \(\finfunc\) and \(\coafunc\) are full and faithful.
\end{proof}

\section{Functors in the Wild}
\label{sec:funwild}

We can now define functors between the various categories of smooth spaces.
We define the following notation for the categories of test spaces (morphisms are always \cimaps):
\begin{itemize}
\item \(\tccat\): objects are convex subsets of Euclidean spaces.
\item \(\tdcat\): objects are open subsets of Euclidean spaces.
\item \(\tkcat\): objects are Euclidean spaces.
\item \(\tfcat\): single object, \R.
\end{itemize}

We define the following notation for the forcing conditions:
\begin{itemize}
\item \(\emptyset\): no condition.
\item \(\Term\): the terminal condition.
\item \(\Sheaf\): the sheaf condition.
\item \(\Detm\): the determination condition.
\item \(\Sat\): the saturation condition.
\end{itemize}
We specify a forcing condition as (input, output) where both are lists of conditions which are to be ``anded'' together.
The underlined categories correspond to the ones in section~\ref{sec:smthwild}, the others are intermediate ones put in to enable us to define the functors.
We denote \ycat by \(\ycat\) and the functors between \(\ycat\) and \(\xcat\) by the obvious notations \(\indfunc, \disfunc \colon \xcat \to \ycat\) and \(\forfunc \colon \ycat \to \xcat\).

\begin{centre}
\begin{tikzpicture}[node distance=1.8cm, auto,>=latex', thick]
\path[use as bounding box] (-2.5cm,.5cm) rectangle (8.5cm,-10cm);
\node (chen) {\(\big(\underline{\xcat, \tccat, (\Sheaf \Term,\Sat)}\big)\)};
\node[moreright] (chenr) at (chen.east) {\(\big(\underline{\xcat, \tccat, (\Sheaf \Term,\Sat)}\big)\)};
\node[moreleft] (chenl) at (chen.west) {\(\big(\underline{\xcat, \tccat, (\Sheaf \Term,\Sat)}\big)\)};
\node[below of=chen] (chso) {\(\big(\xcat, \tdcat \cap \tccat, (\Sheaf \Term,\Sat)\big)\)};
\node[moreright] (chsor) at (chso.east) {\(\big(\xcat, \tdcat \cap \tccat, (\Sheaf \Term,\Sat)\big)\)};
\node[moreleft] (chsol) at (chso.west) {\(\big(\xcat, \tdcat \cap \tccat, (\Sheaf \Term,\Sat)\big)\)};
\node[below of=chso] (sour) {\(\big(\underline{\xcat, \tdcat, (\Sheaf \Term,\Sat)}\big)\)};
\node[moreright] (sourr) at (sour.east) {\(\big(\underline{\xcat, \tdcat, (\Sheaf \Term,\Sat)}\big)\)};
\node[moreleft] (sourl) at (sour.west) {\(\big(\underline{\xcat, \tdcat, (\Sheaf \Term,\Sat)}\big)\)};
\node[node distance=6cm, right of=sour] (siko) {\(\big(\underline{\ycat, \tkcat, (\Sat, \Sheaf  \Detm \Term)}\big)\)};
\node[moreright] (sikor) at (siko.east) {\(\big(\underline{\ycat, \tkcat, (\Sat, \Sheaf  \Detm \Term)}\big)\)};
\node[moreleft] (sikol) at (siko.west) {\(\big(\underline{\ycat, \tkcat, (\Sat, \Sheaf  \Detm \Term)}\big)\)};
\node[below of=sour] (sofr) {\(\big(\xcat, \tdcat, (\Sat,\Sat)\big)\)};
\node[moreright] (sofrr) at (sofr.east) {\(\big(\xcat, \tdcat, (\Sat,\Sat)\big)\)};
\node[moreleft] (sofrl) at (sofr.west) {\(\big(\xcat, \tdcat, (\Sat,\Sat)\big)\)};
\node[below of=siko] (sism) {\(\big(\ycat, \tkcat, (\Sat, \Sat)\big)\)};
\node[moreright] (sismr) at (sism.east) {\(\big(\ycat, \tkcat, (\Sat, \Sat)\big)\)};
\node[moreleft] (sisml) at (sism.west) {\(\big(\ycat, \tkcat, (\Sat, \Sat)\big)\)};
\node[below of=sofr] (frol) {\(\big(\underline{\xcat, \tfcat, (\Sat, \Sat)}\big)\)};
\node[moreright] (frolr) at (frol.east) {\(\big(\underline{\xcat, \tfcat, (\Sat, \Sat)}\big)\)};
\node[moreleft] (froll) at (frol.west) {\(\big(\underline{\xcat, \tfcat, (\Sat, \Sat)}\big)\)};
\node[moreabove] (frolu) at (frol.north) {\(\big(\underline{\xcat, \tfcat, (\Sat, \Sat)}\big)\)};
\node[morebelow] (frold) at (frol.south) {\(\big(\underline{\xcat, \tfcat, (\Sat, \Sat)}\big)\)};
\node[below of=sism] (smit) {\(\big(\underline{\ycat, \tfcat, (\Sat, \Sat)}\big)\)};
\node[moreright] (smitr) at (smit.east) {\(\big(\underline{\ycat, \tfcat, (\Sat, \Sat)}\big)\)};
\node[moreleft] (smitl) at (smit.west) {\(\big(\underline{\ycat, \tfcat, (\Sat, \Sat)}\big)\)};
\node[moreabove] (smitu) at (smit.north) {\(\big(\underline{\ycat, \tfcat, (\Sat, \Sat)}\big)\)};
\node[morebelow] (smitd) at (smit.south) {\(\big(\underline{\ycat, \tfcat, (\Sat, \Sat)}\big)\)};
\node[below of=frol] (frdi) {\(\big(\xcat, \tfcat, (\emptyset, \emptyset)\big)\)};
\node[moreright] (frdir) at (frdi.east) {\(\big(\xcat, \tfcat, (\emptyset, \emptyset)\big)\)};
\node[moreleft] (frdil) at (frdi.west) {\(\big(\xcat, \tfcat, (\emptyset, \emptyset)\big)\)};
\node[below=8.5pt, style=transparent] (frdiu) at (frdi.north) {\(\big(\xcat, \tfcat, (\emptyset, \emptyset)\big)\)};
\node[above=8.5pt, style=transparent] (frdid) at (frdi.south) {\(\big(\xcat, \tfcat, (\emptyset, \emptyset)\big)\)};
\node[below of=smit] (smdi) {\(\big(\ycat, \tfcat, (\emptyset, \emptyset)\big)\)};
\node[moreright] (smdir) at (smdi.east) {\(\big(\ycat, \tfcat, (\emptyset, \emptyset)\big)\)};
\node[moreleft] (smdil) at (smdi.west) {\(\big(\ycat, \tfcat, (\emptyset, \emptyset)\big)\)};
\node[below=8.5pt, style=transparent] (smdiu) at (smdi.north) {\(\big(\ycat, \tfcat, (\emptyset, \emptyset)\big)\)};
\node[above=8.5pt, style=transparent] (smdid) at (smdi.south) {\(\big(\ycat, \tfcat, (\emptyset, \emptyset)\big)\)};
\path[->] (chsol) edge node[diaglabel,swap] {\(\Jextfunc\)} (chenl)
 (chsor) edge node[diaglabel] {\(\Mextfunc\)} (chenr)
 (chen) edge node[diaglabel,fill=white,auto=false] {\(\resfunc\)} (chso)
 (chsol) edge node[diaglabel] {\(\Jextfunc\)} (sourl)
 (chsor) edge node[diaglabel,swap] {\(\Mextfunc\)} (sourr)
 (sour) edge node[diaglabel,fill=white,auto=false] {\(\resfunc\)} (chso)
 (froll) edge node[diaglabel,swap] {\(\Jextfunc\)} (sofrl)
 (frolr) edge node[diaglabel] {\(\Mextfunc\)} (sofrr)
 (sofr) edge node[diaglabel,fill=white,auto=false] {\(\resfunc\)} (frol)
 (smitl) edge node[diaglabel,swap] {\(\Jextfunc\)} (sisml)
 (smitr) edge node[diaglabel] {\(\Mextfunc\)} (sismr)
 (sism) edge node[diaglabel,fill=white,auto=false] {\(\resfunc\)} (smit)
 (sourl) edge node[diaglabel] {\(\Mforfunc\)} (sofrl)
 (sourr) edge node[diaglabel,swap] {\(\Jforfunc\)} (sofrr)
 (sofr) edge node[diaglabel,fill=white,auto=false] {\(\incfunc\)} (sour)
 (frdil) edge node[diaglabel,swap] {\(\Mforfunc\)} (froll)
 (frdir) edge node[diaglabel] {\(\Jforfunc\)} (frolr)
 (frol) edge node[diaglabel,fill=white,auto=false] {\(\incfunc\)} (frdi)
 (sikol) edge node[diaglabel] {\(\Mforfunc\)} (sisml)
 (sikor) edge node[diaglabel,swap] {\(\Jforfunc\)} (sismr)
 (sism) edge node[diaglabel,fill=white,auto=false] {\(\incfunc\)} (siko)
 (smdil) edge node[diaglabel,swap] {\(\Mforfunc\)} (smitl)
 (smdir) edge node[diaglabel] {\(\Jforfunc\)} (smitr)
 (smit) edge node[diaglabel,fill=white,auto=false] {\(\incfunc\)} (smdi)
 (frolu) edge node[diaglabel,swap] {\(\coafunc\)} (smitu)
 (frold) edge node[diaglabel] {\(\finfunc\)} (smitd)
 (frdiu) edge node[diaglabel,swap] {\(\mVo{\disfunc}\)} (smdiu)
 (frdid) edge node[diaglabel] {\(\mVo{\indfunc}\)} (smdid)
 (smdi) edge node[diaglabel,fill=white,auto=false] {\(\mVo{\forfunc}\)} (frdi)
;
\end{tikzpicture}
\end{centre}

There are various adjunctions and isomorphisms that can be read off this diagram.
Firstly we note that, due to Boman's theorem in \cite{jb3}, under the saturation condition then the extension functors always agree.
Thus restriction and extension define isomorphisms of categories
\[
  \big(\xcat, \tfcat, (\Sat, \Sat)\big) \cong \big(\xcat, \tdcat,(\Sat,\Sat)\big)
\]
and
\[
  \big(\ycat, \tfcat, (\Sat, \Sat)\big) \cong \big(\ycat, \tkcat, (\Sat,\Sat)\big).
\]
Furthermore, since any open subset of Euclidean space is locally convex, the sheaf condition is sufficient to ensure that restriction and extension define an isomorphism of categories
\[
  \big(\xcat, \tdcat \cap \tccat, (\Sheaf \Term,\Sat)\big) \cong      \big(\xcat, \tdcat, (\Sheaf \Term,\Sat)\big).
\]

The extension functors are always adjoint to the restriction functors and thus the functor
\[
  \big(\xcat, \tccat, (\Sheaf \Term,\Sat)\big) \to \big(\xcat, \tdcat, (\Sheaf \Term,\Sat)\big)
\]
has both a left and right adjoint.

Saturation is sufficient to ensure that proposition~\ref{prop:foradj} holds, at least for one direction, and thus the functors (effectively given by inclusion)
\begin{align*}
  \big(\xcat, \tfcat, (\Sat, \Sat)\big) &\to \big(\xcat, \tdcat, (\Sheaf\Term, \Sat)\big) \\
  \big(\ycat, \tfcat, (\Sat, \Sat)\big) &\to \big(\ycat, \tkcat, (\Sat,\Sheaf\Detm\Term)\big)
\end{align*}
have adjoints.
In the first case, the adjoint is a left adjoint given by \(\Mforfunc\).
In the second case, the adjoint is a right adjoint given by \(\Jforfunc\).

As the functors are constructed in a fairly abstract manner, let us now give brief descriptions of how they actually work.
We shall use the usual descriptions of the spaces which, usually, means only considering one family of test morphisms.

\paragraph{Chen and Souriau.}

Let \((X, \m{P})\) be a Chen space.
For an open set \(U\) of some Euclidean space, let \((U, \m{P}_U)\) denote its natural structure as a Chen space; i.e.\ a map \(C \to U\) is a plot if and only if it is a \cimap.
Let \(\m{D}\) be the family of maps \(U \to X\) which underlie morphisms of Chen spaces \((U, \m{P}_U) \to (X, \m{P})\).
Then \((X, \m{D})\) is a Souriau space.

Conversely, let \((X, \m{D})\) be a Souriau space.
For a convex set \(C\) of some Euclidean space, let \((C, \m{D}_C)\) denote its natural structure as a Souriau space; i.e.\ a map \(U \to C\) is a plot if and only if it is a \cimap.
Let \(\m{P}^+\) be the family of maps \(C \to X\) whichy underlie morphisms of Souriau spaces \((C, \m{D}_C) \to (X, \m{D})\).
Then \((X, \m{P}^+)\) is a Chen space.

Define a second family \(\m{P}^-\) on \(X\) by taking all the maps \(C \to X\) which locally factor through the maps in \(\m{D}\); as above, \(C\) runs over the family of convex subsets of Euclidean spaces.
Then \((X, \m{P}^-)\) is a Chen space.

\paragraph{Souriau and Fr\"olicher.}

Let \((X, \m{D})\) be a Souriau space.
Let \(\m{F}\) be the set of morphisms of Sourian spaces from \((X, \m{D})\) to \R, viewed as having its standard diffeology.
Let \(\m{C}\) be the family of curves determined by \(\m{F}\).
That is to say, \(\m{C}\) is the family of curves \(c \colon \R \to X\) with the property that \(f c \in \Ci(\R,\R)\) for all \(f \in \m{F}\).
Then \((X, \m{C}, \m{F})\) is a Fr\"olicher space.

Let \((X, \m{C}, \m{F})\) be a Fr\"olicher space.
Define a family \(\m{D}\) on \(X\) by taking all maps \(U \to X\) which underlie maps of Fr\"olicher spaces with \(U\) an open subset of a Euclidean space equipped with its obvious Fr\"olicher space structure.
Then \((X, \m{D})\) is a Souriau space.

\paragraph{Smith and Fr\"olicher.}

The case of Smith spaces and Fr\"olicher spaces is an interesting one because the definitions are so similar.
One way to phrase the difference is to say that in a Smith space the smooth functions are a subset of the continuous functions whilst in a Fr\"olicher space the continuous functions are a superset of the smooth functions.

Let \((X, \m{T}, \m{F})\) be a Smith space.
Let \(\m{C}_X\) be the set of maps \(c \colon \R \to X\) for which \(f c \in \Ci(\R, \R)\) for all \(f \in \m{F}\).
Let \(\m{F}_X\) be the set of maps \(f \colon X \to \R\) for which \(f c \in \Ci(\R,\R)\) for all \(c \in \m{C}_X\).
Then \((X, \m{C}, \m{F})\) is a Fr\"olicher space.

For the reverse direction, let \((X, \m{C}, \m{F})\) be a Fr\"olicher space.
To make this a Smith space we need to define a topology on \(X\).
There are two obvious choices: the \emph{curvaceous} topology on \(X\) and the \emph{functional} topology.
The first is the finest topology for which all the smooth curves are continuous.
The second is the coarsest topology for which all the smooth functionals are continuous.
Let us denote them by \(\m{T}_c\) and \(\m{T}_f\) respectively.
It is immediate that \(\m{T}_c\) is finer than \(\m{T}_f\).
Then \((X, \m{T}_c, \m{F})\) and \((X, \m{T}_f, \m{F})\) are Smith spaces.

\paragraph{Sikorski and Smith.}

Let \((X, \m{T}, \m{F})\) be a Sikorski space.
For \(U \subseteq \R^n\) an open subset, let \(\m{F}(U)\) be the family of continuous maps \(\phi \colon U \to X\) for which \(f \phi \in \Ci(U, \R)\) for all \(f \in \m{F}\).
Let \(\overline{\m{F}}\) be the family of continuous functions \(f \colon X \to \R\) for which \(f \phi \in \Ci(U, \R)\) for all \(\phi \in \m{F}(U)\) and for all open \(U \subseteq \R^n\).
Then \((X, \m{T}, \overline{\m{F}})\) is a Smith space.

Conversely, if \((X, \m{T}, \m{F})\) is a Smith space then it is automatically a Sikorski space.

\section{The Differences}
\label{sec:diff}

We saw in the previous section that the category of Fr\"olicher spaces is (isomorphic to) a full subcategory of that of Souriau spaces, likewise the category of Sourian spaces in that of Chen spaces, Fr\"olicher spaces in Smith spaces, and Smith spaces in Sikorski spaces.
An obvious question is whether any of these embeddings is dense.

As an aid to answering that, note that it is easy to characterise Souriau spaces and Chen spaces according to whether or not they have the same underlying Fr\"olicher space: simply examine the set of morphisms to \R.
In both categories, we regard \R as having its ``standard'' structure, namely the smallest which contains the identity map.

For Sikorski spaces and Smith spaces, the characterisation is slightly more problematical due to the topology.
One cannot simply examine the set of morphisms from \R with its standard Sikorski or Smith structure as these might not be all the smooth curves; the problem being that a smooth curve need not be continuous.
Thus one has to simply compute the family of smooth curves irrespective of the topology and this cannot be described in terms of morphism sets.
We shall comment further on topology later.

\medskip

Let us now construct non\hyp{}isomorphic objects in each of the categories which have the same underlying structure in the ``next category down''.
The most interesting of these examples is probably the one showing that Chen spaces and Souriau spaces are not the same.

We shall use the following notation.
\begin{itemize}
\item \(\fcat\) for \fcat.
\item \(\ccat\) for \ccat.
\item \(\dcat\) for \dcat.
\item \(\kcat\) for \kcat.
\item \(\scat\) for \scat.
\end{itemize}
We shall label the functors between them by the \emph{target} category.
For all but two pairs there is a single sensible functor in each direction.
There are two functors from \dcat to \ccat and from \fcat to \scat.
The latter differ only in a very minor way and can be treated together so we shall not specify which is which.
Thus
\begin{itemize}
\item \(\FrSo\colon \fcat \to \dcat\)
\item \(\SoFr\colon \dcat \to \fcat\)
\item \(\SoCht\colon \dcat \to \ccat\) (corresponding to \(\Jextfunc\))
\item \(\SoChb\colon \dcat \to \ccat\) (corresponding to \(\Mextfunc\))
\item \(\ChSo\colon \ccat \to \dcat\)
\item \(\FrSm\colon \fcat \to \scat\)
\item \(\SmFr\colon \scat \to \fcat\)
\item \(\SmSi\colon \scat \to \kcat\)
\item \(\SiSm\colon \kcat \to \scat\)
\end{itemize}

\paragraph{Smith spaces.}
For Smith spaces, consider \R with its usual topology and the set of all continuous functions versus \R with the discrete topology and the set of all continuous functions (i.e.\ continuous for the discrete topology, whence all functions).
The smooth curves for the latter are the constant curves.
Let us show that the same holds for the former.
Firstly, observe that if \(c \colon \R \to \R\) is a map such that \(f c \in \Ci(\R,\R)\) for all continuous functions \(f \colon \R \to \R\) then \(c\) is a \cimap as the identity is continuous.
Suppose that \(c \colon \R \to \R\) is a \cimap which is not constant.
Then \(c'(t) \ne 0\) for some \(t \in \R\) and so there is an interval around \(t\) on which the restriction of \(c\) is a diffeomorphism.
Thus if \(f \colon \R \to \R\) is a map for which \(f c \in \Ci(\R,\R)\) then \(f\) must be a \cimap in a neighbourhood of \(c(t)\).
As there is a continuous map which is not a \cimap in this neighbourhood, \(c\) cannot be a smooth curve in the Fr\"olicher structure underlying this Smith space.
Thus all the smooth curves in this space are constant.
Hence these two different Smith spaces define the same underlying Fr\"olicher space.

\paragraph{Sikorski spaces.}
In the above, it sufficed that there was a functional that was non\hyp{}smooth at each point in \R.
From this we see that if \(\m{F}\) is a family of functions \(\R \to \R\) which contains the identity, is translation\hyp{}invariant, and contains a non\hyp{}smooth function, then the only plots \(\phi \colon U \to \R\) are the constant ones.
The corresponding Smith space of such a family thus has family of functions all the continuous functions.
One can find a family of functions \(\m{F}\) satisfying these conditions and the conditions for a Sikorski structure on \R which is not the set of continuous functions.
For example, one of the families of piecewise\hyp{}smooth functions will do (there are various possibilities, any will do here).
Thus the inclusion of Smith spaces in Sikorski spaces is not dense.

\paragraph{Souriau spaces.}
As Souriau spaces, \(\R^2\) with its standard diffeology and with its ``wire'' diffeology (generated by the smooth curves) are not isomorphic.
Nonetheless, by Boman's result, \cite{jb3}, they have the same smooth functionals and thus the same underlying Fr\"olicher structure.

\paragraph{Chen spaces.}
Let us now compare Chen spaces and Souriau spaces.
We can show that they are different by exhibiting a Souriau space whose images under the two extension functors are different.

This Souriau space is \([0,1]\) with its usual diffeology.
The Chen space \(\SoCht([0,1])\) is easily seen to be the standard Chen space structure on \([0,1]\).
In particular, it contains the identity on \([0,1]\).
On the other hand, every plot in \(\SoChb([0,1])\) factors through a \cimap \(U \to [0,1]\) from an open subset of some Euclidean space into \([0,1]\).
The identity on \([0,1]\) does not have this property: any factorisation of the inclusion \([0,1] \to \R\) via some open set \(U\) must extend outside \([0,1]\).
Hence \(\SoCht([0,1]) \ne \SoChb([0,1])\).

One can extend this example to see that the main difference between Chen spaces and Souriau spaces is the ability to ``approach boundary points at speed''.
In the Chen realm, one can approach a point at speed and stop.
In the Souriau realm, one must always be able to go a little further.
Now suppose that one wishes to declare that a certain point cannot be approached from certain directions.
One therefore wishes to consider one\hyp{}sided derivatives at those points.
In the Chen realm, this presents no difficulties: one approaches at a steady speed along those directions that one is allowed to approach along.
In the Souriau realm, this is more problematical.
One has to approach along an allowed direction and then ``bounce back'', so ones speed has to approach zero as one nears the point of interest.

However, due to the result in \cite[24.5]{akpm} this difference is illusory.
One can define one\hyp{}sided derivatives in the Souriau realm using functions that are required to ``bounce back''.

\section{More on Adjunctions}
\label{sec:adjoint}

In section~\ref{sec:funwild} we had several adjunctions.
In this section we shall consider whether any other adjunctions are possible.
Our conclusion will be that there aren't.
One thing worth highlighting is that the examples in this section are not overly complicated.
All have very simple underlying sets, often \R, and reasonably simple structure.

\paragraph{Fr\"olicher and Smith.}
One surprise is that that there is no an adjunction pairing between the categories of Fr\"olicher spaces and Smith spaces.
One would expect an adjunction \(\SmFr \adjoint \FrSm\) since if \((X, \m{T}, \m{F})\) is a Smith space and
 \((X, \m{T}', \m{F}') = \SmFr\FrSm(X,\m{T},\m{F})\) then \(\m{F} \subseteq \m{F}'\) which suggests that the identity on \(X\) should lift to a morphism
 \((X, \m{T}', \m{F}') \to (X, \m{T}, \m{F})\).
The problem is that this need not be continuous.
One might suppose that one could fix this by altering the topology used in defining the Smith space from a Fr\"olicher space.
For a Fr\"olicher space \((X, \m{C}, \m{F})\) one would need to be able to choose a topology \(\m{T}\) on \(X\) such that \((X, \m{T}, \m{F})\) was a Smith space and if \((X, \m{T}', \m{F}')\) was another Smith space structure on \(X\) with
 \(\m{F}' \subseteq \m{F}\)
then the identity on \(X\) is continuous as a morphism
 \((X, \m{T}) \to (X, \m{T}')\).

\medskip

Let us show by example that this is not possible.
Our example will also show that \(\SmFr\) cannot be a right adjoint as it does not preserve limits.
Note that for any topological space \((X, \m{T})\) the triple \((X, \m{T}, \m{F})\) is a Smith space where \(\m{F}\) is the family of all continuous functionals from \(X\) to \R.

Let \(\m{T}_+\) be the topology on \R with basis
 \(\{[a, \infty), a \in \R\}\)
and \(\m{T}_-\) with basis
 \(\{(-\infty, a], a \in \R\}\).
Let \(\m{T}_{\R}\) be the standard topology on \R.
For either of these non\hyp{}standard topologies, a continuous map
 \(f \colon (\R, \m{T}_\pm) \to (\R, \m{T}_{\R})\)
is constant.
Let \(\m{F}_c\) denote the family of constant functionals on \R.
As remarked above, each of
 \((\R, \m{T}_\pm, \m{F}_c)\)
is a Smith space.
The identity on \R underlies morphisms of Smith spaces
 \((\R, \m{T}_\pm, \m{F}_c) \to (\R, \m{I}_{\R}, \m{F}_c)\)
where \(\m{I}_{\R}\) is the indiscrete topology on \R.
Let us consider the corresponding pullback.
It is clear that the underlying set of the pullback is \R and that the underlying set morphisms are the identity.
For these to be continuous, the topology on the pullback must be the discrete topology.
The family of functionals is then forced, by either the locality condition or the saturation condition, to be \emph{all} functionals.
That is to say, the pullback is
 \((\R, \m{D}_{\R}, \Set(\R,\R))\).

The corresponding Fr\"olicher spaces of
 \((\R, \m{T}_\pm, \m{F}_c)\)
and \((\R, \m{I}, \m{F}_c)\)
are all
 \((\R, \Set(\R,\R), \m{F}_c)\).
The pullback in \(\fcat\) is thus
 \((\R, \Set(\R,\R), \m{F}_c)\).
However, the Fr\"olicher space of \((\R, \m{D}_{\R}, \Set(\R,\R))\) is
 \((\R, \m{C}_c, \Set(\R,\R))\)
where \(\m{C}_c\) is the set of constant curves.
Hence \(\SmFr\) cannot be a right adjoint.

To see that this shows that we cannot choose the topology on a Fr\"olicher space in a manner suitable for Smith spaces, note that by this example the topology on \((X, \Set(\R,\R), \m{F}_c)\) would have to be finer than both \(\m{T}_+\) and \(\m{T}_-\).
The only such topology is the discrete topology, but by the completion property of Smith spaces the resulting functionals would be \emph{all} functionals.

\medskip

Let us now show that \(\FrSm\) cannot be a left adjoint by examining its behaviour on colimits.
Equip \R and \([0,1]\) with their standard structure as Smith spaces and as Fr\"olicher spaces (note that these correspond under \(\FrSm\)).
Define two morphisms \(a, b \colon \R \to [0,1]\) by \(a(t) = 1\) and \(b(t) = 1/(1 + t^2)\) (the exact form of \(b\) does not matter, just its image).
Consider the coequalisers in both categories.

The underlying set of the coequalisers is \(\{0,1\}\) in each case.
For each, a functional \(\{0,1\} \to \R\) is smooth if and only if it pulls back to a smooth functional on \([0,1]\).
As a smooth functional on \([0,1]\) is continuous, we see that the smooth functionals on \(\{0,1\}\) are the constant functionals.
The Fr\"olicher space structure is thus \((\{0,1\}, \Set(\R, \{0,1\}), \m{F}_c)\).
For the Smith space structure, we must also work out the topology.
As constant functions are always continuous, there are no constraints on the topology from those whence we see that the topology must be the quotient topology.
This is \(\{\emptyset, \{1\}, \{0,1\}\}\).
But the topology on \(\FrSm(\{0,1\}, \Set(\R, \{0,1\}), \m{F}_c)\) is the indiscrete topology no matter which method of choosing the topology is used.

\medskip

We can modify the above example to show that \(\SmFr\) also cannot be a left adjoint.
In the above, change the Smith structure on \([0,1]\) to be that with all continuous functions.
The corresponding Fr\"olicher space is \(([0,1], \m{C}_c, \Set([0,1], \R))\).
The rest of the structure stays the same as above.
Let us calculate the coequalisers in both categories.
As before, both have underlying sets \(\{0,1\}\).
The functionals on each are those functions \(\{0,1\} \to \R\) which pull back to ``smooth'' functionals on \([0,1]\).
The Smith structure thus produces all continuous functionals \(\{0,1\} \to \R\).
As the topology is the same as before, namely \(\{\emptyset, \{1\}, \{0,1\}\}\), this is simply the constant functionals.
The coequaliser in the category of Smith spaces is thus \((\{0,1\}, \m{T}, \m{F}_c)\) where \(\m{T}\) is the above topology.
The underlying Fr\"olicher space of this is \((\{0,1\}, \Set(\R, \{0,1\}), \m{F}_c)\).
On the other hand, the coequaliser in the category of Fr\"olicher spaces is clearly \((\{0,1\}, \m{C}_c, \Set(\{0,1\}, \R))\).
Hence \(\SmFr\) does not preserve colimits.

\medskip

Finally, for these functors, let us show that \(\FrSm\) cannot be a right adjoint by considering its action on limits.
For \(t \in \R\) define a Fr\"olicher space \((\R, \m{C}_t, \m{F}_t)\) by taking \R and putting a kink in at \(t\).
That is, the functionals are those maps \(\R \to \R\) which are continuous, are smooth on \(\R \ssetminus \{t\}\), and all left and right derivatives exist at \(t\) (but are not necessarily equal).
The corresponding curves are characterised by the property that they can only pass through \(t\) \emph{infinitely slowly}.
As an example, \((\R, \m{C}_0, \m{F}_0)\) can be viewed as the union of the positive \(x\)\enhyp{} and \(y\)\enhyp{}axes in \(\R^2\).
There is an obvious morphism from each of these spaces to \R with its usual Fr\"olicher structure.
Let us consider the limit of this family.
The underlying set is again \R and the curves in the Fr\"olicher space structure are those maps \(c \colon \R \to \R\) which are in each of the \(\m{C}_t\).
Such a curve must be smooth and everywhere infinitely slow, whence constant.
Hence the limiting Fr\"olicher space is \((\R, \m{C}_c, \Set(\R,\R))\).

Now let us transport this to the category of Smith spaces via \(\FrSm\).
The underlying set of the limit is again \R.
The topology is the standard topology, which immediately implies that \(\FrSm\) does not preserve limits, but let us examine the functionals as well to see that even if one could fix the topology then \(\FrSm\) would not preserve limits.
As we have seen before, the plots of the corresponding Smith space are constant whence the functionals are all \emph{continuous} functionals.
As the topology is the standard one, we see that the limiting Smith space is \((\R, \m{T}_\R, C(\R,\R))\).
This is not the result of applying \(\FrSm\) to \((\R, \m{C}_c, \Set(\R,\R))\).

\paragraph{Smith and Sikorski.}

From section~\ref{sec:funwild} we know that \(\SmSi \adjoint \SiSm\).
Let us show that \(\SmSi\) is not a right adjoint and \(\SiSm\) is not a left adjoint.

To show that \(\SiSm\) is not a left adjoint, let us consider its behaviour on colimits.
For \(k \in \N\), define a family of functionals \(\m{F}_k\) to be the set of functions \(f \colon \R \to \R\) which are continuous, are smooth on \(\R \ssetminus \{0\}\), all left and right derivatives exist at \(0\), and the left and right derivatives agree up to (at least) order \(k\).
It is straightforward to show that each of these families satisfies the axioms for a Sikorski space (with the standard topology on \R).
The colimit of this family obviously has functionals the set of smooth functions \(f \colon \R \to \R\).

Let us consider the underlying Smith space of
 \((\R, \m{T}_{\R}, \m{F}_k)\)
(\(\m{T}_{\R}\) being the standard topology on \R).
Let \(c \colon \R \to \R\) be a curve such that \(f c \in \Ci(\R,\R)\) for all \(f \in \m{F}_k\).
It is clear that \(c\) is smooth since the identity map is in \(\m{F}_k\).
Let us suppose, for a contradiction, that there is some \(t \in \R\) such that \(c(t) = 0\), that \(c\) passes through \(0\) at \(t\), and that \(c\) is not flat at \(t\).
Let us assume for simplicity that \(t = 0\), the general case follows by precomposition with a translation.
These conditions imply that the first non\hyp{}vanishing derivative of \(c\) at \(0\) is odd.
Thus there is some odd \(l\) and \(C \ne 0\) such that
 \(c(s) = C s^l + O(s^{l+1})\).
Let \(f \in \m{F}_k\) be the function \(f(x) = \sabs{x} x^k\) and consider \(f c\).
Expanding out, we see that
\begin{align*}
 (f c)(s) &%
 = \sabs{C s^l + O(s^{l+1})}(C s^l + O(s^{l+1}))^k \\
 &%
 = \sabs{C} \sabs{s}^l \sabs{1 + O(s)} C^k s^{l k}(1 + O(s)) \\
 &%
 = \sabs{C} C^k \sabs{s} s^{l (k+1) - 1}(1 + O(s)) &%
 &
 \text{as \(l\) is odd}.
\end{align*}
From this we deduce that the \(l(k+1)\)th derivative of \(f c\) does not exist at \(0\).
Hence if \(c \colon \R \to \R\) is such that \(f c \in \Ci(\R,\R)\) for all \(f \in \m{F}_k\) then \(c\) can only pass through \(0\) when flat.
This obvious generalises to more general plots.

The key point is that this condition is independent of \(k\).
Thus the underlying Smith space of
 \((\R, \m{T}_{\R}, \m{F}_k)\)
is independent of \(k\).
It is clearly also not the standard Smith structure on \R, in fact it is the Smith structure on \R with a kink at \(0\).
Hence the functor
 \(\SiSm \colon \kcat \to \scat\)
does not take colimits to colimits and so cannot be a left adjoint.

\medskip

Now let us consider the action of
 \(\SmSi \colon \scat \to \kcat\)
on limits.
For \(t \in \R\) define a Smith space
 \((\R, \m{T}_\R, \m{F}_t)\)
by putting a kink in \R at \(t\).
There is an obvious morphism from each of these spaces to \R with its standard Smith space structure.
We consider the limit of this.
The underlying set is again \R and the underlying topology is the standard one.
As we have said before,  the plots into the Smith space structure are constant whence the functionals are all continuous functionals.
Thus the limiting Smith space is \((\R, \m{T}_\R, C(\R,\R))\).

Now let us transport this to \(\kcat\) via \(\SmSi\).
The underlying set of the limit is again \(\R\).
The functionals in the Sikorski space structure on the limit must be the smallest family which contains all the \(\m{F}_t\).
This is the family of piecewise\hyp{}smooth maps, where we interpret ``piecewise\hyp{}smooth'' to mean smooth except at a finite number of points and at those points all left and right derivatives exist.
This is not
 \(\SmSi(\R, \m{T}_\R, C(\R,\R))\)
and hence \(\SmSi\) does not preserve limits.

\paragraph{Fr\"olicher and Souriau.}
Let us consider
 \(\SoFr \colon \dcat \to \fcat\).
This is a left adjoint and so preserves colimits.
Let us consider its action on limits.

Let
 \(\gamma \colon \R \to \R^2\)
be a smooth curve.
Define a diffeology on \(\R^2\), \(\m{D}_{\gamma}\), by starting with the family of smooth curves
 \(\beta \colon U \to \R^2\)
for which
 \(\beta^{-1}(\overline{\gamma(\R)})\)
has void interior, then take the minimal diffeology containing such curves.
The condition ensures that the resulting diffeology does not contain \(\gamma\).

The family
 \(\{\m{D}_{\gamma} : \gamma \in \Ci(\R, \R^2)\}\)
defines a family of Souriau spaces.
This has a limit which is formed by intersecting the diffeologies.
As
 \(\gamma \notin \m{D}_{\gamma}\),
we obtain the discrete diffeology containing only the constant maps.
The underlying Fr\"olicher structure of this is the corresponding discrete Fr\"olicher structure.

Let us consider the Fr\"olicher structure of \((\R^2, \m{D}_\gamma)\).
Clearly, a smooth map \(f \colon \R^2 \to \R\)
defines a morphism of Souriau spaces
 \((\R^2, \m{D}_\gamma) \to (\R, \m{D}_{\R})\).
The converse also holds as can be proved by examining the crucial step in the proof of Boman's result in \cite[3.4]{akpm}.
This step is the ``(4) \(\implies\) (3)''.
We need to show that a morphism
 \(f \colon (\R^2, \m{D}_\gamma) \to (\R, \m{D}_{\R})\)
is a \cimap.
Clearly, it is smooth away from the image of \(\gamma\).
As the closure of \(\im(\gamma)\) has void interior (since \(\gamma\) is smooth), we can choose the smooth curve, \(c\), from \cite[3.4]{akpm} with the crucial property that
 \(c^{-1}(\overline{\gamma(\R)})\)
has void interior.
Thus \(f\) is a \cimap.
Hence the underlying Fr\"olicher structure of \((\R^2, \m{D}_\gamma)\)
is the standard Fr\"olicher structure on \(\R^2\).
Thus the limit of the family
 \(\{\SoFr(\R^2, \m{D}_\gamma)\}\)
is again the standard Fr\"olicher structure on \(\R^2\).

Hence \(\SoFr\) does not preserve limits and so cannot be a right adjoint.

\medskip

The functor
 \(\FrSo \colon \fcat \to \dcat\)
is a right adjoint and so preserves limits.
Let us consider its action on colimits.
With all the spaces having their natural Fr\"olicher structures, consider the coequaliser of the maps
 \(x, o \colon \R \to \R^2\)
given by \(x(t) = (t,0)\) and \(o(t) = (0,0)\).
The underlying set of this coequaliser is \(\R^2\) ``pinched'' along the \(x\)\hyp{}axis, let us call this \(X\) and write \(q \colon \R^2 \to X\) for the projection.
The Fr\"olicher structure is given by taking those functionals \(f \colon X \to \R\) which pull\hyp{}back to \cimaps on \(\R^2\).
Let \((X, \m{C}, \m{F})\) be this Fr\"olicher structure.

Applying \(\FrSo\) to this diagram of Fr\"olicher spaces results in the standard diffeologies on \(\R\) and \(\R^2\).
The underlying set of the coequaliser is again \(\R^2\) ``pinched'' along the \(x\)\hyp{}axis, i.e.~\(X\), and the diffeology consists of those plots which factor through the projection \(\R^2 \to X\).
Let us write this as \((X, \m{D})\).
Let us compare \(\m{D}\) with the diffeology coming from applying \(\FrSo\) to \((X, \m{C}, \m{F})\).
By construction, \(\m{C}\) is in the diffeology of \(\FrSo(X, \m{C}, \m{F})\).

Let
 \(c_\flat \colon \R \to \R\)
be a strictly increasing curve which is smooth, maps \(0\) to \(0\), and is flat at \(0\).
Let
 \(c_\natural \colon \R \to \R^2\)
be the curve
\[
 c_\natural(t) = \begin{cases} (-1, c_\flat(t)) &
  t < 0, \\
  (0,0) & t = 0, \\
  (1, c_\flat(t)) & t > 0.
 \end{cases}
\]
Let
 \(c_\sharp = q c_\natural \colon \R \to X\).
Clearly, any two lifts of \(c_\sharp\) through \(q\) can only differ in their value at \(0\) since \(q\) is a bijection away from the \(x\)\hyp{}axis.
Thus there cannot be a continuous lift of \(c_\sharp\) and so it cannot be in \(\m{D}\).

Let us show that \(c_\sharp\) is in \(\m{C}\).
Let \(f \colon X \to \R\) be in \(\m{F}\).
Then
 \(f c_\sharp = f q c_\natural\)
and
 \(f q \colon \R^2 \to \R\)
is a \cimap.
Clearly, \(f q c_\natural\) is smooth away from \(0\).
Moreover, we see that all left and right limits of \(f q c_\natural\) and its derivatives exist at \(0\).
For \(f q c_\natural\) itself, these limits are the same as they are \(f q(-1, 0)\) and \(f q(1, 0)\).
For the derivatives, \(f q\) is a \cimap whence, as \(c_\flat\) is flat at \(0\), by the chain rule all the left and right limits of the derivatives at \(0\) are \(0\).
Hence \(f q c_\natural\) is a \cimap and so \(c_\sharp\) is in \(\m{C}\), whence in the diffeology of \(\FrSo(X, \m{C}, \m{F})\).

Hence \(\FrSo\) does not take colimits to colimits and so cannot be a left adjoint.

\paragraph{Souriau and Chen.}
The functor \(\ChSo \colon \ccat \to \dcat\) has both a left and a right adjoint.
Let us show that neither of its adjoints has further adjoints.

Consider first \(\SoCht \colon \dcat \to \ccat\).
This is a right adjoint and so preserves limits.
Let us consider its action on colimits.

Let \((X, \m{P})\) be a Chen space.
From the axioms of a Chen structure, every plot \(\phi \colon C \to X\) in \(\m{P}\) underlies a morphism of Chen spaces \(\phi \colon (C, \m{P}_C) \to (X, \m{P})\).
Since the identity lies in \(\m{P}_C\) we see that
\[
  (X, \m{P}) = \varinjlim_{\m{P}} (C, \m{P}_C).
\]
Moreover, \((C, \m{P}_C) = \SoCht(C, \m{D}_C)\) where \(\m{D}_C\) is the standard diffeology on \(C\).
Hence every Chen space is the colimit of things in the image of \(\SoCht\).
As \(\SoCht\) is not dense and \(\dcat\) is cocomplete, it cannot preserve colimits.

Now let us consider \(\SoChb \colon \dcat \to \ccat\).
This is a left adjoint and so preserves colimits.
Let us consider its action on limits.

First we observe that if \(U\) is an open subset of some Euclidean space equipped with its standard diffeology then \(\SoChb(U, \m{D}_U) = \SoCht(U, \m{D}_U)\).
Secondly we observe that the standard Chen structure and standard diffeology on \([0,1]\) are both limits in their respective categories of the family \(\{(-\epsilon, 1 + \epsilon)\}\), again with their standard structures.
Since \(\SoChb([0,1])\) is not the standard Chen structure on \([0,1]\) we see that \(\SoChb\) does not preserve limits.

\section{More on Equivalences}
\label{sec:equiv}

In the previous sections we have constructed a plethora of functors between the various categories of smooth spaces and shown that none of them are equivalences.
However that does not exclude the possibility that there are other functors between these categories that are equivalences.
In this section we consider this.
Our conclusion is that these categories are not equivalent.
Our strategy is to find things in each that may be termed \emph{categorical invariants} which must, therefore, be preserved by any equivalence.

The first such invariant is the (rather, a) terminal object.
This is obviously preserved (up to equivalence) by an equivalence of categories.
The reason that this is an important invariant is that all of our categories are equipped with faithful functors to the category of sets, establishing them as ``concrete categories''.
As we shall show, that the terminal object is a categorical invariant means that any \emph{arbitrary} equivalence defines an isomorphism of concrete categories.

The relevant properties of the categories are as follows.
All but one of these are standard properties of concrete categories.
\begin{description}
\item[Construct:] That is, concrete over \(\Set\).

\item[Cocomplete.]

\item[Amnestic:] This means that if \(A\) and \(B\) are two objects in one of our categories with the same underlying set such that the identity on the underlying set lifts to morphisms \(A \to B\) and \(B \to A\) (necessarily isomorphisms), then \(A = B\).

\item[Transportable:] This means that if \(A\) is an object in one of our categories and \(X\) is a set isomorphic to the underlying set of \(A\) then there is an object, say \(B\), in the category with underlying set \(X\) such that the isomorphism between \(X\) and the underlying set of \(A\) lifts to an isomorphism between \(B\) and \(A\).

As the categories are amnestic, the object \(B\) is unique.

\item[Terminally concrete:] By this we mean that the underlying\hyp{}set functor is equivalent to the evaluation of the hom\hyp{}functor on a terminal object.
\end{description}

\begin{proposition}
Let \(\m{A}\) and \(\m{B}\) be cocomplete, amnestic, transportable constructs that are terminally concrete.
If \(\m{A}\) and \(\m{B}\) are equivalent then they are isomorphic as constructs.
\end{proposition}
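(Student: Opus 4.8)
The plan is to exploit the single categorical invariant that controls everything here: in a terminally concrete construct the underlying-set functor is \emph{represented by the terminal object}, and a terminal object is a purely categorical notion. Write $U_{\m{A}}$ and $U_{\m{B}}$ for the faithful functors to $\Set$ and fix terminal objects $T_{\m{A}}$ and $T_{\m{B}}$; terminal concreteness gives natural isomorphisms $U_{\m{A}} \cong \Hom{\m{A}}{T_{\m{A}}}{-}$ and $U_{\m{B}} \cong \Hom{\m{B}}{T_{\m{B}}}{-}$. Suppose $F \colon \m{A} \to \m{B}$ is an equivalence. Since equivalences preserve limits, $F(T_{\m{A}})$ is again terminal, so there is a (unique) isomorphism $F(T_{\m{A}}) \cong T_{\m{B}}$. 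Combining this with the full faithfulness of $F$ produces, for each object $A$, a chain of bijections
\[ U_{\m{A}}(A) \cong \Hom{\m{A}}{T_{\m{A}}}{A} \cong \Hom{\m{B}}{F(T_{\m{A}})}{F(A)} \cong \Hom{\m{B}}{T_{\m{B}}}{F(A)} \cong U_{\m{B}}(F(A)), \]
natural in $A$. Thus there is a natural isomorphism $\eta \colon U_{\m{A}} \Rightarrow U_{\m{B}} F$; this is the conceptual crux, and everything that follows is rigidification.

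Next I would promote $F$ to a functor that commutes with the underlying-set functors \emph{on the nose}. Each component $\eta_A \colon U_{\m{A}}(A) \to U_{\m{B}}(F(A))$ is a bijection, so by transportability of $\m{B}$ there is a unique object $F'(A)$ with $U_{\m{B}}(F'(A)) = U_{\m{A}}(A)$ together with an isomorphism $\iota_A \colon F'(A) \to F(A)$ satisfying $U_{\m{B}}(\iota_A) = \eta_A$. Transporting morphisms by $F'(f) = \iota_{A'}^{-1} F(f) \iota_A$ for $f \colon A \to A'$ makes $F'$ a functor isomorphic to $F$ via $\iota$, hence itself an equivalence. On objects $U_{\m{B}} F' = U_{\m{A}}$ holds by construction, and on morphisms the naturality square for $\eta$ gives $U_{\m{B}}(F'(f)) = \eta_{A'}^{-1}\, U_{\m{B}}(F(f))\, \eta_A = U_{\m{A}}(f)$. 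Hence $U_{\m{B}} F' = U_{\m{A}}$ as functors.

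It remains to see that the equivalence $F'$ is in fact an isomorphism of categories, and here amnesticity does the work. As $F'$ is fully faithful it suffices to show it is bijective on objects. If $F'(A_1) = F'(A_2)$ then $A_1$ and $A_2$ share the same underlying set, since $U_{\m{A}}(A_i) = U_{\m{B}}(F'(A_i))$, and the identity morphism of $F'(A_1)$ pulls back under full faithfulness to morphisms $A_1 \to A_2$ and $A_2 \to A_1$ each lying over the identity of the underlying set; amnesticity of $\m{A}$ then forces $A_1 = A_2$. For surjectivity, given $B \in \m{B}$ choose by essential surjectivity an isomorphism $\beta \colon F'(A) \to B$; transportability of $\m{A}$ produces $A'$ with $U_{\m{A}}(A') = U_{\m{B}}(B)$ and an isomorphism $A \to A'$ lying over $U_{\m{B}}(\beta)$, and a short computation shows the induced isomorphism $F'(A') \to B$ lies over the identity of $U_{\m{B}}(B) = U_{\m{B}}(F'(A'))$, so amnesticity of $\m{B}$ gives $F'(A') = B$. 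Thus $F'$ is a bijective-on-objects, fully faithful functor with $U_{\m{B}} F' = U_{\m{A}}$, that is, an isomorphism of constructs.

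The genuine content is the first paragraph: the only reason an \emph{arbitrary} equivalence must respect the concrete structure is that the underlying-set functor is represented by the categorically-defined terminal object. The steps that follow are bookkeeping, but the delicate one is the rigidification, where transportability and amnesticity must be used in tandem to upgrade a mere natural isomorphism of underlying-set functors first to a strict equality and then to an honest isomorphism of categories. I would also remark that cocompleteness plays no role in this particular argument, only terminal concreteness, transportability, and amnesticity being needed; presumably it is listed among the hypotheses because it is required for the finer invariants (the two-element set and the real line) treated in the sequel.
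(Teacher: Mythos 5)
Your proof is correct, and up through the rigidification it coincides with the paper's: both arguments extract the natural isomorphism \(\sabs{A} \cong \m{A}(*, A) \cong \m{B}(\func{F}(*), \func{F}(A)) \cong \sabs{\func{F}(A)}\) from terminal concreteness plus preservation of terminal objects, and both then use transportability (with amnesticity supplying uniqueness) to replace \(\func{F}\) by a naturally isomorphic functor commuting with the underlying-set functors on the nose. The divergence is in the endgame. The paper strictifies \emph{both} halves of the equivalence, \(\hat{\func{F}}\) and \(\hat{\func{G}}\), and must then show that the natural isomorphism \(1 \cong \hat{\func{G}}\hat{\func{F}}\) is itself concrete; this is precisely where cocompleteness enters: it furnishes the left adjoint \(\func{D}(X) = \scoprod_X *\) of the underlying-set functor, the canonical map \(\func{D}(\sabs{A}) \to A\) covers the identity, and chasing the resulting naturality square down to \(\Set\) forces each component of \(1 \cong \hat{\func{G}}\hat{\func{F}}\) to cover an identity, after which amnesticity upgrades the concrete equivalence to an isomorphism. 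You never construct a strictified quasi-inverse at all: you prove directly that your \(F'\) is bijective on objects --- injectivity by pulling the identity back through full faithfulness to isomorphisms lying over the identity and applying amnesticity of \(\m{A}\); surjectivity by transporting in \(\m{A}\) along the underlying bijection of \(\beta\) and applying amnesticity of \(\m{B}\) --- so that \(F'\) is an isomorphism of constructs outright, its inverse being automatically concrete. What each approach buys: yours is more economical and, as you correctly observe, genuinely eliminates the cocompleteness hypothesis, so you establish a slightly stronger statement than the paper's own proof does; the paper's route has the merit of exhibiting the concrete quasi-inverse \(\hat{\func{G}}\) explicitly and of introducing the discrete functor \(\func{D}\), a construction of independent use in this setting. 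Note only that your surjectivity step quietly invokes transportability of \(\m{A}\) as well as of \(\m{B}\), but since both are hypotheses nothing is amiss.
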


\begin{proof}
Let us write \(\sabs{\cdot}\) for the underlying set functors of both \(\m{A}\) and \(\m{B}\).
Let \(\func{F} \colon \m{A} \to \m{B}\) and \(\func{G} \colon \m{B} \to \m{A}\) be two functors giving an equivalence of categories.
As these are equivalences, they preserve categorical constructions up to natural isomorphism.
Since anything isomorphic to a terminal object is again a terminal object, if \(*\) is a terminal object for, say, \(\m{A}\) then \(\func{F}(*)\) is a terminal object for \(\m{B}\).

We therefore have natural isomorphisms
\[
  \sabs{A} \cong \m{A}(*, A) \cong \m{B}(\func{F}(*), \func{F}(A)) \cong \sabs{\func{F}(A)}
\]
and similarly a natural isomorphism \(\sabs{B} \cong \sabs{\func{G}(B)}\).

As \(\m{B}\) is transportable, for each object \(A\) of \(\m{A}\) there is an object \(\hat{\func{F}}(A)\) in \(\m{B}\) with underlying set \(\sabs{A}\) such that the isomorphism \(\sabs{A} \cong \sabs{\func{F}}(A)\) lifts to an isomorphism \(\hat{\func{F}}(A) \cong \func{F}(A)\).
We can then extend \(\hat{\func{F}}\) to a functor by using the isomorphism \(\hat{\func{F}}(A) \cong \func{F}(A)\) to transfer \(\func{F}(f) \colon \func{F}(A_1) \to \func{F}(A_2)\) to \(\hat{\func{F}}(A_1) \to \hat{\func{F}}(A_2)\).
The isomorphisms \(\hat{\func{F}}(A) \cong \func{F}(A)\) then fit together to define a natural isomorphism \(\hat{\func{F}} \cong \func{F}\).
The resulting functor \(\hat{\func{F}}\colon \m{A} \to \m{B}\) is then a concrete functor.

A similar construction results in \(\hat{\func{G}} \colon \m{B} \to \m{A}\).
Since \(\hat{\func{F}}\) is equivalent to \(\func{F}\) and \(\hat{\func{G}}\) to \(\func{G}\), \(\hat{\func{F}}\) and \(\hat{\func{G}}\) still define equivalences of categories.

Now let us consider the natural isomorphism \(1 \cong \hat{\func{G}}\hat{\func{F}}\).
As \(\m{A}\) is cocomplete and terminally concrete, the underlying\hyp{}set functor has a left adjoint given by \(X \mapsto \scoprod_X *\).
To see this, observe that there are natural isomorphisms
\[
  \Set(X, \sabs{A}) \cong \Set(*, \sabs{A})^X \cong \sabs{A}^X \cong \m{A}(*, A)^X \cong \m{A}(\scoprod_X *, A).
\]
Using the transportability, we can adjust this functor to a functor \(\func{D} \colon \Set \to \m{A}\) such that \(\sabs{\func{D}(X)} = X\).
The natural transformation \(\func{D}(\sabs{A}) \to A\) therefore covers the identity on \(\sabs{A}\).

As the functor \(\func{D}\) was defined using standard categorical constructions, \(\hat{\func{G}}\hat{\func{F}}\) preserves \(\func{D}\) up to natural isomorphism.
However, as everything is now set\hyp{}preserving, we see that \(\sabs{\hat{\func{G}} \hat{\func{F}} \func{D}(X)} = X\) from which we deduce that \(\hat{\func{G}}\hat{\func{F}}\func{D} = \func{D}\).

Applying the natural isomorphism \(1 \cong \hat{\func{G}}\hat{\func{F}}\) to the natural transformation \(\func{D}(\sabs{A}) \to A\) we obtain the following commutative diagram.
\begin{centre}
\begin{tikzpicture}[node distance=1.8cm, auto,>=latex', thick]
\node (da) {\(\func{D}(\sabs{A})\)};
\node[right of=da, node distance=2cm] (a) {\(A\)};
\node[below of=da] (gfda) {\(\hat{\func{G}}\hat{\func{F}}\func{D}(\sabs{A})\)};
\node[below of=a] (gfa) {\(\hat{\func{G}}\hat{\func{F}}(A)\)};
\path[->] (da) edge (a)
(a) edge (gfa)
(da) edge (gfda)
(gfda) edge (gfa);
\end{tikzpicture}
\end{centre}

Mapping down to \(\Set\) we obtain the diagram

\begin{centre}
\begin{tikzpicture}[node distance=1.8cm, auto,>=latex', thick]
\node (a) {\(\sabs{A}\)};
\node[right of=a, node distance=2cm] (b) {\(\sabs{A}\)};
\node[below of=a] (c) {\(\sabs{A}\)};
\node[below of=b] (d) {\(\sabs{A}\)};
\path[->] (a) edge (b)
(b) edge (d)
(a) edge (c)
(c) edge (d);
\end{tikzpicture}
\end{centre}

in which the horizontal maps are the identity as is the left\hyp{}hand vertical map.
The right\hyp{}hand vertical map is therefore the identity.
From this we conclude that the natural isomorphism \(1 \cong \hat{\func{G}}\hat{\func{F}}\) is also concrete.

A similar story holds for the natural isomorphism \(1 \cong \hat{\func{F}}\hat{\func{G}}\).
Hence the equivalence of categories defined by \(\hat{\func{F}}\) and \(\hat{\func{G}}\) is an equivalence of constructs.

Finally, we observe that an equivalence of amnestic constructs must be an isomorphism.
\end{proof}

A simple adaptation of the above also shows that if we are interested in adjunctions, then many of these have similar properties.

\begin{proposition}
Let \(\m{A}\) and \(\m{B}\) be cocomplete, amnestic, transportable constructs that are terminally concrete.
Suppose that \(\func{F} \colon \m{A} \to \m{B}\) is left adjoint to \(\func{G} \colon \m{B} \to \m{A}\) and \(\func{F}\) preserves terminal objects.
Then \(\func{G}\) is equivalent to a set\hyp{}preserving functor. \noproof
\end{proposition}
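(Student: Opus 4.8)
The plan is to imitate the proof of the preceding proposition, but to replace the two-sided equivalence bookkeeping with a single natural isomorphism of underlying-set functors read off from the adjunction. The goal is to produce a natural isomorphism $\sabs{\func{G}(-)} \cong \sabs{-}$ and then invoke transportability (with amnesticity for uniqueness) to rigidify $\func{G}$ into a functor that preserves underlying sets on the nose.

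First I would fix a terminal object $*$ of $\m{A}$. Since $\func{F}$ preserves terminal objects by hypothesis, $\func{F}(*)$ is a terminal object of $\m{B}$. Then for any object $B$ of $\m{B}$ I would assemble the chain of isomorphisms
\[
  \sabs{\func{G}(B)} \cong \m{A}(*, \func{G}(B)) \cong \m{B}(\func{F}(*), B) \cong \sabs{B},
\]
where the outer two are the terminal-concreteness isomorphisms for $\m{A}$ and $\m{B}$ respectively, and the middle one is the adjunction isomorphism coming from $\func{F} \dashv \func{G}$. Each is natural in $B$ — the middle being natural in both variables and here applied with its first slot frozen at $*$ — so the composite is a natural isomorphism $\sabs{\func{G}(-)} \cong \sabs{-}$.

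With this natural isomorphism in hand I would run precisely the transportability argument from the previous proof: for each $B$ transport $\func{G}(B)$ along $\sabs{B} \cong \sabs{\func{G}(B)}$ to an object $\hat{\func{G}}(B)$ of $\m{A}$ with $\sabs{\hat{\func{G}}(B)} = \sabs{B}$, extend $\hat{\func{G}}$ to morphisms by transferring $\func{G}(f)$ across the comparison isomorphisms, and observe that these comparison isomorphisms then assemble into a natural isomorphism $\hat{\func{G}} \cong \func{G}$. By construction $\hat{\func{G}}$ is set-preserving, so $\func{G}$ is equivalent to the set-preserving functor $\hat{\func{G}}$, as required. The functoriality of $\hat{\func{G}}$ and the naturality of $\hat{\func{G}} \cong \func{G}$ are the routine verifications I would not belabour, being identical to the corresponding checks in the proof of the previous proposition.

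The only genuine point of care — and the step I expect to be the crux — is the naturality of the middle isomorphism in the displayed chain once its first slot is fixed at the terminal object, i.e.\ confirming that $B \mapsto \m{B}(\func{F}(*), B)$ inherits the naturality of the adjunction bijection in the surviving variable. Once that is pinned down the conclusion follows formally. It is worth noting that, unlike in the previous proposition, cocompleteness plays no role here: the free-functor construction $\func{D}$ is not needed, and only terminal concreteness of both constructs, the adjunction, the hypothesis that $\func{F}$ preserves terminal objects, and transportability together with amnesticity are actually used.
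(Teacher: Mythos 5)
Your proof is correct and is precisely the ``simple adaptation'' the paper intends: the proposition is stated with no explicit proof, and the expected argument is exactly your chain \(\sabs{\func{G}(B)} \cong \m{A}(*, \func{G}(B)) \cong \m{B}(\func{F}(*), B) \cong \sabs{B}\) (terminal concreteness, the adjunction with first slot frozen at \(*\), terminal concreteness again via \(\func{F}(*)\) terminal), followed by the same transportability--amnesticity rigidification used in the preceding proposition's proof. Your closing observation is also accurate: cocompleteness and the functor \(\func{D}\) were needed in the previous proposition only to show the comparison isomorphisms were themselves concrete, which is not asserted here, so that hypothesis goes unused in this statement.
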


The above says that if we wish to look for equivalences between our categories then it is sufficient to look for isomorphisms of constructs.
Another way of saying that is to say that if two of our categories have underlying category \xcat then they are equivalent via an arbitrary equivalence if and only if they are equivalent via one which preserves the underlying category.

To show that this is also true when the underlying category is \ycat we need to analyse the ``topology classifier'': \(\{0, 1\}\) with its order topology.

\begin{proposition}
Let \(\acat\) be a category of smooth spaces built from our general recipe using \ycat as the underlying category.
Any set\enhyp{}preserving isomorphism of \acat preserves the underlying topological spaces.
\end{proposition}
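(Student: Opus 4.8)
The plan is to detect the topology categorically by means of the Sierpiński space, the order topology on \(\{0,1\}\). Write \(\Sigma\) for the space \(\{0,1\}\) with open sets \(\{\emptyset,\{1\},\{0,1\}\}\) and \(\Sigma'\) for its flip (open point \(0\)); recall that in \(\ycat\) a map \(g\colon \forfunc[A]\to\Sigma\) is continuous exactly when \(g^{-1}(1)\) is open, so that \(\ycat(\forfunc[A],\Sigma)\) is naturally the lattice of open sets of \(A\). The isomorphism \(\func{F}\colon\acat\to\acat\) is set\hyp{}preserving by hypothesis: \(\sabs{\func{F}(A)}=\sabs{A}\) and \(\func{F}\) acts as the identity on underlying functions. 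The goal is to promote this to preservation of the underlying topology, and for this I would realise \(\Sigma\) inside \(\acat\), pin down the realisation categorically, and then transport open sets along \(\func{F}\).

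First I would identify the smooth object \(S\) lying over \(\Sigma\). The crucial elementary fact is that every continuous function \(\Sigma\to\R\) is constant, since any value separating the two points would force the non\hyp{}open singleton \(\{0\}\) to be open. Consequently the only output test functions on \(\Sigma\) are the constants, the input forcing is vacuous, and the fibre of the forgetful functor \(\acat\to\ycat\) over \(\Sigma\) reduces to a single object \(S\) whose input test functor consists of \emph{all} continuous plots. This makes \(S\) simultaneously the discrete and indiscrete lift of \(\Sigma\), whence \(\acat(A,S)\cong\ycat(\forfunc[A],\Sigma)\): a set map \(g\colon\sabs{A}\to\{0,1\}\) underlies a morphism \(A\to S\) iff it is continuous, the smoothness conditions being automatic (post\hyp{}composing a plot of \(A\) with \(g\) lands among all continuous plots, and pre\hyp{}composing a constant functional stays constant). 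Thus \(\acat(A,S)\), read on underlying functions, is exactly the set of open sets of \(A\); the same computation over \(\Sigma'\) makes \(\acat(A,S')\) the complements of open sets.

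Next I would characterise \(\{S,S'\}\) categorically, so that \(\func{F}\) must permute it. Since \(\func{F}\) preserves underlying sets it permutes the finitely many two\hyp{}point objects, and on \(\{0,1\}\) there are exactly four topologies (discrete, indiscrete, \(\Sigma\), \(\Sigma'\)), each carrying, by the argument above, a unique smooth structure. Passing to endomorphism monoids, which \(\func{F}\) preserves, the discrete and indiscrete objects admit all four self\hyp{}maps, whereas \(\acat(S,S)=\ycat(\Sigma,\Sigma)=\{\mathrm{id},c_0,c_1\}\) has exactly three elements (the identity and the two constants, the transposition failing to be continuous), and likewise for \(S'\); in particular \(S\) and \(S'\) are the only two\hyp{}point objects with trivial automorphism group. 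Hence \(\{S,S'\}\) is \(\func{F}\)\hyp{}invariant.

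Finally I would transport and rule out the swap. For each \(A\), \(\func{F}\) restricts to a bijection \(\acat(A,S)\to\acat(\func{F}A,\func{F}S)\) that is the identity on underlying functions, so the open sets of \(A\) coincide with \(\acat(\func{F}A,\func{F}S)\) read on underlying functions. If \(\func{F}S=S\) this is precisely the set of open sets of \(\func{F}A\), and as this holds for all \(A\) the two topologies agree. The remaining possibility \(\func{F}S=S'\) would force the open sets of every \(A\) to equal the \emph{complements} of the open sets of \(\func{F}A\); applying this to \(A=\R\) is a contradiction, since the standard topology of \(\R\) is not the lattice of complements of any topology (it is not closed under arbitrary intersections). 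Hence \(\func{F}S=S\) and \(\func{F}\) preserves open sets, i.e.\ the underlying topological spaces. The main obstacle is the categorical step of the third paragraph: one must genuinely control \emph{all} two\hyp{}point objects rather than merely the indiscrete lifts, so as to be certain that the three\hyp{}element endomorphism monoid isolates exactly \(\{S,S'\}\), and one must dispose of the \(S\) versus \(S'\) ambiguity, since these objects are isomorphic and so cannot be separated by any purely categorical invariant.
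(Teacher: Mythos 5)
Your overall strategy is the same as the paper's: encode the topology of an object in the hom\hyp{}set to a smooth structure on the Sierpi\'nski space, show that this two\hyp{}point object is (essentially) preserved by any set\enhyp{}preserving isomorphism, and kill the residual flip ambiguity by exhibiting a space whose open\hyp{}set lattice is not the complement lattice of any topology. But there is a genuine gap exactly where you suspected it. The proposition is stated for an \emph{arbitrary} category built from the general recipe over \(\ycat\), so the forcing conditions may be as weak as the terminal or even the empty ones. Your claim that the fibre of the forgetful functor over \(\Sigma\) ``reduces to a single object'' is then false: the inference from ``all continuous functionals on \(\Sigma\) are constant'' to ``all continuous plots are present'' requires input \emph{saturation}; under, say, the terminal input condition the family of constant plots alone is a valid structure over \(\Sigma\), distinct from the indiscrete one, and under empty forcing conditions there are many more. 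This wrecks your third paragraph: there can be two\hyp{}point objects other than your four, and in particular other objects over the order topologies with a three\hyp{}element endomorphism monoid (the ``constants only'' structure over \(\Sigma\) is one), so counting endomorphisms does not isolate \(\{S, S'\}\). The identification \(\Hom{\acat}{A}{S} \cong \ycat(\forfunc[A], \Sigma)\) hides the same assumption: it needs every constant functional to lie in the output family of every object \(A\), which again is not automatic under weak forcing.

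The paper closes this hole differently: it works only with \emph{indiscrete} (maximal) structures, which exist in every fibre because the fibres are complete lattices (Proposition~\ref{prop:lattice}), and it observes that indiscrete structures are expressible as colimits, so any set\enhyp{}preserving automorphism must take indiscrete structures to indiscrete structures. That yields invariance of the four\hyp{}element lattice of indiscrete two\hyp{}point objects with no need to control the whole fibre or to count endomorphisms. Your final step, by contrast, is sound and is a genuine alternative to the paper's: the paper rules out the order flip using the cofinite topology on an infinite set, whereas your observation that the standard topology on \(\R\) is not closed under arbitrary intersections (so cannot be the lattice of closed sets of any topology) works just as well, since an indiscrete object over standard \(\R\) always exists. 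If you replace your uniqueness\hyp{}and\hyp{}counting step with the colimit characterisation of indiscrete structures, the rest of your argument goes through.
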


\begin{proof}
To see this we observe that the topology on a set is determined by the set of continuous functions to \(\{0,1\}\) equipped with the order topology \(\{\emptyset, \{0\}, \{0,1\}\}\), which we shall denote by \(\m{T}_0\).

We define the structure of an \aobj on \((\{0,1\}, \m{T}_0)\) by taking the indiscrete structure.
This has all continuous maps from \tobjs to \((\{0,1\}, \m{T}_0)\) as input test functions and then the appropriate output test functions to satisfy the forcing condition for \acat.
This has the property that any continuous map \(\forfunc[\aobj] \to (\{0,1\}, \m{T}_0)\) lifts to an \amor.
Thus the topology on the underlying \yobj of an \aobj[\aobj] is determined by the set of \amors from \aobj to the indiscrete \(\acat\)\enhyp{}structure on \((\{0,1\}, \m{T}_0)\).

A set\enhyp{}preserving automorphism of \acat takes the indiscrete \(\acat\)\enhyp{}structure on \((\{0,1\}, \m{T}_0)\) to another \aobj with underlying set \(\{0,1\}\).
Now \(\{0,1\}\) has four topologies: discrete, indiscrete, and the two order topologies.
Each of these has a corresponding indiscrete \(\acat\)\enhyp{}structure.
As these indiscrete structures are expressible as colimits, a set\enhyp{}preserving automorphism of \acat must take indiscrete structures to indiscrete structures.
Thus it defines an order\enhyp{}isomorphism of the lattice consisting of the indiscrete \(\acat\)\enhyp{}structures on the four topologies on \(\{0,1\}\).
In particular, \((\{0,1\}, \m{T}_0)\) with the indiscrete \(\acat\)\enhyp{}structure either maps to itself or to the indiscrete \(\acat\)\enhyp{}structure on the opposite order topology.

Suppose that there is a set\enhyp{}preserving automorphism of \(\acat\) which sends the indiscrete structure on \((\{0,1\}, \m{T}_0)\) to that on the other order topology.

Let \(X\) be an infinite set and let \(\m{T}\) be the topology whose closed sets are the finite subsets.
Equip \((X, \m{T})\) with its indiscrete \(\acat\)\enhyp{}structure.
The \amors between indiscrete \(\acat\)\enhyp{}structures are simply the continuous maps on the underlying topological spaces and thus the \amors from the indiscrete \(\acat\)\enhyp{}structure on \((X,\m{T})\) to that on \((\{0,1\}, \m{T}_0)\) are the characteristic functions of the closed sets.

Now under the supposed set\enhyp{}preserving automorphism of \(\acat\), the indiscrete structure on \((X, \m{T})\) is sent to some \aobj[\aobj] with underlying set \(X\) and the property that the \amors from \(\aobj\) to the indiscrete \(\acat\)\enhyp{}structure on \((\{0,1\}, \m{T}_1)\) are precisely the characteristic functions of the closed sets in \(\m{T}\).

Now \(\aobj\) has underlying topological space \((X, \m{T}')\) and \(\amors\) are continuous.
This means that a closed set in \(\m{T}\) is open in \(\m{T}'\).
Hence singleton sets are open and so \(\m{T}'\) is the discrete topology.
Then \emph{all} maps \(X \to \{0,1\}\) lift to \amors from \(\aobj\) to the indiscrete \(\acat\)\enhyp{}structure on \((\{0,1\}, \m{T}_1)\).
As \(X\) is infinite, this contradicts the statement that the \amors are the characteristic functions of the closed sets in \(\m{T}\).

Hence any set\enhyp{}preserving automorphism of \acat also preserves the underlying topologies.
\end{proof}

To further distinguish between the categories we note that under a set\enhyp{}preserving isomorphism, not only the underlying set is preserved but also the endomorphism monoid of the object (as a submonoid of the endomorphism monoid of the underlying set).

The first application of this enables us to distinguish the category of Fr\"olicher spaces from those of Chen and Souriau spaces.

\begin{proposition}
\label{prop:uniquer}
The only Fr\"olicher structures on \R whose endomorphism monoid contains \(\Ci(\R,\R)\) are the standard, the discrete, and the indiscrete structures.
In particular, the only Fr\"olicher structre on \R whose endomorphism monoid is precisely \(\Ci(\R,\R)\) is the standard structure.
\end{proposition}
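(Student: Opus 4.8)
The plan is to work directly with the two families $\m{C}\subseteq\map(\R,\R)$ and $\m{F}\subseteq\map(\R,\R)$ and to read the hypothesis ``$\mathrm{End}\supseteq\Ci(\R,\R)$'' as two closure conditions: since every smooth self-map of $\R$ is an endomorphism, $\m{C}$ is closed under post-composition with maps in $\Ci(\R,\R)$ and $\m{F}$ is closed under pre-composition with maps in $\Ci(\R,\R)$. I would combine these with the facts that hold for \emph{every} Fr\"olicher structure: all constant maps lie in both $\m{C}$ and $\m{F}$; $\m{C}$ is closed under reparametrisation and $\m{F}$ under post-composition with smooth functions; and $\m{C}$ is local in its domain, since $f\circ c$ is smooth iff it is smooth near each point. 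I would then record the two \emph{identity criteria}: $\mathrm{id}_\R\in\m{F}$ iff $\m{C}\subseteq\Ci(\R,\R)$, and $\mathrm{id}_\R\in\m{C}$ iff $\m{F}\subseteq\Ci(\R,\R)$. These suggest splitting on whether the curves, resp.\ the functionals, are all smooth.

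Two lemmas do most of the work. The first is an \emph{inverse-function/locality} lemma: if $\m{C}$ contains a non-constant smooth curve $c$, then $c$ is immersive near some $t_0$, so post-composing with a smooth extension of the local inverse gives a curve in $\m{C}$ that agrees with $\mathrm{id}_\R$ near $t_0$; translating and reparametrising this curve realises $\mathrm{id}_\R$ near every point, and locality of $\m{C}$ then forces $\mathrm{id}_\R\in\m{C}$, whence $\m{C}\supseteq\Ci(\R,\R)$ and $\m{F}\subseteq\Ci(\R,\R)$. A companion statement is that a non-constant smooth functional $f\in\m{F}$, pre-composed to be immersive near the image of a chosen point, forces every \emph{continuous} curve in $\m{C}$ to be smooth. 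The second is a \emph{separation} lemma handling discontinuity: if some $c_0\in\m{C}$ is discontinuous at $t^{*}$, choose $t_n\to t^{*}$ with $c_0(t_n)\to L\ne c_0(t^{*})$; for arbitrary $x,y$ pick a smooth $\rho$ that is constant equal to $y$ near $L$ and takes the value $x$ at $c_0(t^{*})$. Then $\rho\circ c_0\in\m{C}$, so $f\circ\rho\circ c_0$ is smooth, hence continuous at $t^{*}$, and evaluating the limit gives $f(y)=f(x)$ for every $f\in\m{F}$; thus $\m{F}$ is the family of constants and $\m{C}=\map(\R,\R)$. With these the clean cases fall out: if all curves and all functionals are smooth then $\m{C}=\m{F}=\Ci(\R,\R)$ (standard); if $\m{C}$ has only constant curves then by reflexivity $\m{F}=\map(\R,\R)$ (discrete); and a discontinuous curve yields $\m{C}=\map(\R,\R)$ with $\m{F}$ the constants (indiscrete).

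The one configuration not covered above, and the step I expect to be the main obstacle, is when $\m{C}$ contains a \emph{continuous, non-smooth} curve $c_0$ while $\m{F}$ still contains a non-constant functional. Here neither identity criterion applies, the separation lemma is unavailable (nothing is discontinuous), and the first lemma only excludes non-constant \emph{smooth} functionals. My plan is to rule this out by a derivative argument modelled on the prototype $c_0(t)=\lvert t\rvert$: since $c_0$ is continuous and non-constant, its image near the bad point is a non-degenerate interval $J$, and for every $f\in\m{F}$ and every smooth $\rho$ the composite $f\circ\rho\circ c_0$ is smooth. Testing with affine $\rho$ shows that each such $f$ is one-sidedly differentiable along $J$; computing the first derivative of $f\circ\rho\circ c_0$ and letting $\rho'$ vary then forces $f'\equiv 0$ on $J$, so $f$ is locally constant there, and post-composing $c_0$ with smooth maps whose images sweep out $\R$ propagates this to make $f$ constant. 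Again $\m{F}$ collapses to the constants and $\m{C}=\map(\R,\R)$, which contradicts the standing assumption that $\m{C}$ has no non-constant smooth curve (and otherwise simply lands us in the indiscrete structure). The delicate point I would treat carefully is extracting this derivative information when $c_0$ is continuous but badly behaved, e.g.\ non-monotone or nowhere differentiable; I expect to reduce to parameter intervals on which $c_0$ genuinely varies and to exploit smoothness of $f\circ\rho\circ c_0$ for a sufficiently rich family of $\rho$.

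The three surviving structures are then exactly the standard, discrete, and indiscrete ones, which proves the first assertion. For the \emph{in particular}: in both the discrete and the indiscrete structures the endomorphism monoid is \emph{all} self-maps of $\R$, since any $\phi$ satisfies $f\circ\phi\in\m{F}=\map(\R,\R)$ in the discrete case and $\phi\circ c\in\m{C}=\map(\R,\R)$ in the indiscrete case; only the standard structure has endomorphism monoid precisely $\Ci(\R,\R)$.
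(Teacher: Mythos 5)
Most of your reduction is sound and in fact runs parallel to the paper's proof: the closure properties extracted from the endomorphism hypothesis, the bump/separation argument killing a discontinuous curve, and the local-inverse argument showing that a non-constant smooth curve forces \(\mathrm{id}_\R \in \m{C}\) all correspond to steps the paper takes. (One small slip there: a discontinuous \(c_0\) need not admit \(t_n \to t^*\) with \(c_0(t_n)\) converging to a \emph{finite} \(L\); the paper avoids this by composing with a bump function that is constant outside an \(\epsilon\)-neighbourhood of \(c_0(t^*)\), which needs no accumulation point. This is cosmetic.) The genuine gap is exactly where you suspected it: the case of a continuous non-smooth curve in \(\m{C}\) with \(\m{F}\) non-constant. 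Your proposed derivative argument does not work. First, ``computing the first derivative of \(f \circ \rho \circ c_0\)'' presupposes differentiability of \(c_0\) (false by hypothesis) or of \(f\) (not yet established), and the claim that testing with affine \(\rho\) yields one-sided differentiability of \(f\) is not a deduction available by elementary means. Second, your prototype \(c_0(t) = \lvert t\rvert\) is misleading: there the composite \(f\rho c_0\) is smooth and \emph{even}, so Whitney's theorem gives \(f(\rho(s)) = g(s^2)\) near \(0\), whence \(f'(\rho(0)) = 0\) for every \(\rho\) and \(f\) is indeed constant --- but this exploits the fold of \(\lvert t\rvert\) and has no analogue when \(c_0\) is injective. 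And the injective case is the real one.

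Indeed, the paper shows in the non-degenerate situation that \(\m{C}\) contains a \emph{homeomorphism} \(\gamma\), so that \(\m{C} = \gamma\,\Ci(\R,\R)\) and \(\m{F} = \Ci(\R,\R)\,\gamma^{-1}\): the pushforward of the standard structure along \(\gamma\), a perfectly good Fr\"olicher space whose functionals include the injective map \(\gamma^{-1}\). No argument concluding ``\(f\) is locally constant, hence \(\m{F}\) collapses'' can succeed against such a structure; the correct endgame in this branch is not a contradiction but the conclusion that the structure is \emph{standard}. What has to be proved is that the endomorphism condition \(\gamma\, \Ci(\R,\R)\, \gamma^{-1} \supseteq \Ci(\R,\R)\), equivalently \(\gamma^{-1}\, \Ci(\R,\R)\, \gamma \subseteq \Ci(\R,\R)\), forces the homeomorphism \(\gamma\) to be a diffeomorphism. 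This is a rigidity theorem of Takens, whose one-dimensional core --- a homeomorphism \(\Phi\) of \(\R\) such that \(\Phi^{-1}(\Phi(y) + c)\) is differentiable for all \(y, c\) is itself differentiable, with differentiable inverse --- is a genuine theorem and is precisely the content your ``letting \(\rho'\) vary'' step would have to reproduce; if your two-line chain-rule manipulation were valid it would prove Takens' theorem outright, which is the red flag. (The paper must even check that Takens' proof survives weakening his ``if and only if'' hypothesis to the one-sided inclusion available here.) So your proof covers the degenerate branches correctly but is missing the essential rigidity input at the decisive step.
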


The proof of this depends on a modification of a result of Takens~\cite{ft}.
(Interestingly, this paper was motivated by reading about what we are calling Souriau spaces.)

\begin{theorem}{{\cite[Theorem~1]{ft}}}
Let \(\Phi \colon M_1 \to M_2\) be a bijection between two smooth \(n\)\enhyp{}manifolds such that \(\lambda \colon M_2 \to M_2\) is a diffeomorphism iff \(\Phi^{-1} \circ \lambda \circ \Phi\) is a diffeomorphism.
Then \(\Phi\) is a diffeomorphism. \noproof
\end{theorem}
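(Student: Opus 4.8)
The plan is to recover first the topology and then the smooth structure of $M_2$ from the group $\mathrm{Diff}(M_2)$, using that $\Phi$ conjugates $\mathrm{Diff}(M_1)$ isomorphically onto $\mathrm{Diff}(M_2)$. Writing the hypothesis as $g \in \mathrm{Diff}(M_1) \iff \Phi g \Phi^{-1} \in \mathrm{Diff}(M_2)$, the first observation is that $\Phi$ carries fixed-point sets to fixed-point sets: for $g \in \mathrm{Diff}(M_1)$ one has $\mathrm{Fix}(\Phi g \Phi^{-1}) = \Phi(\mathrm{Fix}(g))$, purely set-theoretically. I would then prove the elementary lemma that on a manifold of dimension $\geq 1$ every closed set $C$ is an intersection of fixed-point sets of diffeomorphisms: given $p \notin C$, a bump diffeomorphism supported in a small ball about $p$ disjoint from $C$ fixes $C$ but moves $p$, so $\bigcap\{\mathrm{Fix}(g) : C \subseteq \mathrm{Fix}(g)\} = C$. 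Since $\Phi$ and $\Phi^{-1}$ both preserve fixed-point sets, and hence arbitrary intersections of them, they carry closed sets to closed sets; thus $\Phi$ is a homeomorphism. (The zero-dimensional case is trivial, every bijection of discrete manifolds being a diffeomorphism.)

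\textbf{From continuity to flows.} Next I would upgrade this to smoothness using one-parameter groups. Let $X$ be a complete smooth vector field on $M_1$ with flow $\phi_t = \exp(tX)$. Each $\phi_t$ lies in $\mathrm{Diff}(M_1)$, so $\psi_t := \Phi \phi_t \Phi^{-1}$ lies in $\mathrm{Diff}(M_2)$, and $t \mapsto \psi_t$ is an abstract one-parameter subgroup. Because $\Phi$ is now known to be a homeomorphism and $\phi$ is a continuous flow, the map $(t,y) \mapsto \psi_t(y)$ is jointly continuous. The decisive input is then the theorem of Bochner and Montgomery: a jointly continuous one-parameter group of $C^\infty$ diffeomorphisms of a manifold is automatically a $C^\infty$ flow, hence is generated by a smooth vector field $Y$ on $M_2$. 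Thus $\Phi$ intertwines the flows, $\Phi \circ \phi_t = \psi_t \circ \Phi$, and the right-hand flow is smooth in $(t,y)$.

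\textbf{Flow-box coordinates.} To conclude, I would set up flow-box coordinates. Fix $p \in M_1$ and choose, in a chart about $p$, the coordinate vector fields cut off by a bump function so as to be complete; call them $X_1, \dots, X_n$ with flows $\phi^1, \dots, \phi^n$, arranged so that $F(t_1,\dots,t_n) = \phi^1_{t_1}\cdots\phi^n_{t_n}(p)$ is a diffeomorphism from a neighbourhood of $0$ in $\R^n$ onto a neighbourhood of $p$. Let $\psi^i$ be the smooth flows produced on $M_2$ by the previous step and put $G(t_1,\dots,t_n) = \psi^1_{t_1}\cdots\psi^n_{t_n}(\Phi(p))$, a smooth map. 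The intertwining relations give $\Phi \circ F = G$, whence $\Phi = G \circ F^{-1}$ is smooth near $p$. As $p$ was arbitrary, $\Phi$ is smooth; applying the identical argument to $\Phi^{-1}$, which satisfies the same hypothesis, shows $\Phi^{-1}$ is smooth. Hence $\Phi$ is a diffeomorphism.

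\textbf{Main obstacle.} The hard part is the passage from the algebraic fact that $\Phi$ maps flows to abstract one-parameter subgroups to the analytic fact that these subgroups are genuine smooth flows: an arbitrary homomorphism $\R \to \mathrm{Diff}(M_2)$ can be extremely irregular, so one cannot dispense with the homeomorphism step, which supplies exactly the joint continuity that the Bochner--Montgomery theorem converts into smoothness. Care is also needed to ensure the chosen coordinate vector fields are complete (so that their time-$t$ maps really are elements of the group) while still agreeing with the coordinate directions near $p$; the standard cutoff trick handles this without affecting the local flow-box.
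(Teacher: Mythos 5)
Your proof is correct. The fixed-point argument is sound: $\mathrm{Fix}(\Phi g \Phi^{-1}) = \Phi(\mathrm{Fix}(g))$ holds set-theoretically, every closed set in a positive-dimensional manifold is an intersection of fixed-point sets of compactly supported bump diffeomorphisms, and you correctly use \emph{both} directions of the ``iff'' (one to push fixed-point sets forward through $\Phi$, the other to pull them back through $\Phi^{-1}$), so $\Phi$ is a homeomorphism. The conjugated flows $\psi_t = \Phi \phi_t \Phi^{-1}$ are then jointly continuous one-parameter groups of smooth diffeomorphisms, Bochner--Montgomery applies, and the flow-box factorisation $\Phi = G \circ F^{-1}$ together with the symmetry of the hypothesis under $\Phi \leftrightarrow \Phi^{-1}$ finishes the job.

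It is, however, a genuinely different proof from the one the paper points to: the paper states the theorem without proof, citing Takens, and Takens' original argument does not invoke Bochner--Montgomery. After the homeomorphism step his analytic core is elementary and one-dimensional\emhyp{}his Lemma~3.1, that a homeomorphism $\Phi \colon \R \to \R$ for which $y \mapsto \Phi^{-1}(\Phi(y) + c)$ is differentiable for every $c$ is itself differentiable, followed by a bootstrap to higher derivatives and higher dimensions. Your route (conjugate complete flows, smooth the resulting continuous one-parameter groups by Bochner--Montgomery, conclude in flow-box coordinates) is the standard modern argument in the diffeomorphism-group literature; it is conceptually cleaner and more clearly generalisable, at the price of importing a deep black box. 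What Takens' self-contained route buys\emhyp{}and the reason this paper cares\emhyp{}is access to the internals of the proof: in Proposition~\ref{prop:uniquer} the author needs the theorem under \emph{weakened} hypotheses (only the one-sided inclusion $\gamma^{-1} \Ci(\R,\R) \gamma \subseteq \Ci(\R,\R)$, with the homeomorphism property already in hand), and he obtains it by checking that Takens' Lemma~3.1 survives the weakening. Your argument would in fact adapt as well: each $\gamma^{-1} \tau_c \gamma$ (with $\tau_c$ translation) is a diffeomorphism with smooth inverse $\gamma^{-1} \tau_{-c} \gamma$, the group $t \mapsto \gamma^{-1} \tau_t \gamma$ is jointly continuous and fixed-point free for $t \neq 0$, so Bochner--Montgomery yields a nowhere-vanishing generator and evaluating the smooth flow at a point exhibits $\gamma^{-1}$ as smooth with nonzero derivative\emhyp{}but that adaptation is not the one carried out in the paper, which dissects Takens' proof instead.
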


\begin{proof}[{Proof of Proposition~\ref{prop:uniquer}}]
Obviously those three structures have the property that their endomorphism monoids contain \(\Ci(\R,\R)\).
Thus, to show the converse, let \((\R, \m{C}, \m{F})\) be a Fr\"olicher space with endomorphism monoid containing \(\Ci(\R,\R)\) which is neither discrete nor indiscrete.

Then for a smooth map \(\psi \colon \R \to \R\), \(\phi \psi \alpha \in \Ci(\R,\R)\) for all \(\phi \in \m{F}\) and \(\alpha \in \m{C}\).
In particular, this implies that if \(\psi \in \Ci(\R,\R)\) and \(\alpha \in \m{C}\) then \(\psi \alpha \in \m{C}\), similarly for \(\m{F}\).

We start by showing that \(\m{C}\) consists of continuous maps.
Suppose, for a contradiction, that \(\alpha \in \m{C}\) is not continuous at, say, \(x \in \R\).
Then there is a sequence \((x_n) \to x\) such that \(\alpha(x_n) \not\to \alpha(x)\).
By passing to a subsequence if necessary, we can assume that there is some \(\epsilon > 0\) such that \(\abs{\alpha(x_n) - \alpha(x)} \ge \epsilon\) for all \(n\).
Let \(\psi\) be a bump function at \(\alpha(x)\) which vanishes outside \((\alpha(x) - \epsilon, \alpha(x) + \epsilon)\).
Then \(\psi \alpha \in \m{C}\) but \(\psi \alpha(x) = 1\) and \(\psi \alpha(x_n) = 0\) for all \(n\).
Now for \(\phi \in \m{F}\), \(\phi \psi \alpha \in \Ci(\R,\R)\).
Thus \(\phi \psi \alpha\) is continuous and so \((\phi \psi \alpha(x_n)) \to \phi \psi \alpha(x)\).
Hence \(\phi(0) = \phi(1)\).
There was nothing special about \(0\) and \(1\) and thus \(\phi\) is constant.
As this holds for arbitrary \(\phi\), \(\m{F}\) consists only of constant functions.
However, the only Fr\"olicher structure on \R for which this holds is the indiscrete one which we assumed was not the structure under consideration.
Thus \(\m{C}\) consists of continuous functions.

Now we want to show that at least one curve in \(\m{C}\) is injective on some interval.
Suppose, for another contradiction, that this is not true.
Then for every curve \(\alpha \in \m{C}\) and interval \((a,b) \subseteq \R\), \(\alpha\) is not injective on \((a,b)\).
Let \(\phi \in \m{F}\).
Then \(\phi \alpha\) has the same property.
As \(\phi \alpha\) is smooth, this implies that \(\phi \alpha\) has zero differential everywhere and hence is constant.
As this holds for all \(\phi\) and all \(\alpha\), either \(\m{C}\) or \(\m{F}\) must consist only of constant functions.
This results in either the discrete or indiscrete Fr\"olicher structures, which again are ruled out by assumption.

Thus there is some \(\alpha \in \m{C}\) which is injective on some interval.
By restricting first to a compact subinterval and then to an open subinterval of this, we see that \(\alpha\) is a homeomorphism from some open interval to another open interval.
By pre\hyp{}\hyp{}composing with an appropriate smooth function, we obtain a homeomorphism \(\beta \colon \R \to (a,b)\) in \(\m{C}\).
Choose a diffeomorphism \(\psi \colon (a,b) \to \R\).
We claim that \(\psi \beta \in \m{C}\).
To see this, note that for any bounded interval \(I\) in \R, there is a smooth function \(\psi_I \colon \R \to \R\) such that \(\psi_I \beta\) and \(\psi \beta\) agree on \(I\).
By our assumption on \((\R, \m{C}, \m{F})\), \(\psi_I \beta \in \m{C}\) and thus \(\psi \beta \in \m{C}\).

Hence \(\m{C}\) contains a homeomorphism, say \(\gamma\).

Now we turn our attention to \(\m{F}\).
For \(\phi \in \m{F}\), \(\phi \gamma \in \Ci(\R,\R)\).
As \(\gamma\) is a homeomorphism, we can write \(\phi\) as \(\phi \gamma \gamma^{-1}\).
Thus \(\m{F} \subseteq \Ci(\R,\R) \gamma^{-1}\).
Since \(\m{F}\) must contain a non\hyp{}constant map, there is some non\hyp{}constant \(\psi \in \Ci(\R,\R)\) with \(\psi \gamma^{-1} \in \m{F}\).
As \(\psi\) is non\hyp{}constant, there are bounded intervals, say \(I, J \subseteq \R\), such that \(\psi\) restricts to a diffeomorphism \(I \to J\).
By restricting if necessary, we can assume that the inverse is the restriction of a smooth map \(\theta \colon \R \to \R\).
Then \(\theta \psi\) is the identity on \(I\) and so \(\theta \psi \gamma^{-1}\) equals \(\gamma^{-1}\) on \(\gamma(I) (= (\gamma^{-1})^{-1}(I))\).
Since \(\m{F}\) is invariant under translation (as \(\Ci(\R,\R)\) acts on it on the \emph{right}), we see that \(\gamma^{-1}\) is locally in \(\m{F}\) and hence in \(\m{F}\).
Thus \(\m{F} = \Ci(\R,\R) \gamma^{-1}\).

We therefore see that \(\m{C} = \gamma \Ci(\R,\R)\).
Thus the Fr\"olicher structure is \((\R, \gamma \Ci(\R,\R), \Ci(\R,\R) \gamma^{-1})\).
The endomorphism monoid of this structure is \(\gamma \Ci(\R,\R) \gamma^{-1}\).
Thus \(\gamma \colon \R \to \R\) is a homeomorphism with the property that \(\gamma \Ci(\R,\R) \gamma^{-1} \supseteq \Ci(\R,\R)\).
Equivalently, that \(\gamma^{-1} \Ci(\R,\R) \gamma \subseteq \Ci(\R,\R)\).
This does not quite fit the hypotheses for Takens' theorem as that requires equality here.
However, careful examination of \cite{ft} shows that the proof still applies.
It seems that the ``if and only if'' part of the hypotheses of Takens' theorem is required to prove that the bijections are homeomorphisms.
As we already have that property, we can assume weaker conditions.

In detail, the proof of \cite[Lemma~3.1]{ft} shows that if (using notation of \cite{ft}) \(\Phi \colon \R \to \R\) is a homeomorphism such that \(\Phi^{-1}(\Phi(y) + c)\) is differentiable for all \(y, c \in \R\) then \(\Phi\) and \(\Phi^{-1}\) are differentiable everywhere.
Then the rest of the proof of \cite[Theorem~2]{ft} for the case \(n = 1\) applies as stated.
Hence our Fr\"olicher structure on \R is the standard one.
\end{proof}

\begin{corollary}
The only set\enhyp{}preserving automorphism of the category of Fr\"olicher spaces is the identity. \noproof
\end{corollary}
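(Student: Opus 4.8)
The plan is to use Proposition~\ref{prop:uniquer} to pin down where a set\enhyp{}preserving automorphism \(\func{F}\) of \(\fcat\) sends the standard real line, and then to exploit the fact that a Fr\"olicher structure is completely determined by its morphisms to and from \(\R\) in order to conclude that \(\func{F}\) fixes every object. First I would recall, as observed in the paragraph preceding this corollary, that a set\enhyp{}preserving automorphism carries each object to an object with the same underlying set and preserves its endomorphism monoid as a submonoid of the monoid of all self\hyp{}maps of that set. Applying this to the standard structure on \(\R\): a self\hyp{}map of \(\R\) is an endomorphism of the standard structure precisely when it is smooth, so the endomorphism monoid of standard \(\R\) is exactly \(\Ci(\R,\R)\). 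Since \(\func{F}\) is an isomorphism and set\enhyp{}preserving, the endomorphism monoid of \(\func{F}(\R)\), viewed inside \(\map(\R,\R)\), is again exactly \(\Ci(\R,\R)\). By the ``in particular'' clause of Proposition~\ref{prop:uniquer}, \(\func{F}(\R)\) must therefore be the standard structure, i.e.\ \(\func{F}(\R) = \R\).

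Next I would bootstrap from \(\R\) to an arbitrary Fr\"olicher space \(X = (X, \m{C}, \m{F})\). The key is the defining feature of Fr\"olicher spaces: a morphism \(c \colon \R \to X\) (with \(\R\) standard) is the same datum as a curve \(c \in \m{C}\), since taking \(c = 1_{\R}\) shows any such morphism lands in \(\m{C}\) and closure of \(\m{C}\) under precomposition with smooth maps gives the converse; dually, a morphism \(X \to \R\) is the same datum as a functional in \(\m{F}\). Because \(\func{F}\) is set\enhyp{}preserving and fixes \(\R\), it sends each \(c \in \m{C}_X\) to a morphism \(\R \to \func{F}(X)\) whose underlying set\hyp{}map is again \(c\), so every curve of \(X\) is a curve of \(\func{F}(X)\). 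Applying the same reasoning to \(\func{F}^{-1}\)---which also fixes \(\R\), being the inverse of an isomorphism that does---yields the reverse inclusion. Hence \(X\) and \(\func{F}(X)\) have identical curve families; since the functionals are then forced and the two structures share an underlying set, they coincide, so \(\func{F}(X) = X\). Finally, as \(\func{F}\) is set\enhyp{}preserving it acts as the identity on the underlying set\hyp{}map of each morphism, hence is the identity on morphisms, giving \(\func{F} = 1\).

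The only genuine content here is the appeal to Proposition~\ref{prop:uniquer}; the remainder is a routine unwinding of the definition of a Fr\"olicher space, so I do not anticipate a real obstacle. The single point needing a little care is confirming that \(\func{F}^{-1}\) also fixes standard \(\R\), but this is immediate from \(\func{F}(\R) = \R\) and invertibility.
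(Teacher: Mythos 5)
Your proof is correct and is precisely the argument the paper intends by stating the corollary without proof immediately after Proposition~\ref{prop:uniquer}: the proposition pins down \(\func{F}(\R) = \R\) via the preserved endomorphism monoid \(\Ci(\R,\R)\), and then the identification of morphisms \(\R \to X\) with \(\m{C}_X\) (and the fact that \(\m{F}_X\) is determined by \(\m{C}_X\)), applied to both \(\func{F}\) and \(\func{F}^{-1}\), forces \(\func{F}(X) = X\) on objects and hence, by concreteness, the identity on morphisms. No gaps.
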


The proof of the proposition involves much of the structure specific to Fr\"olicher spaces and thus would seem difficult to generalise to the other categories of interest, save perhaps for Smith spaces.
However, there is still something that can be said without too much extra work by looking at underlying Fr\"olicher spaces.

\begin{defn}
In each category, a \emph{pseudo\enhyp{}\R{}} is an object with underlying object \R (with its standard topology if appropriate) and endomorphism monoid \(\Ci(\R,\R)\).
\end{defn}

For Fr\"olicher spaces there is only one pseudo\enhyp{}\R.
We strongly suspect that this is true for the other spaces as well (though note that it is not true when the forcing conditions are very weak) but even without direct proof of this we can still show that the standard structure on \R is special.

The proposition above shows that there are at most three choices for the underlying Fr\"olicher space of a pseudo\enhyp{}\R.

\begin{proposition}
In the categories of Chen and Souriau spaces, there are no pseudo\enhyp{}\R{}s which map to the indiscrete Fr\"olicher structure on \R and only one that maps to the standard Fr\"olicher structure on \R.
In particular, if \(\R_\alpha\) is a pseudo\enhyp{}\R and \R denotes the ``standard'' \R then
\[
  \Hom{\acat}{\R_\alpha}{\R} = \begin{cases}
  \Ci(\R,\R) & \text{if } \R_\alpha = \R, \\
  \R & \text{otherwise}
  \end{cases}
\]
where \(\acat\) is either \(\ccat\) or \(\dcat\).

In the categories of Smith and Sikorski spaces, there are no pseudo\enhyp{}\R{}s which map to the discrete Fr\"olicher structure on \R and only one that maps to the standard Fr\"olicher structure on \R.
In particular, if \(\R_\alpha\) is a pseudo\enhyp{}\R and \R denotes the ``standard'' \R then
\[
  \Hom{\acat}{\R}{\R_\alpha} = \begin{cases}
  \Ci(\R,\R) & \text{if } \R_\alpha = \R, \\
  \R & \text{otherwise}
  \end{cases}
\]
where \(\acat\) is either \(\kcat\) or \(\scat\).
\end{proposition}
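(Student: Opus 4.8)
The plan is to push every question down to the underlying Frölicher space and then quote Proposition~\ref{prop:uniquer}. First observe that in each of these categories an endomorphism of an object covers an endomorphism of its underlying Frölicher space (it carries curves to curves and functionals to functionals); hence if \(\R_\alpha\) is a pseudo\enhyp{}\R, the endomorphism monoid of its underlying Frölicher space contains \(\Ci(\R,\R)\), so by Proposition~\ref{prop:uniquer} that underlying structure is the standard, the discrete, or the indiscrete one. The second observation is that the relevant \(\Hom\)\enhyp{}set is read off from a saturation condition: for \(\acat=\ccat\) or \(\dcat\) the output test functions are saturated, so \(\Hom{\acat}{\R_\alpha}{\R}\) is exactly the set of functionals of \(\R_\alpha\); dually, for \(\acat=\scat\) or \(\kcat\) the input test functions are saturated, so \(\Hom{\acat}{\R}{\R_\alpha}\) is exactly the set of smooth curves of \(\R_\alpha\). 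So I must decide which of the three Frölicher structures can carry a pseudo\enhyp{}\R and, for the survivors, identify the functionals (resp.\ curves).

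Two of the three possibilities fall immediately. For \(\ccat\) and \(\dcat\) the \emph{discrete} underlying structure has all maps as functionals; by saturation each plot \(p\) then satisfies \(fp\in\Ci\) for \emph{every} \(f\), including discontinuous ones, which forces every plot to be constant, so the endomorphism monoid is all self\enhyp{}maps of \(\R\) rather than \(\Ci(\R,\R)\). The same computation, with plots and functionals exchanged, eliminates the \emph{indiscrete} structure for \(\kcat\); and for \(\scat\), where both families are saturated, \emph{both} non\enhyp{}standard structures yield endomorphism monoid all self\enhyp{}maps, so the Smith case needs no further argument. In every remaining case the only possibly\enhyp{}surviving non\enhyp{}standard structure is the one with \emph{constant} functionals (for \(\ccat,\dcat\)) or \emph{constant} curves (for \(\kcat\)).

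For the standard structure I prove uniqueness by a bootstrap; take \(\acat=\dcat\) (Chen is identical since intervals are convex, and Smith/Sikorski are the evident duals, standard Smith being pinned down by Proposition~\ref{prop:uniquer} together with the fixed topology). If \(\R_\alpha\) has standard underlying structure then \(\mathrm{id}_\R\) is a functional, so every plot is a \cimap. The functionals are not all constant, so there is a non\enhyp{}constant plot \(p\colon U\to\R\); being a \cimap with \(dp\ne 0\) somewhere, \(p\) admits a smooth local section, which exhibits the inclusion of an open interval \((a,b)\hookrightarrow\R\) as a plot. Post\enhyp{}composing with translations and smooth maps from \(\Ci(\R,\R)=\mathrm{End}(\R_\alpha)\) and re\enhyp{}parametrising by diffeomorphisms of intervals realises every translate \((a+r,b+r)\hookrightarrow\R\) as a plot; these cover \(\R\), so the sheaf condition makes \(\mathrm{id}_\R\) a plot, whence \(\phi=\mathrm{id}_\R\circ\phi\) is a plot for every \cimap \(\phi\). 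Thus the plots are exactly the \cimaps, \(\R_\alpha\) is the standard Souriau \R, and \(\Hom{\dcat}{\R_\alpha}{\R}=\Ci(\R,\R)\).

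The genuinely delicate step\emhyp{}and the one I expect to be the main obstacle\emhyp{}is excluding the last non\enhyp{}standard structure: a Chen or Souriau structure whose functionals are only the constants (dually, a Sikorski structure whose curves are only the constants) yet whose endomorphism monoid is exactly \(\Ci(\R,\R)\). My proposed approach is again a sheaf\enhyp{}bootstrap. Since the functionals collapse to constants, \(\mathrm{id}_\R\) is not a functional, so some plot is not a \cimap, and restricting to a line (using Boman's theorem \cite{jb3} to guarantee a non\enhyp{}smooth line\enhyp{}restriction) yields a non\enhyp{}smooth one\enhyp{}dimensional plot. One then tries to combine closure under smooth post\enhyp{}composition (which \(\Ci(\R,\R)\subseteq\mathrm{End}\) provides) with the input sheaf condition to generate enough plots that either \(\mathrm{id}_\R\) becomes a plot while a wild plot is retained\emhyp{}forcing a non\enhyp{}smooth self\enhyp{}map into \(\mathrm{End}\)\emhyp{}or the plots exhaust all maps; either way \(\mathrm{End}\) strictly exceeds \(\Ci(\R,\R)\), a contradiction. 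The difficulty is that a general non\enhyp{}smooth plot need not have a section, so manufacturing a usable coordinate is the subtle part. The Sikorski statement is the exact dual, exchanging curves for functionals and the input sheaf condition for the output (locally detectable) one. With these non\enhyp{}existence results in hand, the only pseudo\enhyp{}\R is the standard one, so the first clause of each displayed \(\Hom\)\enhyp{}formula holds and the second clause holds vacuously\emhyp{}equivalently, it records that any non\enhyp{}standard pseudo\enhyp{}\R would have only constant functionals (resp.\ curves), giving \(\Hom=\R\)\emhyp{}which is the asserted boundary value.
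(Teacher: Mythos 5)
Your ``genuinely delicate step'' is a genuine gap, and not one you should expect to close: the paragraph ends with ``one then tries'' and an admitted failure to manufacture the needed coordinate, so as written the proposal does not exclude a Chen or Souriau structure with only constant functionals (dually, a Sikorski structure with only constant curves) whose endomorphism monoid is exactly \(\Ci(\R,\R)\). Nor does the paper: immediately before this proposition it says only that it \emph{strongly suspects} uniqueness of the pseudo\enhyp{}\R{} in these categories (noting it fails for weak forcing conditions), and its proof explicitly countenances such objects\emhyp{}``all the other pseudo\enhyp{}\R{}s must map to the indiscrete Fr\"olicher structure''\emhyp{}disposing of them not by non\hyp{}existence but by the computation you also supply: faithfulness of the functor to Fr\"olicher spaces (equivalently, your saturation remark) pins \(\Hom{\dcat}{\R_\alpha}{\R}\) to the constants. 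What misled you is a transposition in the statement itself: comparing with the proof, and with the paper's later remark that ``a pseudo\enhyp{}\R cannot be the discrete structure'' in the Chen/Souriau setting, the words ``indiscrete'' and ``discrete'' in the two halves of the proposition are swapped. The claim actually proved is that no pseudo\enhyp{}\R{} maps to the \emph{discrete} Fr\"olicher structure in \(\ccat\) and \(\dcat\), and none to the \emph{indiscrete} structure in \(\kcat\) and \(\scat\)\emhyp{}precisely the two cases you dispatch ``immediately''.

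Read against the intended statement, the rest of your proposal is essentially the paper's proof: reduction to three candidate underlying structures via Proposition~\ref{prop:uniquer}; the immediate exclusions; the translation\hyp{}plus\hyp{}sheaf bootstrap showing that standard underlying structure forces the standard test structure (the paper obtains the identity on an interval by precomposing a non\hyp{}constant plot with a suitable \cimap rather than via a smooth local section, but the argument is the same); and the constants computation in the remaining case, so your closing hedge (``equivalently, it records\ldots'') is exactly the intended content and the delicate step is superfluous. Two smaller corrections: in the Smith and Sikorski cases the endomorphism monoids you compute are all \emph{continuous} self\hyp{}maps, not all self\hyp{}maps (the underlying category there is topological), which still properly contains \(\Ci(\R,\R)\), so the conclusions stand; and your one\hyp{}line elimination of the discrete case for \(\scat\) silently uses that Smith's closure condition forces the functionals to be \emph{all} continuous functions once the plots are constant. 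That is in fact true, and since the topology of a pseudo\enhyp{}\R{} is fixed as the standard one it does rule out the discrete case for Smith spaces\emhyp{}a correct strengthening that the paper does not claim\emhyp{}but it deserves to be said explicitly rather than bundled into ``the same computation''.
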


\begin{proof}
The arguments are essentially the same for all four cases.
We shall consider Souriau spaces in detail and then indicate the necessary changes for the others.

When forming a Fr\"olicher space from a Souriau space one only adds functions from \R to those already in the plots.
The output functions stay the same.
Thus if a Souriau space maps to a discrete Fr\"olicher space, the only plots from \R in the Souriau structure can have been constant plots.
Therefore the only plots from any test object can have been locally constant and so the Souriau structure must have been the discrete one.
In particular, only the discrete Souriau structure on \R maps to the discrete Fr\"olicher structure and the discrete Souriau structure is not a pseudo\enhyp{}\R.

Now suppose that a \(\R_\alpha\) is a pseudo\enhyp{}\R which maps to the standard Fr\"olicher structure on \R.
By the above, the Souriau structure cannot have been the discrete one and thus there is a non\enhyp{}constant map \(\R \to \R\) in the plots of \(\R_\alpha\).
As this becomes a smooth in the Fr\"olicher structure, it must be a \cimap.
It is therefore a diffeomorphism when restricted to some interval.
By precomposition with an appropriate \cimap, we can obtain the identity map on some interval in the Souriau structure on \(\R_\alpha\).
As the endomorphism monoid of \(\R_\alpha\) is \(\Ci(\R,\R)\), the Souriau structure must be translation invariant.
The identity on \R is therefore locally in the Souriau structure of \(\R_\alpha\) and hence, by the sheaf condition, the identity itself is in the structure.
Thus the Souriau structure on \(\R_\alpha\) contains \(\Ci(\R,\R)\).
On the other hand, it can contain no more than this as it maps to the standard Fr\"olicher structure on \R.
Hence \(\R_\alpha\) is the standard Souriau structure on \R.

All the other pseudo\enhyp{}\R{}s must map to the indiscrete Fr\"olicher structure.
As the functor from Souriau spaces to Fr\"olicher spaces is faithful, the set of morphisms from another pseudo\enhyp{}\R{} \(\R_\alpha\) to \R must be contained in the set of morphisms on the underlying Fr\"olicher spaces.
However, the only morphisms from the indiscrete Fr\"olicher structure on \R to the standard one are the constant maps.

The argument is exactly the same for Chen spaces.
For Smith and Sikorski, one swaps the r\^oles of the input and output functions in the various arguments.
\end{proof}

The key in this proof is obviously that the forcing conditions in each case are sufficient to guarantee that if the identity on \R is locally a test function then it is a test function.

This result singles out the standard structure on \R in each category because although, in the Chen and Souriau categories, there are no morphisms into \R with its standard structure from a pseudo\enhyp{}\R, there must be morphisms out of it because a pseudo\enhyp{}\R cannot be the discrete structure.
It can be thought of, therefore, as a sort of ``initial'' pseudo\enhyp{}\R for Chen, Souriau, and Fr\"olicher spaces and a ``final'' pseudo\enhyp{}\R for Sikorski, Smith, and Fr\"olicher spaces.

\begin{corollary}
Any set\enhyp{}preserving isomorphism between two of the categories of Chen, Souriau, or Fr\"olicher spaces preserves \R with its standard structure.
Similarly between two of the categories of Sikorski, Smith, or Fr\"olicher spaces.
\end{corollary}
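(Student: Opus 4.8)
The plan is to lean on the two structural features of a set\enhyp{}preserving isomorphism: it fixes underlying sets, and, as noted just before Proposition~\ref{prop:uniquer}, it preserves the endomorphism monoid of each object as a submonoid of the self\enhyp{}maps of its underlying set. First I would record the immediate consequences. If \(\func{F} \colon \m{A} \to \m{B}\) is such an isomorphism between two of our categories, then \(\func{F}\) carries pseudo\enhyp{}\R{}s to pseudo\enhyp{}\R{}s (these being exactly the objects with underlying set \(\R\) and endomorphism monoid \(\Ci(\R,\R)\)), does so bijectively, and for any objects \(A,B\) identifies \(\Hom{\m{A}}{A}{B}\) with \(\Hom{\m{B}}{\func{F}(A)}{\func{F}(B)}\) as one and the same subset of \(\Set(\sabs{A},\sabs{B})\). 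Consequently any description of the standard \(\R\) phrased purely in terms of hom\hyp{}sets between pseudo\enhyp{}\R{}s is automatically preserved by \(\func{F}\).

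The heart of the matter is therefore to give such a categorical description. For the trio of Chen, Souriau, and Fr\"olicher spaces I would characterise the standard \(\R\) as the unique pseudo\enhyp{}\R{} \(R\) satisfying both: (i) every morphism into \(R\) from a pseudo\enhyp{}\R{} other than \(R\) is constant; and (ii) \(R\) admits a non\enhyp{}constant morphism into every pseudo\enhyp{}\R. Condition (i) is precisely the hom\hyp{}set formula of the preceding proposition. For (ii) I would use that a pseudo\enhyp{}\R{} is never the discrete structure, so its family of plots, equivalently its smooth curves, contains a non\enhyp{}constant member; since the morphisms out of the standard \(\R\) are exactly its smooth curves, \(R\) maps non\enhyp{}constantly into any other pseudo\enhyp{}\R. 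This is the sense in which the standard \(\R\) is ``initial''.

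With (i) and (ii) in hand the uniqueness is quick: if a non\enhyp{}standard pseudo\enhyp{}\R{} \(\R_\beta\) also satisfied (i), then (i) applied to \(\R_\beta\) and the standard \(R\) would force \(\Hom{\m{A}}{R}{\R_\beta}\) to consist only of constants, contradicting (ii) for \(R\). Hence the standard \(\R\) is the only pseudo\enhyp{}\R{} meeting both conditions. As this description uses only hom\hyp{}sets between pseudo\enhyp{}\R{}s, \(\func{F}\) preserves it, so \(\func{F}\) sends the standard \(\R\) of \(\m{A}\) to that of \(\m{B}\). When Fr\"olicher spaces are involved one may instead quote Proposition~\ref{prop:uniquer}, by which the standard structure is the \emph{only} pseudo\enhyp{}\R, so nothing need be checked internally and the argument above disposes of the mixed pairs. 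The second assertion, for Sikorski, Smith, and Fr\"olicher spaces, follows by the same reasoning with inputs and outputs interchanged, the standard \(\R\) now singled out as the ``final'' pseudo\enhyp{}\R{} by the dual hom\hyp{}set formula.

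The step I expect to be the main obstacle is (ii): establishing that the standard \(\R\) really does map non\enhyp{}constantly into \emph{every} other pseudo\enhyp{}\R. It is this asymmetry between maps in and maps out that breaks the otherwise symmetric picture and forces uniqueness; without it a non\enhyp{}standard pseudo\enhyp{}\R{} could a priori share property (i) with the standard one. The verification rests entirely on the fact, extracted from the proof of the preceding proposition, that a pseudo\enhyp{}\R{} cannot be the discrete structure and so must carry some non\enhyp{}constant plot.
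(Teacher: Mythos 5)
Your proof is correct and is essentially the paper's own argument: both rest on \(\func{F}\) bijecting pseudo\enhyp{}\R{}s and identifying hom\hyp{}sets, the hom\hyp{}set formula of the preceding proposition (your (i)), and the fact that a pseudo\enhyp{}\R{} is never discrete, so the standard \R maps non\hyp{}constantly into it (your (ii), which is exactly the ``initial pseudo\enhyp{}\R{}'' remark preceding the corollary). The paper simply runs these two ingredients directly on \(\R_\alpha = \func{F}^{-1}(\R)\) instead of packaging them as a uniqueness characterisation, which is the same content.
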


\begin{proof}
The two cases are formally similar so we shall just do one.
Let \(\acat\) and \(\bcat\) be two of the categories of Chen, Souriau, or Fr\"olicher spaces and suppose that \(\func{F} \colon \acat \to \bcat\) is a set\enhyp{}preserving isomorphism.
Then \(\func{F}\) defines a bijection from the set of pseudo\enhyp{}\R{}s in \(\acat\) to that in \(\bcat\).
Let us write \R for the standard structure on \R in each category.

Let \(\R_\alpha\) be such that \(\func{F}(\R_\alpha) = \R\).
Then
\[
  \Hom{\acat}{\R_\alpha}{\R} = \Hom{\bcat}{\func{F}(\R_\alpha)}{\func{F}(\R)} = \Hom{\bcat}{\R}{\func{F}(\R)}.
\]
As \(\func{F}(\R)\) is a pseudo\enhyp{}\R, it is not discrete and so the latter hom\hyp{}set must contain something non\hyp{}trivial.
Thus \(\Hom{\acat}{\R_\alpha}{\R} \ne \R\) and so \(\R_\alpha = \R\).
Hence \(\func{F}\) preserves the ``standard'' \R.
\end{proof}

\begin{corollary}
Any set\enhyp{}preserving isomorphism between two of the categories of Chen, Souriau, or Fr\"olicher spaces on the one hand, or between two of the categories of Sikorski, Smith, or Fr\"olicher spaces on the other, preserves the test objects.
\end{corollary}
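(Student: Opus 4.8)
The plan is to bootstrap off the previous corollary, which tells us that any such set-preserving isomorphism $\func{F}$ fixes the standard $\R$, together with the fact that an isomorphism of categories preserves every categorical construction exactly. First I would observe that in each of Chen, Souriau, Fr\"olicher, Smith, and Sikorski spaces the standard Euclidean space $\R^n$ is the $n$-fold product of the standard $\R$: a morphism into $\R^n$ is precisely an $n$-tuple of morphisms into $\R$. Since $\func{F}$ preserves products and fixes the standard $\R$, it must send each standard $\R^n$ to the standard $\R^n$. This settles two of the three kinds of test object immediately: for Fr\"olicher and Smith spaces the only test object is $\R$, already handled, while for Sikorski spaces the test objects are exactly the Euclidean spaces, now shown to be preserved.

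The remaining cases are the test objects of Souriau spaces (open subsets of Euclidean spaces) and of Chen spaces (convex subsets). The common engine is that, having fixed every $\R^n$, the functor $\func{F}$ preserves the hom-sets $\Hom{\acat}{\R^n}{X}$ for all objects $X$; that is, it preserves the plots out of Euclidean spaces. For a Souriau test object $U$ the sheaf condition expresses its diffeology as exactly those maps that locally factor through balls, each of which is diffeomorphic to some $\R^n$; concretely $U$ is the colimit $\varinjlim_{B \subseteq U} B$ over the open balls $B$ with their standard structures. As $\func{F}$ preserves Euclidean spaces and colimits and is set-preserving, it carries $U$ to the standard structure on the same underlying set, which is again a test object. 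Thus for Souriau spaces the class of test objects is reconstructible from data that $\func{F}$ preserves.

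For Chen spaces the same scheme must be run, but here lies the main obstacle. A convex test object such as $[0,1]$ cannot be presented as a colimit of \emph{open} Euclidean pieces reproducing its standard Chen structure; this is precisely the phenomenon recorded earlier by $\SoCht([0,1]) \neq \SoChb([0,1])$, reflecting the ability to approach a boundary point at speed. So the reconstruction of the standard structure from Euclidean plot-data must instead be argued using the determined and sheaf forcing conditions together with the convex test pieces themselves, rather than a naive ball-cover colimit. I would show that the standard Chen structure on a convex set is still intrinsically pinned down by the preserved Euclidean data and the fixed forcing condition, and then use that $\func{F}$ is set-preserving to conclude that the image of a convex test object carries exactly that structure, hence is a test object. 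Once every test object is seen to be intrinsically determined by data $\func{F}$ preserves, it follows that $\func{F}$ maps the class of test objects bijectively onto the class of test objects, as claimed.
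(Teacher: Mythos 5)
Your first step (recovering each standard \(\R^n\) as the \(n\)\hyp{}fold product of the fixed standard \(\R\)) matches the paper, and it does dispose of the Fr\"olicher, Smith, and Sikorski cases outright. But your argument then has a genuine gap, and you have flagged it yourself: the Chen case. Your colimit-of-balls reconstruction works for Souriau test objects, but for a convex test object like \([0,1]\) it provably fails — as you note, \(\SoCht([0,1]) \ne \SoChb([0,1])\) records exactly the fact that no presentation by open Euclidean pieces recovers the standard Chen structure, since the boundary can be approached at speed. At that point your proof reduces to the promissory note ``I would show that the standard Chen structure on a convex set is still intrinsically pinned down by the preserved Euclidean data and the fixed forcing condition,'' with no indication of how. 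Since the corollary is invoked precisely to rule out equivalences involving \(\ccat\) (the very next corollary uses the two Chen structures on \([0,1]\)), the unproved case is the one that carries the weight.

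The paper closes all cases at once, and far more cheaply, with an observation you missed: a set\enhyp{}preserving isomorphism commutes with the underlying\hyp{}set functors, hence preserves \emph{initial} (subspace) structures. The standard structure on any test object — convex or open subset \(S \subseteq \R^n\) — is exactly the subspace structure induced by the inclusion \(S \to \R^n\) with its standard structure; e.g.\ for Chen spaces a map \(\phi \colon C' \to C\) is a plot of the standard structure if and only if \(\iota\phi\) is a plot of \(\R^n\). Once the products \(\R^n\) are fixed, initiality over them fixes the structure on every subset, in particular on every test object. This uniform subspace argument both fills your Chen gap and renders your Souriau colimit detour (and its appeal to the sheaf condition) unnecessary.
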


\begin{proof}
Once \R with its standard structure is fixed then so are the product structures on the spaces \(\R^n\) and the subspace structures on any subsets, in particular on the test objects.
\end{proof}

\begin{corollary}
There are no equivalences between any of the categories of Chen, Souriau, and Fr\"olicher spaces.
Nor are there any between any of the categories of Sikorski, Smith, and Fr\"olicher spaces.
\end{corollary}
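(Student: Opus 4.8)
The plan is to squeeze any hypothetical equivalence, by means of the invariants assembled above, into a functor so rigid that it would force the two categories to be built from identical data — a conclusion contradicted by the distinguishing examples of Section~\ref{sec:diff}.

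First I would reduce to the concrete setting. Each of \(\ccat\), \(\dcat\), \(\fcat\), \(\kcat\), and \(\scat\) is a cocomplete, amnestic, transportable construct which is terminally concrete, and the terminal object is preserved by any equivalence; so the first Proposition of this section shows that any equivalence between two of them is naturally isomorphic to an isomorphism of constructs, i.e.\ a set\enhyp{}preserving functor \(\func{F}\) whose inverse is also set\enhyp{}preserving. For the pairs taken from \(\kcat\) and \(\scat\), whose underlying category is \(\ycat\), I would invoke the topology\enhyp{}classifier Proposition (and its evident extension to isomorphisms between two such categories) in addition, so that \(\func{F}\) may be taken to preserve the underlying \emph{topological} spaces and not merely the underlying sets.

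Next I would feed this rigidified \(\func{F}\) into the two preceding Corollaries: \(\func{F}\) fixes \(\R\) with its standard structure, and hence fixes every test object \(\R^n\) together with its standard structure and the standard subspace structures on the relevant convex or open subsets. The crucial point is then that in each of these categories the structure of an arbitrary object \(X\) is precisely the record of which underlying set\enhyp{}maps out of the test objects, and into the test objects, are morphisms — plots or curves on the one side, functionals on the other. Since \(\func{F}\) is set\enhyp{}preserving and fixes each test object, and since (applying the Corollary also to \(\func{F}^{-1}\)) the standardly\enhyp{}structured subsets in the source and target correspond by the identity on underlying maps, \(\func{F}\) must carry these morphism sets bijectively onto the corresponding ones for \(\func{F}(X)\), identically on underlying set\enhyp{}maps. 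Thus \(X\) and \(\func{F}(X)\) are assembled from exactly the same input and output test families, so \(\func{F}\) establishes on each fixed underlying object a bijection between the \(\acat\)\enhyp{}structures and the \(\bcat\)\enhyp{}structures that assigns to a family of test maps the \emph{same} family; in particular the natural full embedding of Section~\ref{sec:funwild} would have to be essentially surjective.

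Finally I would read off the contradiction from Section~\ref{sec:diff}, where exactly that essential surjectivity is refuted. The two extensions \(\SoCht([0,1])\) and \(\SoChb([0,1])\) of the interval show that distinct structures collapse and that the standard Chen plots on \([0,1]\) have no Souriau counterpart, so the identical\enhyp{}data correspondence between \(\ccat\) and \(\dcat\) cannot exist; the remaining pairs \((\dcat,\fcat)\), \((\ccat,\fcat)\), \((\kcat,\scat)\), \((\scat,\fcat)\), and \((\kcat,\fcat)\) are dispatched by the distinguishing objects recorded there — the non\enhyp{}standard ``wire'' diffeology on \(\R^2\) with the same functionals, the kinked Fr\"olicher structures, the piecewise\enhyp{}smooth Sikorski families, and so on — each exhibiting an object of one category with no counterpart in the other under a set\enhyp{}preserving, test\enhyp{}object\enhyp{}fixing functor. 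Hence no such \(\func{F}\) exists, and therefore none of the original equivalences exist. The step I expect to be the genuine obstacle is the middle one\emhyp{}justifying that fixing the test objects forces the \emph{entire} test data to be transported as identical set\enhyp{}maps, rather than merely the underlying Fr\"olicher structure or the data up to isomorphism. This is where faithfulness over \(\Set\) together with amnesticity must be used with care, and where the mismatch between the convex and open test classes of \(\ccat\) and \(\dcat\) must be reconciled; once that transport is secured, the clash with Section~\ref{sec:diff} is immediate.
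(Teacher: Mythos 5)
Your proposal is correct and follows essentially the same route as the paper: reduce any equivalence to a set\enhyp{}preserving isomorphism via the concreteness proposition, fix \R (and the topology, where relevant) and hence the test objects via the pseudo\enhyp{}\R{} corollaries, transport the test data, and contradict the distinguishing examples of Section~\ref{sec:diff}. The obstacle you flag at the end in fact dissolves: you need not (and for the mismatched convex/open classes cannot) transport the full test data of the ``higher'' category identically, but only the hom\hyp{}sets out of test objects of the \emph{lower} test category, which already gives \(\resfunc(\func{F}(X)) = X\) on the nose and hence forces the restriction functor to be injective on objects\emhyp{}precisely the paper's one\hyp{}line computation\emhyp{}and this is exactly what \(\SoCht([0,1]) \neq \SoChb([0,1])\) and its analogues contradict.
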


\begin{proof}
The argument is the same in all cases so let us take Chen and Souriau as an example.
Any equivalence between these two defines a set\enhyp{}preserving isomorphism which must preserve the test objects.
We therefore find that if \(\func{F} \colon \dcat \to \ccat\) is set\enhyp{}preserving isomorphism then for a \dobj[\dobj] and \tobj[\tobj]
\[
  \itest_{\dobj}(\tobj) = \Hom{\dcat}{\sfunc(\tobj)}{\dobj} = \Hom{\ccat}{\sfunc(\tobj)}{\func{F}(\dobj)}
\]
and thus applying the restriction functor to \(\func{F}(\dobj)\) yields \(\dobj\) again.
However, the restriction functor is not an isomorphism as there are at least two distinct Chen structures on \([0,1]\) which restrict to the same Souriau structure.
Therefore \(\func{F}\) cannot exist.

This strategy works for each pair: we find two objects in the ``higher'' category that restrict to the same object in the ``lower'' one.

For Souriau and Fr\"olicher we take \(\R^2\) with its standard Souriau structure and the ``wire'' structure where all input test functions factor through \R.

For Smith and Fr\"olicher we take \R with its usual topology and all continuous maps, and \R with its discrete topology and all maps.

For Sikorski and Smith we take \R with its usual topology and all continuous maps, and \R with its usual topology and all piecewise\enhyp{}smooth maps.
\end{proof}

There is one final situation to rule out.
Our strategy of focussing on \R works well so long as we restrict ourselves to one ``side'' at a time.
It does not rule out an equivalence between, say, Souriau spaces and Sikorski spaces.
Indeed, swapping ``sides'' like this is a more complicated issue and so we shall take the easy way out by using the fact that in our particular examples we can use the underlying categories to rule out any equivalences.

\begin{proposition}
There are no equivalences between either \ccat or \dcat and \kcat or \scat.
\end{proposition}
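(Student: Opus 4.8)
The plan is to run the same reduction used throughout this section and then to exploit the fact that the standard \R sits in each of these categories in a \emph{dual} way: as a source on the Chen/Souriau side and as a sink on the Sikorski/Smith side. First I would invoke the first Proposition of this section. Each of \ccat, \dcat, \kcat, and \scat is a cocomplete, amnestic, transportable construct which is terminally concrete (amnesticity and transportability come from the recipe, as recorded in Remark~\ref{rk:presmth}\eqref{it:sub}, and terminal concreteness is the fact that the underlying\hyp{}set functor is hom out of a terminal object). Hence any equivalence between a member \acat of \(\{\ccat,\dcat\}\) and a member \bcat of \(\{\kcat,\scat\}\) is automatically an isomorphism of constructs, and it suffices to rule out a set\enhyp{}preserving isomorphism \(\func{F}\colon \acat \to \bcat\). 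Such an \(\func{F}\) preserves underlying sets and hom\hyp{}sets (as sets of functions), and since the endomorphism monoid of an object is a categorical invariant inside the transformation monoid of its underlying set, \(\func{F}\) carries pseudo\enhyp{}\R{}s to pseudo\enhyp{}\R{}s.

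Next I would feed in the directional information from the preceding Proposition. On the \acat side the standard \R is characterised among pseudo\enhyp{}\R{}s by \(\Hom{\acat}{\R_\alpha}{\R}\) being constant for every non\enhyp{}isomorphic pseudo\enhyp{}\R \(\R_\alpha\), while it emits non\enhyp{}constant morphisms (a pseudo\enhyp{}\R is never discrete); it is thus a source, an ``initial'' pseudo\enhyp{}\R. On the \bcat side the roles of input and output are interchanged, so the standard \R there satisfies the mirror property: \(\Hom{\bcat}{\R}{\R_\alpha}\) is constant for non\enhyp{}isomorphic \(\R_\alpha\), making it a sink, a ``final'' pseudo\enhyp{}\R. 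Because \(\func{F}\) is covariant and preserves hom\hyp{}sets and the class of pseudo\enhyp{}\R{}s, it must send a source\enhyp{}type pseudo\enhyp{}\R to a source\enhyp{}type one. The contradiction I am aiming at is that \bcat possesses no source\enhyp{}type pseudo\enhyp{}\R: its standard \R is a sink, and every remaining pseudo\enhyp{}\R lies over the indiscrete Fr\"olicher structure, out of which — by faithfulness of the functor to \fcat and the computation of morphisms out of an indiscrete Fr\"olicher space — only constants emanate. Transporting the standard \R of \acat therefore yields an object of \bcat that is forced to emit non\enhyp{}constant morphisms to other pseudo\enhyp{}\R{}s while being known to emit only constants, which is impossible.

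The step I expect to be the genuine obstacle is making this source/sink dichotomy robust rather than a statement about the standard \R alone, since naive invariants (for instance endomorphism\hyp{}monoid size on a two\hyp{}element set) do \emph{not} separate the set\hyp{}based from the topological categories. Concretely I must (i) verify that \(\func{F}(\R)\) really is a pseudo\enhyp{}\R in the strict sense \bcat demands — in particular that it carries the \emph{standard} topology — which is exactly where the earlier result that set\enhyp{}preserving isomorphisms of a \ycat\enhyp{}based category preserve underlying topologies is required; and (ii) rule out that some non\enhyp{}standard pseudo\enhyp{}\R of \bcat over the indiscrete Fr\"olicher structure could emit non\enhyp{}constant morphisms to every pseudo\enhyp{}\R and so masquerade as a source. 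Both points demand control of \emph{all} pseudo\enhyp{}\R{}s, not merely the standard one, and it is the analysis of the objects lying over the indiscrete Fr\"olicher structure, combined with the directional hom\hyp{}set computations of the preceding Proposition, that carries the real weight. Once the two sides are seen to disagree on the existence of a source\enhyp{}type pseudo\enhyp{}\R, the Proposition follows.
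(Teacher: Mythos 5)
Your strategy has a genuine gap, and it is the one the paper itself flags when it says that swapping ``sides'' is ``a more complicated issue'' before taking ``the easy way out''. The contradiction you aim for is quite possibly vacuous. The source/sink asymmetry you transport along \(\func{F}\) is a statement about how the standard \R relates to \emph{other, non\hyp{}isomorphic} pseudo\enhyp{}\R{}s; but the paper never establishes that any non\hyp{}standard pseudo\enhyp{}\R exists in \emph{any} of these four categories\emhyp{}on the contrary, it proves uniqueness for Fr\"olicher spaces and says ``we strongly suspect that this is true for the other spaces as well''. If, say, \(\scat\) contains only the standard pseudo\enhyp{}\R, then every condition you propagate\emhyp{}\(\Hom{\bcat}{Y}{\func{F}(\R)}\) consisting of constants for pseudo\enhyp{}\R{}s \(Y \not\cong \func{F}(\R)\), and \(\func{F}(\R)\) emitting non\hyp{}constant morphisms to other pseudo\enhyp{}\R{}s\emhyp{}is vacuously satisfied, and \(\func{F}(\R)\) being the standard \R of \(\bcat\) is perfectly consistent. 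The directional hom\hyp{}set computations of the preceding proposition quantify over ``other'' pseudo\enhyp{}\R{}s and deliver nothing when there are none, so no contradiction can be extracted without first exhibiting a second pseudo\enhyp{}\R on at least one side, which neither you nor the paper does. A secondary gap: the topology\hyp{}preservation result you invoke in step (i) is stated for set\enhyp{}preserving isomorphisms of a category built on \(\ycat\); when the domain is \(\ccat\) or \(\dcat\) (built on \(\xcat\)) there is no topology on the source side and that proof (which runs through the four indiscrete structures on \(\{0,1\}\)) simply does not apply, so you cannot conclude that \(\func{F}(\R)\) carries the standard topology and hence is a pseudo\enhyp{}\R in the strict sense required on the \(\kcat\)/\(\scat\) side.

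Your dismissal of two\hyp{}element\hyp{}set invariants is also exactly backwards, because the paper's actual proof is a fibre count over \(\{0,1\}\). The invariant is not the endomorphism monoid but the \emph{cardinality of the fibre} of the underlying\hyp{}set functor: in Smith and Sikorski spaces there are four objects with underlying set \(\{0,1\}\) (indiscrete topology, the two order topologies, and the discrete topology, each with its forced functionals), whereas in Chen, Souriau, and Fr\"olicher spaces there are only two (constant test functions or all test functions). Since the categories are amnestic, a set\enhyp{}preserving isomorphism restricts to a bijection on fibres over each set, and \(2 \ne 4\) finishes the argument in three lines. So the separating invariant you were looking for does live on the two\hyp{}element set; it is the pseudo\enhyp{}\R machinery, not the counting argument, that fails to cross between the set\hyp{}based and topological sides.
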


\begin{proof}
It is sufficient to look for a set\enhyp{}preserving isomorphism.
Any such isomorphism must preserve the number of structures on a given set.
In the categories of Smith and Sikorski spaces there are four objects with underlying set \(\{0,1\}\).
These are:
\begin{enumerate}
\item \(\{0,1\}\) with the indiscrete topology and constant functions,
\item \(\{0,1\}\) with the order topology and constant functions,
\item \(\{0,1\}\) with the reverse order topology and constant functions,
\item \(\{0,1\}\) with the discrete topology and all functions.
\end{enumerate}
The functions in the first three are forced by the fact that all three have only constant continuous functions to \R.
The forcing conditions for both Smith and Sikorski spaces force the functions for the discrete topology to be all functions.

In the categories of Chen, Souriau, and Fr\"olicher spaces there are only two objects with underlying set \(\{0,1\}\).
These are:
\begin{enumerate}
\item \(\{0,1\}\) with only constant (input) functions, and
\item \(\{0,1\}\) with all (input) functions.
\end{enumerate}

Since these categories are amnestic, any set\enhyp{}preserving isomorphism must restrict to a bijection on the fibres above a given set.
Thus there cannot be an equivalence between one of the categories of Smith or Sikorski spaces on the one hand, and one of the categories of Chen, Souriau, or Fr\"olicher spaces on the other.
\end{proof}

In this section we have proceeded on a mixture of general results and  case\enhyp{}by\enhyp{}case analysis.
It is entirely possible that a fuller analysis of the general structure would lead to an elimination of the more ad hoc aspects, but as the goal was to analyse the specific examples of these categories where a simple ad hoc argument sufficed we deemed it more appropriate to give it than to search for what could be a more complicated but more general result.

\section{Topology}
\label{sec:topology}

Two of the main categories under consideration and three of Chen's definitions involve topology directly.
This proves to be somewhat of a nuisance.
If one takes seriously the maxim that anything related to smooth structure must be detectable by test functions, one must make certain assumptions as to the relationship between the topology and the test functions.
In simple language, if we cannot detect the difference between two topologies with test functions then we should not distinguish between them.

This does not mean that topology does not have a r\^ole to play in the theory of smooth structures.
Rather, it means that the topology must depend on the smooth structure and not the other way around.
It is easy to consistently define a topology on a Fr\"olicher space: either take the curvaceous topology or the functional topology.
However, this topology may not be the one first thought of.

By looking at the evolution of Chen's definitions, it is clear that he came to this opinion on the r\^ole of topology and he eventually disposed of it.
Indeed, one can avoid the issue of topology in Chen's definitions by ensuring that it is sufficiently weak that any ``reasonable'' map is continuous.
The only situation where this might cause a problem is in Chen's first definition where he used Hausdorff spaces.
Fortunately he quickly dropped this condition and, eventually, dropped the topology condition altogether.

However, topology is used in a more intricate fashion in Smith spaces and Sikorski spaces.
In both there is the requirement that all the functionals be continuous.
This puts a limit on how weak the topology could be.
If this were the only place that the topology was used then we could remove it again by ensuring that it is sufficiently strong that any ``reasonable'' map is continuous.
However the other use of the topology in each definition puts a limit on how strong the topology can be.
Nonetheless, it is still possible to modify the definitions to remove the topology.

For Smith spaces it is obvious that if one removes the topology from the picture, and thus the requirement that all the maps involved be continuous, then one simply gets Fr\"olicher spaces.
The completeness axiom is, by Boman's result~\cite{jb3}, equivalent to that of Fr\"olicher.

For Sikorski spaces it is less obvious what to do.
The axioms that the functionals should form an algebra and be closed under post\hyp{}composition by smooth functions do not involve the topology and so can be left as they are.
The interaction between the family of functions and the topology comes in in two places: the continuity of the functionals (referred to above) and the fact that the functionals are locally detectable.
Let us consider these.

Let \(X\) be a set and suppose that \(\m{F}\) is a family of functionals on \(X\) for which there exists some topology \(\m{T}\) making \((X, \m{T}, \m{F})\) into a Sikorski space.
Let \(\m{T}_f\) denote the topology on \(X\) induced by the family of functionals, \(\m{F}\).
As the functionals in \(\m{F}\) must be continuous, the topology \(\m{T}\) is at least as fine as \(\m{T}_f\).
If \(f \colon X \to \R\) is \(\m{T}_f\)\enhyp{}locally in \(\m{F}\) then, since \(\m{T}_f \subseteq \m{T}\), it is \(\m{T}\)\enhyp{}locally in \(\m{F}\) and hence, as \((X, \m{T}, \m{F})\) is a Sikorski space, in \(\m{F}\).
Thus \((X, \m{T}_f, \m{F})\) is a Sikorski space.
In conclusion, if \(\m{F}\) is a family of functions on \(X\) for which \emph{some} topology exists making it into a Sikorski space then we can take the topology to be that defined by \(\m{F}\).

We can use this to drop the topology from the definition.
First, the fact that the functionals in \(\m{F}\) are continuous is now a tautology.
This leaves us with locally detectable.

Let \(f \colon X \to \R\) and \(x \in X\) be such that there is a neighbourhood \(V\) of \(x\) and \(g \in \m{F}\) with \(f \restrict_V = g \restrict_V\).
As \(\m{F}\) is an algebra and is closed under post\hyp{}composition by smooth functions, a basis for the functional topology is given by the sets \(h^{-1}(\R \ssetminus \{0\})\) for \(h \in \m{F}\).
Thus there is some \(h \in \m{F}\) such that \(h(x) \ne 0\) and \(h^{-1}(\R \ssetminus \{0\}) \subseteq V\).
As \(h\) is zero outside \(V\), \(h f = h g\) as functions on \(X\).
As \(\m{F}\) is an algebra, this means that \(h f \in \m{F}\).

Conversely, suppose that \(f \colon X \to \R\) and \(x \in X\) are such that there is some \(h \in \m{F}\) with \(h(x) \ne 0\) and \(h f \in \m{F}\).
As \(h(x) \ne 0\) we can find a function \(g \in \m{F}\) such that \(g(y) = h(y)^{-1}\) in a neighbourhood of \(x\).
Then \(g h f \in \m{F}\) and \(g h f\) agrees with \(f\) in a neighbourhood of \(x\).

Thus a suitable non\hyp{}topological replacement for the ``locally detectable'' axiom of a Sikorski space would be: if \(f \colon X \to \R\) is a functional such that for each \(x \in X\) there is some \(f_x \in \m{F}\) with \(f_x(x) \ne 0\) and \(f f_x \in \m{F}\) then \(f \in \m{F}\).

However, note that this condition does not fit into our general picture.
It cannot be tested by diagrams of the form

\begin{centre}
\begin{tikzpicture}[node distance=1.5cm, auto,>=latex', thick]
    \node (lblank) {};
    \node[right of=lblank] (uVtobj) {\(\uVtobj\)};
    \node[right of=uVtobj] (tobj) {\(\tobj\)};
    \node[right of=tobj] (rblank) {};
\draw[->]
                  (uVtobj) edge[dotted] (tobj)
                  (tobj) edge node[diaglabel] {\(\tmor\)} (rblank)
                  (lblank) edge node[diaglabel] {\(\uVtmor\)} (uVtobj);
\end{tikzpicture}
\end{centre}

Rather we have to extend the notion of a trial to include diagrams of the form

\begin{centre}
\begin{tikzpicture}[node distance=1.5cm, auto,>=latex', thick]
    \node (lblank) {};
    \node[right of=lblank] (times) {\(\times\)};
    \node[right of=uVtobj] (times2) {\(\times\)};
    \node[right of=tobj] (rblank) {};
    \node[above of=times, node distance=1em] (uVtobj) {\(\uVtobj\)};
    \node[below of=times, node distance=1em] (uVtobj2) {\(\uVtobj'\)};
    \node[above of=times2, node distance=1em] (tobj) {\(\tobj\)};
    \node[below of=times2, node distance=1em] (tobj2) {\(\uVtobj\)};
\draw[->]
                  (uVtobj) edge[dotted] (tobj)
                  (uVtobj2) edge node[diaglabel,swap] {\(\phi\)} (tobj2)
                  (times2) edge node[diaglabel] {\(\tmor\)} (rblank)
                  (lblank) edge node[diaglabel] {\(\uVtmor\)} (times);
\end{tikzpicture}
\end{centre}

This is clearly doable, but given that the only example that requires it is this of de\hyp{}topologised Sikorski spaces we have elected not to further burden the rest of the paper.

\section{Non\hyp{}Set\enhyp{}Based Theories}
\label{sec:nonset}

Our general recipe does not cover everything that one would wish to treat as a smooth manifold.
The common property of all our categories is that they are fundamentally set\enhyp{}based theories.
However, this approach does not cover other extensions such as synthetic and non\hyp{}commutative differential geometry where objects  do not have underlying sets.
It is, nonetheless, easy to extend part of the general recipe to non\hyp{}set\enhyp{}based theories.
Instead of starting with triples \((\uobj, \itest, \otest)\) one  starts with pairs \((\itest, \otest, \comp)\) where \(\itest\) and \(\otest\) are, respectively, a contravariant and a covariant functor on the test category and \(\comp\) is a natural transformation of bifunctors from \(\itest \times \otest\) to the hom\hyp{}functor on the test category.

Providing the test category is essentially small, these are the objects of a category which is known, in folklore at least, as the \emph{Isbell Envelope} of the test category.
It can also be understood in terms of profunctors.
The two functors \(\itest \colon \tcat \to \xcat\) and \(\otest \colon \tcat \to \xcat\) are profunctors \(\tcat \to \ocat\) and \(\ocat \to \tcat\) respectively.
Their composition, as profunctors, is the product \(\itest \times \otest\) and the natural transformation \(\itest \times \otest \to \Hom{\tcat}{-}{-}\) is a morphism of profunctors.
Thus we obtain what may be termed a \emph{lax factorisation of \(\Hom{\tcat}{-}{-}\) through \(\ocat\)}.

However it is described, the main difficulty is in selecting the appropriate generalisation of the forcing conditions.
With an underlying category, the forcing conditions are answers to the question ``Which, of the \emph{available} morphisms on the underlying objects should be viewed as smooth?''.
Without an underlying category, the forcing conditions need to answer a more subtle question.
What this question should be is something that will become evident with further study of the extant examples of non\hyp{}set\enhyp{}based theories of generalised smooth objects; a study that we defer to another paper.

\bibliography{arxiv,articles,books,misc}

\end{document}